\theoremstyle{plain}
\newtheorem{theorem}{Theorem}[section]
\theoremstyle{plain}
\newtheorem{lemma}[theorem]{Lemma}
\newtheorem{prop}[theorem]{Proposition}
\newtheorem{cor}[theorem]{Corollary}
\newtheorem{ex}{Example}[section]
\theoremstyle{definition}
\newtheorem{definition}{Definition}[section]
\newtheorem{remark}{Remark}[section]
\newtheorem*{maintheorem*}{Main Theorem}
\newtheorem*{maincorollary*}{Main Corollary}
\DeclareFontFamily{U}{BOONDOX-calo}{\skewchar\font=50 }
\DeclareFontShape{U}{BOONDOX-calo}{m}{n}{
	<-> s*[1.05] BOONDOX-r-calo}{}
\DeclareFontShape{U}{BOONDOX-calo}{b}{n}{
	<-> s*[1.05] BOONDOX-b-calo}{}
\DeclareMathAlphabet{\mathcalb}{U}{BOONDOX-calo}{m}{n}
\SetMathAlphabet{\mathcalb}{bold}{U}{BOONDOX-calo}{b}{n}
\DeclareMathAlphabet{\mathbcalb}{U}{BOONDOX-calo}{b}{n}
\newcommand{\R}{\ensuremath{\mathbb{R}}}
\newcommand{\E}{\ensuremath{\mathbb{E}}}
\newcommand{\eps}{\ensuremath{\varepsilon}}
\def\V{V}
\def\Vprime{V^\prime}
\numberwithin{equation}{section} \allowdisplaybreaks
\title[	Ergodicity for	 stochastic	obstacle	problems]
{Ergodicity for stochastic T-monotone	parabolic	obstacle	problems }
\author[Yassine Tahraoui]{ Yassine Tahraoui}
\address{Scuola Normale Superiore, Piazza dei Cavalieri, 7, 56126 Pisa, Italy}
\email{\href{mailto:yassine.tahraoui at sns.it}{yassine.tahraoui at sns.it}}
\date{\today }
\keywords{Stochastic PDEs, Obstacle problem, Reflected measure, Invariant measure, Multiplicative noise,  Markov process.}
\subjclass{35K86, 35R35, 47D07, 60H15}
\thanks{This work is funded by national funds through the FCT - Funda\c c\~ao para a Ci\^encia e a Tecnologia, I.P., under the scope of the projects UIDB/00297/2020 and UIDP/00297/2020 (Center for Mathematics and Applications).
}
\begin{document}

	\begin{abstract}
This work aims to investigate	the 		existence	of		ergodic	invariant	measures	and	its	uniqueness,	associated	with	obstacle	problems	governed	by	a	T-monotone	operator defined	on	Sobolev	spaces	and	driven	by	a			multiplicative	noise	in		a	bounded	domain	of	$\mathbb{R}^d$ with homogeneous boundary conditions.	We	show	that	the	solution	 defines a Markov-Feller 	semigroup defined on  the space of real bounded continuous   functions of 	a convex subset  related to the obstacle and 	we	prove	the	existence	of		ergodic	invariant	measures	and	its	uniqueness,	under suitable
assumptions.		Our method  relies on a combination of 	"Krylov-Bogoliubov theorem", "Krein–Milman theorem" and  Lewy-Stampacchia inequalities to	control	the reflection measure.
  		\end{abstract} 
  	\maketitle


\section{Introduction}
We are interested in		the	existence	of	ergodic	invariant	measures 	and	its	uniqueness,			associated	with	the	solution $(u,k)$  to 	the following obstacle problem
\begin{align}\label{obstacle-pb}
\begin{cases}
&du+[A(u)+\mathbf{F}(u)+ k ]ds= fds+ G(u)dW \quad   \\ 
&  \langle k,u-\psi\rangle_{V^\prime,V}  = 0 \hspace*{0.2cm} \text{and} \hspace*{0.2cm} \langle -k,\varphi\rangle_{V^\prime,V}\geq 0 \quad \forall \varphi \in	V, \varphi\geq 0   \\ 
&u \geq \psi , \hspace*{0.2cm} u(0,\cdot)=u_0 \geq \psi ,
\end{cases}
\end{align}
where  $A$ is a nonlinear T-monotone operator defined on a Banach	space $V$ with values in $V^\prime$, where  $V^\prime$ denotes the  dual	space of $V$, $\mathbf{F}$ is  a Lipschitz Nemytskii operator, $f$	is	an		external	force, $G(\cdot)$ defines a Hilbert–Schmidt operator  and  $(W(t))_{t\geq0}$ is a $Q$-Wiener process with values in  a separable Hilbert space $H$ defined on a filtered probability space $(\Omega,\mathcal{F}, (\mathcal{F}_t)_{t\geq 0}, P).$  The \textit{reflection measure} $k$ prevents the solution $u$ from crossing the obstacle $\psi$. The second line in \eqref{obstacle-pb} (we will refer to it as "minimality condition") says that $k$  is pushing away  $u$ to cross $\psi$ in minimal  sense \textit{i.e.}  the support of $k$ is included in the contact set $\{u=\psi\}$ while $k$ should equals $0$ in the free set $\{u>\psi\}$ and the free boundary $\partial\{u=\psi\}=\partial\{u>\psi\}$ is  part of the unknowns.\\
 
\subsubsection{Motivations and state of the art}
Obstacle  problems 	of the form \eqref{obstacle-pb}   appear in the mathematical modeling of  several  phenomena		 including	Physics, Biology and Finance.	For example, 	 the evolution of damage in a continuous	medium taking into account the microscopic description \cite{Bauz},	the	 American option	pricing,	some	questions	of	flows in porous media, phase transition	and	some	statistical
mechanic	problems,	see \textit{e.g.}	\cite{Otobe-2004,Yas}	and	references	theirin.	It's worth to mention that a formulation by using variational inequalities is largely used in the deterministic setting, we refer to \textit{e.g.} \cite{GMYV,GYV,	Mok-Yas-Val} and their references for the interested reader. \\

Concerning finite dimensional  SDE (stochastic differential equation) with reflection, without exhaustiveness,  we refer to 
  \cite{Skorohod61}, where Skorohod  constructed the solution of  stochastic differential equations on $\mathbb{R}^+$
with reflection at $0$ by using the properties of   the space of real continuous functions. 
Then,  the  existence and uniqueness have been proven  in \cite{Cepa98} for SDEs in the presence of a multivalued maximal monotone operator with normal reflection associated with a closed convex subsets  of $\mathbb{R}^d$. In  \cite{Barbu-Daprato-08-1},  the authors   studied 
 the generator of the transition semigroup of SDEs   with boundary reflection on a closed and convex subset of $\mathbb{R}^d$.
The above works concern a finite-dimensional  SDEs.
In infinte dimensional setting, without	seeking to be exhaustive, let us mention  
\cite{Nulart-Pardoux92} for  the existence and uniqueness result about reflected solutions of the heat equation on the spatial interval $[0,1]$ with Dirichlet boundary conditions, driven by an additive space-time white noise. It is worth mentioning that this can be considered  as an extension  of one-dimensional SDEs reflected at $0$. Their  approach relies on some results about deterministic variational inequalities. Then, existence of     reflected solutions of the heat equation on the spatial interval $[0,1]$ with Dirichlet boundary conditions, driven by multiplicative noise, has been proved in  \cite{DonatiMartinPardoux}. For the formulation as SVI (stochastic variational inequalities) in 1D, see \cite{Hau-Par}. It seems that the 1D restriction is related to  the fact that the expression of the minimality condition requires an appropriate regularity, which is only valid in 1D due to Sobolev embedding theorem. Thus, in a higher dimension, either more in-depth analysis  of the regularity of the solution and of the reflection measure is needed to express the minimality condition, see \textit{e.g.} \cite{NYGA-additive} based on parabolic potential theory or some  duality arguments, see \textit{e.g.} \cite{NYGA,YV22} and their references.\\

  
It is worth mentioning that the formulation by using either the  SVI or  multi-valued SPDEs
  may lead to restrictions  in order to  transform the problem into random SVI or weakening the notion of solution. We prefer to work with the coupled system \eqref{obstacle-pb}, which allows to  use directly the infinite-dimensional stochastic calculus, namely It\^o's formula, as far as we have some control over the reflection measure, 
 thanks to  Lewy-Stampacchia's inequalities, see  \cite{YV22}. In addition, the rigorous formulation of stochastic  obstacle problems as SVI or  differential inclusion of measure is not always true because it can suffer from strong singularities close to the contact set.\\
 

Concerning  stochastic	PDEs with obstacle in different dimensions,  let us refer to 	 \cite{DenisMatoussiZhang} 	 for
 an existence and uniqueness result for quasi-linear stochastic PDEs with obstacle, by using  the parabolic potential theory. 
In \cite{Zhang07-mon-multi}, the author  proved the existence and uniqueness of solutions to multivalued stochastic evolution equations involving maximal monotone operators in some Hilbert space and single valued  strongly monotone operator.  
Besides	the	well	posedness,	a	regularity	properties	of	the	reflected	stochastic	measure		was	studied	in	\cite{YV22},	namely stochastic Lewy-Stampacchia’s inequalities in	the	case	of	stochastic  T-monotone obstacle problems	and	then	\cite{IYG,NYGA}	for	 stochastic scalar conservation laws with constraint	and		pseudomonotone	parabolic	obstacle	problems in the presence of a multiplicative noise.  Recently,   an existence result based on a comparison theorem and parabolic capacity  has been proved in  \cite{NYGA-additive}, about the solutions to  nonlinear, pseudomonotone, stochastic diffusion-convection evolution problem in the  presence of  additive noise on a bounded  domain, with homogeneous boundary conditions and reflection.
Finally, the large deviations for stochastic obstacle problems were studied	\textit{e.g.}	in	\cite{Matou21,Yassine23,Zhang12}.\\

Concerning	the		ergodicity,	a	relation	between	the	"spatial	and	temporal	averages"	for	a	large	class	of	dynamics,			the		existence	of	invariant	measures	is	an	important	factor	to	investigate	the		ergodicity.	Moreover,	it concerns an asymptotic of the dynamic for a large time.		There exists many works about the invariant measures and ergodicity for finite	and	infinite dimensional dynamical systems.	First, let us mention the pioneering work	\cite{Kryloff-Bogoliouboff},	where	the	authors	established	a	procedure	to	construct	an	invariant	measure	for	a	dynamical	system.	Concerning some results about the invariant measures and ergodicity for stochastic evolution	equations, without exhaustiveness, let us refer to	\cite{Daprato-lecture,Daprato,Daprato-ergodicity,Flandoli,Peszat-Zabczyk,Prevot-Rockner,RoK15,Yassine-Fernanda}	and	their	references. We refer also to \cite{Barbu-Daprato-10,Gess} and	their	references on the question of ergodicity for the singular stochastic $p$-Laplace operator in the presence of an additive noise and \cite{Gess2} for stochastic evolution
inclusions in Hilbert spaces.\\

	Concerning	the		invariant	measures	to	stochastic	obstacle	problems,	the	invariant	measure	to	the			heat	equation	with		an	additive	white-noise	in	1D,				with	reflection	at	$0$			were	studied	in	\cite{Otobe-2004,Zambotti}.	In		\cite{Zambotti},		the	existence	of	an	explicit	symmetrizing	invariant	measure	on	$C([0,1])$		were	studied			and	then		the	relation	between			the stationary distribution and
	the Gibbs measure		on	$C(\mathbb{R})$	were	investigated	in	\cite{Otobe-2004}.		In	\cite{Zhang10},	a Harnack inequalities for the semigroup associated	with	the	stochastic	heat	equation		 with reflection	at	$0$	were	proved.
In	\cite{Kalsi-20},	the	author	proved		the	existence of invariant measures
for the	reflected	heat	equation	with	multiplicative	noise on	$[0,1]$. 
One	notices	that	all	these	results	are	in	1D	space	setting.
In an abstract Hilbertian framework,	the	authors		in \cite{Barbu-Daprato-08}	studied		the	well-posedeness	and		the	invariant	measures  of	a stochastic variational inequality on closed convex bounded subsets with nonempty interior and smooth boundary of a Hilbert space  in	the	presence	of	an	additive	noise. We refer also to  \cite{Zhang07-mon-multi} about
the existence and uniqueness of invariant measures associated with the solutions  of  a class of multivalued nonlinear stochastic partial differential equations  involving maximal monotone operators in some Hilbert space and single valued  strongly monotone operators.\\ 

		The	results	concern	the	invariant	measures	for		stochastic	obstacle	problems		are		associated	either		with	the	stochastic	heat	equations	in	1D,		a	linear	operators	in	the	frame	of	Hilbert	spaces	 or strongly monotone operator by using formulation based on  multivalued SPDEs  involving maximal monotone operators in some Hilbert spaces. On the other hand, it is natural to consider \eqref{obstacle-pb} in any space dimension in the presence of non linear differential operators, as a natural extension of the problem studied in \cite{DonatiMartinPardoux,Nulart-Pardoux92}. A typical example we have in mind is  $A(u)=-\Delta_pu=-\text{div}[\vert	\nabla	u\vert^{p-2}\nabla	u], 1<p<+\infty$  in \eqref{obstacle-pb}. The operator  $-\Delta_p$ is a natural generalization of the linear Laplace operator and it has interesting features. In particular, it   degenerates  if 	 $2<p<+\infty$ and it becomes singular when $1<p<2$. To	the	best	of	author's	knowledge, the existing papers do not ensure the existence of an ergodic invariant measure  if  $2<p<+\infty$. Moreover, there doesn’t exist in the literature a result		about  the  existence of invariant measure for stochastic obstacle problems with singular  $\Delta_p$.\\

Our	aim,	in	this	contribution,		is	to	propose	a		results	about				the	existence	and	uniqueness	of	ergodic	invariant	measure	associated	with	an	obstacle	problem	governed	by	a	T-monotone	operator	and	in	the	presence	of	a	multiplicative	noise,	based	on	the	well-posedness	and	Lewy-Stampacchia's	inequalities	proved	in	\cite{YV22}.	First, we check the well-posedness of \eqref{obstacle-pb} in the case $1<p<2$, based on  \cite{YV22}. Then, we study the existence and uniqueness of  ergodic invariant measure associated with the solution.  Our result includes the existence of ergodic invariant measure when  $A(u)=-\Delta_pu; 2\leq p<+\infty$ without any restrictions if $2< p<+\infty$ and the existence of invariant measure if  $A(u)=-\Delta_pu; \max(1,\frac{2d}{d+2})<p<2$.  In particular, it includes
	the	singular	$\Delta_p,1<p<2$	in	1D	and	2D.		 The uniqueness  holds    under some natural assumptions. We	recall	that	\eqref{obstacle-pb}	is	a	free	boundary	type	problem,		where	the	unknowns	are	$u$		and	the	 reflection measure (Lagrange	multiplier)	$k$,	see	Definition \ref{def-1}	for	the precise formulation and  notion of solution. Our arguments rely on a combination of the  unbounded penalization (non-Lipschitz in the case of singular operators \footnote{For example, the case  $A(u)=-\Delta_pu; 1<p<2$.}),  Lewy-Stampacchia inequalities  to control the reflection measure, Krylov-Bogoliubov method to construct an invariant measure and Krein–Milman theorem to show that the set of ergodic invariant measures is not empty.\\
    
        It is worth mentioning that the management of	$k$\footnote{It is worth recalling that $k$ depends non-linearly and implicitly on the solution $u$.},		resulting	from	the	interaction	between	the	solution	and		the	obstacle	near	to	the	contact	set,	 is crucial	 in order to apply infinite dimensional It\^o formula to derive some estimates. In our approach, 
         $k$ is an adapted process with values in $V^\prime$ and not  in a some Hilbert space. We use	 	Lewy-Stampacchia's inequalities	 to control it.  Hence, we can use directly  the classical  infinite dimensional It\^o formula (see \textit{e.g.} \cite{RoK15}) and derive some estimates in the presence of external forces as well. Thus, using 
          the form   \eqref{obstacle-pb} and  Lewy-Stampacchia's inequalities allow to also consider the case of singular operators. On the other hand, we  prove that   the semigroup associated with solution of \eqref{obstacle-pb} is 
	bounded,	 Markov-Feller 	and	stochastically	continuous	in the space of real bounded continuous   functions on 		$K_\psi$\footnote{$K_\psi$ is a convex subset in a Hilbert space related to the obstacle $\psi$.},
see Lemma \ref{Markov-Lemma}.    
     The existence of an invariant measure follows by using the Krylov-Bogoliubov theorem, after showing the tightness of an average measure constructed using the law of the solution of \eqref{obstacle-pb}. Then we show that the set of invariant measures is tight to get the existence of an ergodic invariant measure by using Krein–Milman theorem, where   Lewy-Stampacchia's inequalities is used to control the reflection measure. Finally, the uniqueness of invariant measure is proved under some natural assumptions.

\subsubsection*{Organization	of	the	paper}
The article is organized as follows:			in	Section	\ref{Sec-formulation},	after giving 		the hypotheses,	we	recall		the	well-posedness	result	 about the	stochastic T-monotone	obstacle problems.	Then,	we	present	the	main	results	of	this	work.		Section	\ref{Sec-Proof}		is	devoted	to	the	proof	of	the	main	results	in	the	following	way:	first,	we	recall	the	method	of	construction	of	the	solution	\textit{via}	penalization,	which	serves	to	verify	that	the	semigroup	associated	with	the	solution	of		\eqref{I-invariant----}	defines	Feller-Markov	process.	Secondly,	we prove that the set of invariant measures is not empty and tight if $2\leq p<+\infty$.	Finally,	we	prove	Theorem	\ref{THm2}	and Theorem	\ref{THm2-2}  about	the	uniqueness	of	the	invariant	measure,	under	suitable	assumptions.					Section	\ref{sec-appendix}	concerns	the		well-posedness		of	a	T-monotone	obstacle	problem	in	the	presence	of		zero	order	Lipschitz		mapping	when	$1<p<2$.
\section{Stochastic	obstacle	problems	$\&$	main	results	}\label{Sec-formulation}
\subsection{Notation and  function spaces}
Let us denote by $D \subset \R^d,d\geq1$ a Lipschitz bounded domain, $T>0$ and consider $\underline{\mathbf{p}}:=\max(1,\frac{2d}{d+2}) <p <+\infty$.   
As usual, $p^\prime=\frac{p}{p-1}$ denotes the conjugate exponent of $p$, $V=W^{1,p}_0(D)$, the sub-space of elements of $W^{1,p}(D)$ with null trace, endowed with Poincar\'e's norm,  $H=L^2(D)$ is identified with its dual space so that, the corresponding dual spaces to $V$, $V^\prime$ is $W^{-1,p^\prime}(D)$ and the Lions-Guelfand triple $\V \hookrightarrow_d H=L^2(D)   \hookrightarrow_d \Vprime$ holds. 
Denote by $p^*=\frac{pd}{d-p}$ if $p<d$ the Sobolev embedding exponent and remind that	(see	\textit{e.g}	\cite[Theorem	1.20]{Roubicek}) 
\begin{align*}
\text{ if }p<d,\quad &V \hookrightarrow L^a(D),\ \forall a \in [1,p^*]\text{ and compactly if }a \in [1,p^*),
\\
\text{ if }p=d,\quad & V \hookrightarrow L^a(D),\ \forall a <+\infty\text{ and compactly}, 
\\
\text{ if }p>d,\quad &V \hookrightarrow C(\overline D)\text{ and compactly }.
\end{align*}
Since $\underline{\mathbf{p}}=\max(1,\frac{2d}{d+2}) <p <+\infty$, the compactness of the embeddings hold in Lions-Guelfand triple.	The duality bracket for $T \in V^\prime$ and $v \in V$ is denoted by $\langle T,v\rangle$ and the scalar product in $H$ is denoted by $(\cdot,\cdot)$. 		We recall the existence of 	$C_D>0$		such that
		\begin{align}\label{const-embeeding}
		\Vert	u\Vert_H^2\leq	C_D\Vert	u\Vert_V^2,	\quad	\forall	u\in	V,	\text{	since	}	V\hookrightarrow	H.
		\end{align}

Let $(\Omega,\mathcal{F},P)$ be  a complete probability space  endowed with a  right-continuous filtration $\{\mathcal{F}_t\}_{t\geq 0}$\footnote{For	example,	$(\mathcal{F}_t)_{t\geq 0}$ is the augmentation of the filtration generated by  $\{W(s), 0 \leq s\leq t \}_{0\leq t\leq T}$.} completed with respect to the measure $P$.  $W(t)$ is	a	$\{\mathcal{F}_t\}_{t\geq 0}$-adapted 	$Q$-Wiener process in $H$,	where $Q$	is 	non-negative symmetric   operator  with	finite	trace	\textit{i.e.} $tr Q < \infty$. Denote by $\Omega_T=(0,T)\times \Omega$ and $\mathcal{P}_T$ the predictable $\sigma$-algebra on $\Omega_T$\footnote{$\mathcal{P}_{T}:=\sigma(\{ ]s,t]\times F_s \vert 0\leq s < t \leq T,F_s\in \mathcal{F}_s \} \cup \{\{0\}\times F_0 \vert F_0\in \mathcal{F}_0 \})$ (see \cite[p. 33]{RoK15}). Then, a process defined on $\Omega_T$ with values in a given space $E$ is predictable if it is $\mathcal{P}_{T}$-measurable.}. 
Set $H_0=Q^{1/2}H$ and we recall that $H_0$ is a separable Hilbert space endowed with the inner product $ (u,v)_0=(Q^{-1/2}u,Q^{-1/2}v)$, for any $u,v\in H_0$. The space	$(L_2(H_0,H), \Vert \cdot \Vert_{L_2(H_0,H)})$ stands	for the space of Hilbert-Schmidt operators from $H_0$ to $H$\footnote{	$\Vert T \Vert_{L_2(H_0,H)}^2=\displaystyle\sum_{k\in\mathbb{N}}\Vert	Te_k\Vert_H^2$	where	$	\{e_k\}_{k\in\mathbb{N}}$	is	an	orthonormal basis	for	$H_0$,	see	\textit{e.g.}	\cite[Appendix B]{RoK15}.}	and	$\E$	stands	for	the	expectation	with	respect	to	the	 probability measure	$P$. \\

We recall that an element $\xi\in V'$	(resp.	$L^{p'}(\Omega_T;V')$) is called \textit{non-negative} \textit{i.e.}	$\xi\in ( \Vprime)^+$(resp.	$(L^{p'}(\Omega_T;V'))^+$)	iff 
$	\langle \xi,\varphi\rangle_{V',V}\,\geq 0\quad	(\text{	resp.	}\E\int_0^T\langle \xi,\varphi\rangle_{V',V}\,dt\geq 0)$
holds for all $\varphi\in V$ (resp.	$L^{p}(\Omega_T;V)$)	such that $\varphi\geq 0$. In this case, with a slight abuse of notation, we will often write $\xi\geq 0$.	
Denote by 	$ V^* =( \Vprime)^+ - (\Vprime)^+ \varsubsetneq \Vprime$
the order dual as being the difference of two non-negative elements of $\Vprime$, more precisely $h\in V^*$ iff
$h=h^+-h^-$ with $h^+,h^- \in (\Vprime)^+$.

\subsection{Formulation of the problem}
	Our	aim	is	to	investigate	the	existence	and	uniqueness	of	invariant	measures			associated	with	 the following problem	
    \begin{align}\label{I-invariant----}
\begin{cases}
du+A(u)ds+\mathbf{F}(u)ds+ k ds= fds+ G(u)dW \quad & \text{in} \quad D\times  \Omega_T, \\ 
\langle k,u-\psi\rangle_{V^\prime,V}  = 0 \hspace*{0.2cm} \text{and} \hspace*{0.2cm} -k \in	(V^\prime)^+  & \text{a.e.	in} \quad  \Omega_T,
\\
u \geq \psi & \text{in} \quad  D\times  \Omega_T, \\  
u=  0   &\text{on} \quad\partial D\times \Omega_T,  \\
u(t=0)=u_0 & \text{in} \quad D,
\end{cases}
\end{align}

\subsubsection*{Assumptions}
We will consider in the sequel the following assumptions:
\begin{enumerate}
	\item[H$_1$] :
	Let $A: V \to V^\prime$, \ $G: H \to L_2(H_0,H)$	be		measurable 	such that:
		\item[H$_2$]:  there exist $ \alpha,\bar K >0,   \lambda,\lambda_T,l_1 \in \mathbb{R}$  such that:
	\begin{enumerate}
		\item[H$_{2,1}:$]  (Coercivity)   for all	$  v \in V,\quad \langle A(v),v\rangle+\lambda \Vert v\Vert_H^2+l_1 \geq \alpha \Vert v\Vert_V^p.$
		\item[H$_{2,2}:$]  ($T-$ monotonicity)    for all	$ v_1,v_2 \in V$, 
		\begin{align*}
		\lambda_T(v_1-v_2,(v_1-v_2)^+)_H+\langle A(v_1)-A(v_2),(v_1-v_2)^+\rangle \geq 0.
		\end{align*}
		Note that since $v_1-v_2 = (v_1-v_2)^+ - (v_2-v_1)^+$,  $\lambda_T Id+A$ is also monotone.
		\item[H$_{2,3}:$]   (Boundedness)  for all	$  v \in V,\quad  \Vert A(v)\Vert_{V^\prime} \leq \bar K(\Vert v\Vert_V^{p-1}+1).$

		\item[H$_{2,4}:$]   (Hemi-continuity)    for all $ v,v_1,v_2 \in V$: $ s \in \R \mapsto \langle A(v_1+s v_2),v\rangle$ is continuous.
		
	\end{enumerate}
	%
	\item[H$_3$]:	 $\exists  L_G,	M >0$  such that:\\  for all $ \theta,\sigma \in H$:   $\Vert G(\theta)-G(\sigma)\Vert_{L_2(H_0,H)}^2 \leq L_G \Vert \theta-\sigma\Vert_H^2$	and	   $ \Vert G(0)\Vert_{L_2(H_0,H)}^2 \leq M.$

	\item[H$_4$] :	The	obstacle $\psi\in V$	such	that	$G(\psi)=0$.
	\item[H$_5$] : 	The	external	force $f\in V^\prime$ such  that 
	$
	 f  - A(\psi)=h   \in V^*.$
		
	\item[H$_6$] : The	initial	datum	 $u_0 \in H$,	and	 satisfies the constraint, \textit{i.e.} $u_0 \geq \psi$.
\item[H$_7$] : Let 
     $\mathbf{F}: \mathbb{R} \to \mathbb{R}$, 	 there exist $L_F,K >0$ such that
	\begin{align}\label{H-F}
	 \text{  for all	}  x,y \in \mathbb{R}:   \vert \mathbf{F}(x)-\mathbf{F}(y)\vert \leq L_F \vert x-y\vert.	
	\end{align}
\end{enumerate}
Let us make some comments on H$_7$, which plays a role only if $\underline{\mathbf{p}}<p<	2$.
\begin{remark}\label{Rmq-perrurbation-Lips}
	Let	$v,v_1,v_2	\in	H$	and	note	that
	\begin{align*}
	\vert		( \mathbf{F}(v_1)-\mathbf{F}(v_2),(v_1-v_2)^+)_H\vert&=\vert\int_D[\mathbf{F}(v_1)-\mathbf{F}(v_2)].(v_1-v_2)1_{\{v_1-v_2\geq	0\}}dx\vert\\	&\leq	L_F\int_D\vert	v_1-v_2\vert^21_{\{v_1-v_2\geq	0\}}dx=L_F\Vert	(v_1-v_2)^+\Vert_H^2,
	\end{align*}
	and
	$\vert	(\mathbf{F}(v),v)_H\vert	\leq	(L_F+\dfrac{1}{2})\Vert	v\Vert_H^2+\dfrac{1}{2}\Vert	F(0)\Vert_H^2\leq	(L_F+\dfrac{1}{2})\Vert	v\Vert_H^2+\dfrac{CK}{2}$.	Furthermore,
	$\Vert	\mathbf{F}(v)\Vert_H\leq	L_F\Vert	v\Vert_H+CK.
	$
	Denote	by	$\widehat{A}(u)=A(u)+\mathbf{F}(u)$,		then	$\widehat{A}$	satisfies	$H_1,H_{2,1}-H_{2,4}$	if	$p\geq2$.	In	the	case	$1<p<2,$		$H_{2,3}$	is	not	satisfied.
	Finally,	thanks	to	$H_4$	one	has
	$\mathbf{F}(\psi)\in	H$.	Thus,	by	using	H$_5$,	we	get
	\begin{align}\label{new-decompsition}
	f  - A(\psi)-\mathbf{F}(\psi)=[h^++(\mathbf{F}(\psi))^-]-[h^-+ (\mathbf{F}(\psi))^+]=\widetilde{h}^+-\widetilde{h}^- =\widetilde{h} \in V^*.
	\end{align}
\end{remark}
Notice	that				\textit{e.g.}	the	perturbation	of	$A=-\Delta_p$;	$1<p<2$,		by	a	zero	order	Lipschitz	mapping	is	not	covered		by	the	well-posedness	result	\cite[Theorem	1]{YV22},	since	$H_{2,3}$	is	not	satisfied. On the other hand, 
the incorporation of zero order Lipschitz mapping in the operator part is important to study the invariant measures if	$\underline{\mathbf{p}}<p<	2$.		Therefore,	we	need	to	check	the	well-posedness	of	\eqref{I-invariant----},	which	is	given	in	Theorem	\ref{Thm-perturbation}.	Its	proof	follows	by	a	cosmetic	changes	of	the	one		\cite[Theorem	1]{YV22}	but for the convenience of the reader, we present the	proof	in		Section	\ref{sec-appendix}.	We	invite	the	interested	reader	by	more	general	situations	about	obstacle	problems	in	the	presence	of	a	multiplicative	noise	to	consult	the	recent	work	\cite{NYGA},	where	well-posedness	and	Lewy-Stampacchia's	inequality	were	proved	by	using	stochastic	compactness	tools.
\begin{remark}
Considering the different notions of operators, it is useful to make some comments on the relationship between T-monotone, maximal monotone, and accretive operators.
Let $ v_1,v_2 \in V,$ recall that $v_1-v_2=(v_1-v_2)^+-(v_2-v_1)^+$. Hence, it is not difficult to verify that H$_{2,2}$ implies that $A$ is  monotone, \textit{i.e.} 
  for all	$ v_1,v_2 \in V$, 
		\begin{align*}
		\lambda_T\Vert v_1-v_2\Vert_H^2+\langle A(v_1)-A(v_2),v_1-v_2\rangle \geq 0.
		\end{align*}
 Since $A$ is also hemicontinuous, then $A$ is maximal monotone in $V\times V^\prime$, see  \cite[Theorem 2.4]{BARBU}.  It is worth mentioning that the notion of T-monotone operator was introduced in \cite{Brezis-Stampacchia} to study elliptic inequalities and the regularity of its solutions, see also \cite{Mosco}. This is somewhat related to the fact that when solving variational inequalities or obstacle problems, the solution lies in a convex subset and not the full Banach space and one needs to handle non ordered Banach space. Moreover, note also that the monotonicity property is defined from $V$ to its dual space and it is a variational property, whereas  the the notion of accretivity is  a metric geometric property defined 
 for an operator $A$ from   a Banach space $X$ to  itself, see \textit{e.g.} \cite{BARBU} for more details.
\end{remark}

\begin{remark}
	Note	that	$G(\psi)=0$	in	
	$H_4$	ensures		that	Lewy-Stampacchia	inequalities	\eqref{LS}	holds,	which	allows	us	to	manage	the	reflection	measure	in	appropriate	sense	and	obtain		well- posedness	result	for	obstacle	problems with larger class of data,	we	refer	to	\cite{YV22}.	Moreover,	a	large	class	of	$V$-valued	obstacle 	problems	  can reduce to
the question of a positivity obstacle problem with a stochastic reaction term vanishing	at $0$,			see	\cite[Rmk.	3]{YV22}.		Finally,	it is  appropriate	in many situations as	well, from a physical point of view,	to	consider	$G$	vanishes	at	the	obstacle,	see	\textit{e.g.}	\cite{Bauz}.
\end{remark}

\subsection{Well-posedness	of		\eqref{I-invariant----}}
Consider	the	following	problem:
	find	$(u,k)$,	in the 	sense of Definition \ref{def-1},	solution	of	\eqref{I-invariant----}.	Denote by $K$ the convex  set of admissible solutions
	$$K=\{v \in L^p(\Omega_T;V),  \quad v(x,t,\omega)\geq \psi(x) \quad \text{a.e. in } D\times\Omega_T\}.$$
	\begin{definition}\label{def-1}
		The pair $(u,k)$	 is a solution to Problem \eqref{I-invariant----} if:
		\begin{enumerate}
			\item  $u:\Omega_T\to H$ and $k:\Omega_T \to V^\prime$ are predictable	processes. 
			\item  	 $u \in L^2(\Omega;C([0,T];H))$,	$u\in L^p(\Omega_T;V)$, $u(0,\cdot)=u_0$ and $u\geq \psi$, \textit{i.e.}, $u\in K$.
			\item P-a.s, for all $t\in[0,T]$: 
						\begin{align*}
			u(t)+\displaystyle\int_0^t [A(u)+\mathbf{F}(u)+k]ds=u_0+\displaystyle\int_0^t G(u)dW(s)+\displaystyle\int_0^t fds	\text{	in	}	V^\prime.\end{align*}			
			\item $- k\in (L^{p^\prime}(\Omega_T,V^\prime))^+$ \ and\quad $ \forall v \in K$,  $\langle k,u-v\rangle  \geq 0$ \ a.e. in $\Omega_T$.
				\end{enumerate}
	\end{definition}
	\begin{remark}\label{Rmq-def}	The		point	(4)	in	Definition	\ref{def-1}	can	be	replaced	by
		$k \in L^{p^\prime}(\Omega_T; V^\prime)$	 with 
		\begin{align}\label{230530_01}
		- k\in (L^{p^\prime}(\Omega_T;V^\prime))^+ \ &\text{and} \ \E{\int_0^T\langle k,u-\psi\rangle_{V',V}\,dt}=0
				\end{align}
		Condition \eqref{230530_01} can be understood as a minimality condition on $k$	 in the sense that $k$	 vanishes on the set $\{u>\psi\}$. Moreover, \eqref{230530_01} implies that, for	all	$	v \in K$,  
		$\E{\int_0^T\langle k,u-v\rangle\,dt}  \geq 0.$	
			\end{remark}
	Let us present the   following	result about the solution to \eqref{I-invariant----}. We distinguish between the cases $2\leq p<+\infty$ and  $\underline{\mathbf{p}}<p<	2$.
		By using \cite[Theorem	1]{YV22}, we have	the	following	result.
	\begin{theorem}\label{TH1}($2\leq p<	+\infty$) Under the assumptions H$_1$-H$_7$, there exists  a unique solution $(u,k)$ to  \eqref{I-invariant----} in the sense of Definition \ref{def-1}. Moreover,  the following  Lewy-Stampacchia  inequality holds
		\begin{align}\label{LS}
		0\leq  \partial_t \Big( u-\displaystyle\int_0^\cdot G(u)dW\Big)+ A(u)-f=-k \leq h^-=  \left(f   - A(\psi)\right) ^-.
		\end{align}
	\end{theorem}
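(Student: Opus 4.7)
The plan is to use the penalization method, which is standard for obstacle problems in the variational SPDE framework. For $\epsilon>0$, introduce the unconstrained auxiliary SPDE
\begin{align*}
du_\epsilon+A(u_\epsilon)\,ds+k_\epsilon\,ds=f\,ds+G(u_\epsilon)\,dW,\qquad k_\epsilon:=-\tfrac{1}{\epsilon}(u_\epsilon-\psi)^{-},
\end{align*}
with $u_\epsilon(0)=u_0$. The map $v\mapsto -(v-\psi)^{-}$ from $H$ to $H$ is monotone and Lipschitz, so the modified drift $A+\tfrac{1}{\epsilon}(\cdot-\psi)^{-}$ still satisfies H$_1$--H$_{2,4}$ with suitably shifted constants. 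Together with H$_3$, the classical variational theory (Krylov--Rozovskii/Pardoux) yields a unique predictable solution $u_\epsilon\in L^{2}(\Omega;C([0,T];H))\cap L^{p}(\Omega_T;V)$, and Itô's formula applied to $\Vert u_\epsilon\Vert_H^2$ furnishes uniform-in-$\epsilon$ bounds in these spaces.

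The key technical input is the Lewy--Stampacchia estimate $0\leq -k_\epsilon\leq h^-$ in $V'$, uniform in $\epsilon$. The lower bound is built into the definition of $k_\epsilon$; the upper bound is obtained by a comparison argument. Testing the equation for $u_\epsilon-\psi$ against a smooth convex regularization of $(\cdot)^{-}$, the condition $G(\psi)=0$ from H$_4$ cancels every stochastic contribution, while the decomposition $f-A(\psi)=h\in V^{\ast}$ from H$_5$ supplies the deterministic upper barrier $h^-$ for $-k_\epsilon$. Consequently $k_\epsilon$ is bounded in $L^{p'}(\Omega_T;V')$ and, by coercivity H$_{2,1}$, the negative part $(u_\epsilon-\psi)^{-}\to 0$ strongly, forcing the limit to satisfy $u\geq\psi$.

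The main obstacle is the limit passage in the nonlinear operator $A(u_\epsilon)$. Extract weak limits $u_\epsilon\rightharpoonup u$ in $L^p(\Omega_T;V)$, $A(u_\epsilon)\rightharpoonup \chi$ in $L^{p'}(\Omega_T;V')$, and $k_\epsilon\rightharpoonup k$, and transfer the pointwise Lewy--Stampacchia bound to $k$ using the weak closedness of the positive cone. Identifying $\chi=A(u)$ proceeds by the Minty/monotonicity trick in the stochastic setting: applying Itô to $\Vert u_\epsilon\Vert_H^2$, exploiting the fact that $\langle k_\epsilon,u_\epsilon-\psi\rangle=-\tfrac{1}{\epsilon}\Vert(u_\epsilon-\psi)^{-}\Vert_H^2\leq 0$ lets one discard the penalty contribution from the top, and combined with the weak lower semicontinuity of $\Vert\cdot\Vert_H$ at $t=T$ this yields the limsup inequality $\limsup_\epsilon\E\int_0^T\langle A(u_\epsilon),u_\epsilon\rangle\,dt\leq \E\int_0^T\langle \chi,u\rangle\,dt$ needed to invoke monotonicity and conclude $\chi=A(u)$. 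The minimality condition $\E\int_0^T\langle k,u-\psi\rangle\,dt=0$ follows from the same computation.

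Finally, uniqueness is obtained by applying Itô to $\Vert(u_1-u_2)^{+}\Vert_H^2$ for two solutions $(u_1,k_1)$, $(u_2,k_2)$: H$_{2,2}$ controls the $A$-term, H$_3$ controls the diffusion through the Hilbert--Schmidt norm, and the sign relation $\langle k_1-k_2,(u_1-u_2)^{+}\rangle \geq 0$ a.e. in $\Omega_T$ holds because each $-k_i$ is non-negative and vanishes on $\{u_i>\psi\}$. Grönwall's lemma then gives $u_1=u_2$, hence $k_1=k_2$ in $V'$. The Lewy--Stampacchia inequality \eqref{LS} for the limit solution is then inherited directly from the uniform bound on $k_\epsilon$.
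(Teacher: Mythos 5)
Your overall strategy --- penalize, derive uniform estimates, control the penalization term via $G(\psi)=0$ and the decomposition $f-A(\psi)=h\in V^{*}$, pass to the limit by Minty's trick, and conclude uniqueness by It\^o plus Gr\"onwall --- is exactly the route taken here (the paper defers the proof to \cite[Theorem 1]{YV22} and recalls these steps at the start of Section \ref{Sec-Proof}, carrying out the analogous argument in full for \eqref{I-invariant----} in the appendix). However, two of your intermediate claims fail as stated. First, the linear penalization $k_\epsilon=-\frac{1}{\epsilon}(u_\epsilon-\psi)^-$ does \emph{not} preserve the boundedness assumption H$_{2,3}$ when $\max(1,\frac{2d}{d+2})<p<2$: one only gets $\Vert (v-\psi)^-\Vert_{V'}\lesssim \Vert v\Vert_H+\Vert\psi\Vert_H\lesssim\Vert v\Vert_V+\Vert\psi\Vert_H$, and $\Vert v\Vert_V$ is not dominated by $\Vert v\Vert_V^{p-1}+1$ when $p-1<1$, so the classical variational existence theorem you invoke for $u_\epsilon$ is not applicable in that range (this is precisely the obstruction recorded in Remark \ref{Rmq-2cases}(3)). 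The remedy used here is to penalize with $-\frac{1}{\epsilon}[(u_\epsilon-\psi)^-]^{\tilde q-1}$, $\tilde q=\min(p,2)$, as in \eqref{1-penalization}, which restores the $\Vert v\Vert_V^{p-1}$ growth. Second, the noise in the penalized problem must be truncated to $\widetilde G(u_\epsilon)=G(\max(u_\epsilon,\psi))$. Your claim that ``$G(\psi)=0$ cancels every stochastic contribution'' when testing with a convex regularization $F_\delta$ of the negative part is not correct for the raw coefficient $G(u_\epsilon)$: the It\^o correction $\int_0^t Tr(F_\delta''(u_\epsilon-\psi)G(u_\epsilon)QG(u_\epsilon)^*)\,ds$ is concentrated on $\{-\delta\le u_\epsilon-\psi\le 0\}$, and $G$ being Lipschitz only as a map $H\to L_2(H_0,H)$ gives no pointwise smallness of $G(u_\epsilon)$ there; with $\widetilde G$ one has $\widetilde G(u_\epsilon)=G(\psi)=0$ identically on the contact set and the term genuinely vanishes.

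Two further points are glossed over. The testing argument does not yield the pointwise barrier $0\le -k_\epsilon\le h^-$ uniformly in $\epsilon$; what it yields is a uniform bound on $\frac{1}{\epsilon}[(u_\epsilon-\psi)^-]^{\tilde q-1}$ in $L^{\tilde q'}(\Omega_T\times D)$, and this only under the \emph{additional} regularity $h^-\in L^{\tilde q'}(D)$; the inequality \eqref{LS} is then established for the limit $k$. The case of a general $h\in V^{*}$, as permitted by H$_5$, requires a second approximation layer ($h_n\to h^-$ in $V'$ with $h_n\in L^{\tilde q'}$, together with the stability estimate \eqref{cv-2}); without it your argument does not cover the full hypothesis. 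Your uniqueness argument via $\Vert(u_1-u_2)^+\Vert_H^2$ is a legitimate variant of the one used here (the needed sign follows from $0\le\langle -k_1,(u_1-u_2)^+\rangle\le\langle -k_1,u_1-\psi\rangle=0$ together with $\langle -k_2,(u_1-u_2)^+\rangle\ge 0$), though it requires the same regularization $F_\delta$ to justify the It\^o formula for the positive part.
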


The	next	theorem	concerns	the	well-posedness	of	\eqref{I-invariant----}	if	  $1<p<2$,	see	Section	\ref{sec-appendix}.

\begin{theorem}\label{Thm-perturbation}($\underline{\mathbf{p}}<p<	2$)
Assume $H_1-H_7$	 hold and	assume	moreover	that	$\widetilde{h}^-$	is a  non negative element of $ L^{p^\prime}(D)$,		see	\eqref{new-decompsition}.	Then, there exists a unique 
	solution $(\widetilde{u},\widetilde{k})$ to  \eqref{I-invariant----} in the sense of Definition \ref{def-1}.
	Moreover,	$- \widetilde{k}\in 	((L^{p^\prime}(\Omega_T;L^{p^\prime}(D))\big)^+$  
  and the	following		holds
	\begin{align}\label{cv-pen-sol-1}
	\E\displaystyle\sup_{t\in[0,T]}\Vert (\widetilde{u}_\epsilon-\widetilde{u})(t)\Vert_H^2\leq   C\epsilon^{\frac{1}{p-1}}	\quad	\text{	for	all	}	 \epsilon>0,
	\end{align}
	where	$\widetilde{u}_\epsilon$	is	the	unique	solution	of	\eqref{1}.
    	 \end{theorem}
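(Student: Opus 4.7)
The strategy is to follow closely the penalization scheme of \cite[Theorem~1]{YV22} while treating the zero-order Lipschitz perturbation $\mathbf{F}$ separately from the principal $T$-monotone part $A$; this separation is what circumvents the failure of $H_{2,3}$ for $\widehat{A}=A+\mathbf{F}$ in the regime $1<p<2$ noted in Remark~\ref{Rmq-perrurbation-Lips}. Concretely, I would first introduce the penalized SPDE
\begin{equation*}
d\widetilde{u}_\epsilon + A(\widetilde{u}_\epsilon)\,ds + \mathbf{F}(\widetilde{u}_\epsilon)\,ds - \tfrac{1}{\epsilon}(\widetilde{u}_\epsilon-\psi)^-\,ds = f\,ds + G(\widetilde{u}_\epsilon)\,dW,
\end{equation*}
with $\widetilde{u}_\epsilon(0)=u_0$ and homogeneous Dirichlet boundary data. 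Well-posedness of this equation follows from the classical monotone variational framework \cite{Prevot-Rockner,RoK15}: the principal operator $A$ still satisfies $H_1$--$H_{2,4}$ (including the growth bound $H_{2,3}$), whereas $\mathbf{F}$ and $v\mapsto -\tfrac{1}{\epsilon}(v-\psi)^-$ are, up to a constant shift, monotone and globally Lipschitz from $H$ to $H$, so they are absorbed into the Itô energy estimate without disturbing the $p$-polynomial growth structure required on $A$.

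Next I would derive the uniform Lewy-Stampacchia bound
\begin{equation*}
0 \leq \tfrac{1}{\epsilon}(\widetilde{u}_\epsilon-\psi)^- \leq \widetilde{h}^- \quad \text{a.s., a.e. in } D\times\Omega_T,
\end{equation*}
by adapting the one-sided comparison argument of \cite{YV22} with the decomposition \eqref{new-decompsition} playing the role of $f-A(\psi)=h^+-h^-$. Writing $\rho_\epsilon=-\tfrac{1}{\epsilon}(\widetilde{u}_\epsilon-\psi)^-$, this yields a uniform bound of $\rho_\epsilon$ in $L^{p'}(\Omega_T;L^{p'}(D))$; combined with Itô's formula applied to $\Vert\widetilde{u}_\epsilon\Vert_H^2$ (using the coercivity $H_{2,1}$ and the Lipschitz estimates on $\mathbf{F},G$ gathered in Remark~\ref{Rmq-perrurbation-Lips}), one obtains uniform bounds for $\widetilde{u}_\epsilon$ in $L^2(\Omega;C([0,T];H))\cap L^p(\Omega_T;V)$.

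I would then pass to the limit $\epsilon\to 0$: weak limits $\widetilde{u}_\epsilon\rightharpoonup\widetilde{u}$, $\rho_\epsilon\rightharpoonup\rho$, $A(\widetilde{u}_\epsilon)\rightharpoonup\chi$ exist in the appropriate spaces. Strong convergence $\widetilde{u}_\epsilon\to\widetilde{u}$ in $L^2(\Omega_T;H)$ follows from an Aubin-Lions-type argument via the compact embedding $V\hookrightarrow H$, which allows the identification $\mathbf{F}(\widetilde{u}_\epsilon)\to \mathbf{F}(\widetilde{u})$ by Lipschitzness and the convergence of the stochastic integral. Minty's trick exploiting $H_{2,2}$ identifies $\chi=A(\widetilde{u})$; the pointwise bound $(\widetilde{u}_\epsilon-\psi)^-\leq \epsilon\widetilde{h}^-$ forces $\widetilde{u}\geq\psi$; and the minimality condition \eqref{230530_01} comes from testing the penalized equation against $\widetilde{u}_\epsilon-\psi$ and passing to the limit. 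Uniqueness is obtained by applying Itô's formula to $\Vert\widetilde{u}^1-\widetilde{u}^2\Vert_H^2$ for two solutions: $\langle \rho^1-\rho^2,\widetilde{u}^1-\widetilde{u}^2\rangle\geq 0$ by complementarity, $T$-monotonicity of $A$ provides the dissipation, and the Lipschitz contributions from $\mathbf{F}$ and $G$ are closed off by Gronwall.

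Finally, for the rate \eqref{cv-pen-sol-1}, I would apply Itô's formula to $\Vert\widetilde{u}_\epsilon-\widetilde{u}\Vert_H^2$. A short computation using $\rho\leq 0$ and the complementarity $\langle\rho,\widetilde{u}-\psi\rangle=0$ gives
\begin{equation*}
\langle \rho_\epsilon-\rho,\widetilde{u}_\epsilon-\widetilde{u}\rangle \geq \tfrac{1}{\epsilon}\Vert(\widetilde{u}_\epsilon-\psi)^-\Vert_H^2 + \langle \rho,(\widetilde{u}_\epsilon-\psi)^-\rangle,
\end{equation*}
where the first term is non-negative and the second, via the pointwise bounds $|\rho|\leq \widetilde{h}^-$ and $(\widetilde{u}_\epsilon-\psi)^-\leq \epsilon\widetilde{h}^-$, is of order $\epsilon$ in $L^1(\Omega_T)$. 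Combining this with monotonicity of $A$, the Lipschitz bounds on $\mathbf{F}$ and $G$, Burkholder-Davis-Gundy and Gronwall's inequality gives $\E\sup_{[0,T]}\Vert\widetilde{u}_\epsilon-\widetilde{u}\Vert_H^2\leq C\epsilon^{1/(p-1)}$, where the sharp exponent $1/(p-1)$ emerges from balancing the $p$-coercivity of $A$ against the dual bound on the reflection via a Young inequality with conjugate exponents $p,p'$. This last step is the most delicate: extracting the exponent $1/(p-1)$ rather than the crude rate $\epsilon$ requires calibrating Young's inequality carefully against the scaling of the $p$-modulus of monotonicity, as in \cite[Theorem~1]{YV22}.
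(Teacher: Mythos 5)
Your overall penalization strategy is the right one, but three concrete points diverge from what is actually needed, and two of them are genuine gaps. First, you penalize with the \emph{linear} term $\tfrac{1}{\epsilon}(\widetilde{u}_\epsilon-\psi)^-$, whereas the approximating problem \eqref{1} (to which the estimate \eqref{cv-pen-sol-1} explicitly refers) uses $\tfrac{1}{\epsilon}[(\widetilde{u}_\epsilon-\psi)^-]^{p-1}$. For $1<p<2$ this exponent is not cosmetic: it is what makes the penalization operator satisfy the $p$-growth bound $H_{2,3}$ (so that only the Lipschitz term $\mathbf{F}$ needs the extended framework of \cite[Th.~5.1.3]{RoK15} with $\beta=p^\prime$), it is what allows the duality pairing of the penalization against $\widetilde{h}^-\in L^{p^\prime}(D)$ to produce the bound $\tfrac{1}{\epsilon}\Vert(\widetilde{u}_\epsilon-\psi)^-\Vert_{L^p}^{p-1}\leq C$, and it is precisely where the exponent $\tfrac{1}{p-1}$ in \eqref{cv-pen-sol-1} comes from (via H\"older with exponents $p,p^\prime$ applied to the cross terms $\tfrac{1}{\delta}\langle[(u_\delta-\psi)^-]^{p-1},(u_\epsilon-\psi)^-\rangle$ in the Cauchy estimate). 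With the linear penalization your scheme approximates a different sequence and would at best give the weaker rate $\epsilon$, so the stated inequality is not proved. Second, your rate argument rests on the pointwise bounds $0\leq\tfrac{1}{\epsilon}(\widetilde{u}_\epsilon-\psi)^-\leq\widetilde{h}^-$ and $(\widetilde{u}_\epsilon-\psi)^-\leq\epsilon\widetilde{h}^-$ a.e.\ in $D\times\Omega_T$. No such pointwise Lewy--Stampacchia inequality is available at the penalized level (the paper's Lemma \ref{lem2} only yields the integral bounds $\Vert\tfrac{1}{\epsilon}[(\widetilde{u}_\epsilon-\psi)^-]^{p-1}\Vert_{L^{p^\prime}(\Omega_T\times D)}\leq C$ and $\Vert(\widetilde{u}_\epsilon-\psi)^-\Vert_{L^{p}(\Omega_T\times D)}\leq (C\epsilon)^{1/(p-1)}$, obtained by testing with the regularization $F_\delta^\prime(\widetilde{u}_\epsilon-\psi)$), and you give no argument for it; this step would fail as written.

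Two further remarks. You take the un-truncated noise $G(\widetilde{u}_\epsilon)$ in the penalized equation; the paper uses $\widetilde{G}(\widetilde{u}_\epsilon)=G(\max(\widetilde{u}_\epsilon,\psi))$, and this truncation is essential in the proof of the $L^{p^\prime}$ bound on the penalization, because it forces the second-order It\^o term $\mathrm{Tr}(F_\delta^{\prime\prime}(\widetilde{u}_\epsilon-\psi)\widetilde{G}(\widetilde{u}_\epsilon)Q\widetilde{G}(\widetilde{u}_\epsilon)^*)$ to vanish on the contact set $\{\widetilde{u}_\epsilon\leq\psi\}$ where $F_\delta^{\prime\prime}\neq 0$; without it you acquire an uncontrolled term. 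Finally, the Aubin--Lions compactness you invoke for the strong convergence is both delicate in the stochastic setting (it would require tightness/Skorokhod arguments not set up here) and unnecessary: the Cauchy property of $(\widetilde{u}_\epsilon)$ in $L^2(\Omega;C([0,T],H))$, which follows from the T-monotonicity of $A$, the Lipschitz bounds on $\mathbf{F}$ and $\widetilde{G}$, and the penalization estimates, gives the strong convergence \emph{and} the quantitative rate in one stroke, which is how \eqref{cv-pen-sol-1} is actually obtained (let $\delta\to 0$ in the Cauchy estimate).
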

         \begin{remark} 
             We wish to draw the reader’s attention to the fact that the extra regularity of   $\widetilde{h}^-$ if $\underline{\mathbf{p}}<p<	2$ , namely	 $ \widetilde{h}^-\in (L^{p^\prime}(D))^+$  can be removed and consider   $ \widetilde{h}^-\in (V^\prime)^+$ and prove the well-posedness of  \eqref{1}. First, one  proves Lewy-Stampacchia inequality with   $ \widetilde{h}^-\in (L^{p^\prime}(D))^+$ by repeating the same arguments of \cite[Subsection 3.2]{YV22}. Then,  use Lewy-Stampacchia inequality to extend the well-posedness to  the case  $ \widetilde{h}^-\in (V^\prime)^+$, see \cite[Subsection 3.3]{YV22}. Therefore, the results can  be extended to the case of $ \widetilde{h}^-\in (V^\prime)^+$ by similar arguments.
         \end{remark}
	\begin{remark}\label{Rmq-distinguish}
	Notice that 	the unique  solution to \eqref{I-invariant----} in the cases $2\leq p<+\infty$ and  $\underline{\mathbf{p}}<p<	2$ satisfies the same properties of Definition \ref{def-1}.  Therefore, we do not distinguish between them  in the rest of the paper unless it is necessary to understand the argument.\end{remark}

	\subsection{Main	results}In this subsection, we collect the results of this work.
	First,	let	us	recall	the	following:		
		$L^2(H,\mu)$	denotes	the	Hilbert	space	of	all	equivalance	classes	of	Borel	square	integrable	real	functions	on	$H$	with	respect	to	the	Gaussian	measure	$\mu$,	$B_b(H)$	denotes	 the set of bounded Borel functions from $H$ to $\mathbb{R}$	and		denote	by	$\mathcal{C}_b(H)$	the	set	of	bounded continuous function	from	$H$	to	$\mathbb{R}$.	We	recall	that	$$		(\varphi,\Psi)_{L^2(H,\mu)}=\int_H\varphi\Psi	d\mu,	\quad	\varphi,\Psi\in	L^2(H,\mu),$$	we	refer	\textit{e.g.}	 to \cite[Chapter	9]{Daprato-lecture}	for	more	details.	
	\begin{definition}
		Let	$(P_t)_t$	be	a	Markov	semigroup	on	$H$.
	\begin{itemize}
		\item	A	probability	measure	$\nu$	is	said	to	be	invariant	for	$P_t$	if:	$$\int_HP_t\varphi	d\nu=\int_H\varphi	d\nu,	\quad	\forall	\varphi	\in	B_b(H)	\text{	and	}	t\geq	0.	$$
		\item	Let	$\nu$	be	an	invariant	measure	for	$P_t$.
        \begin{itemize}
            \item[i.]   $\nu$	is	ergodic	if:		$$	\displaystyle\lim_{T\to	+\infty}\dfrac{1}{T}\int_0^TP_t\varphi	dt=\int_H\varphi(x)	\nu(dx),\quad	\forall	\varphi\in	L^2(H,\nu).$$
	\item[ii.]	$P_t$	is	strongly	mixing	if:	
		$$	\displaystyle\lim_{t\to	+\infty}	P_t\varphi(x)=\int_H\varphi(x)	\nu(dx)	 \text{ in }  L^2(H,\nu)	\text{ where }	x\in	H.$$
        \end{itemize}
       	
	\end{itemize}
	\end{definition}



Now, denote	by	$K_\psi=\{	h\in	H,\quad	h\geq	\psi\}$	and	note	that	$K_\psi$	is	a		closed convex set	in	$H$.	
        Denote by $\pi_\psi$ the projection from $H$ onto the closed convex set $K_\psi$ \textit{i.e.} $\pi_\psi:H\to K_\psi$. It is known that $\pi_\psi$ is $1$-Lipschitz continuous. We denote by $\mathcal{C}_b(K_\psi)$ the space of all $\varphi:K_\psi \to \mathbb{R}$
 being bounded and uniformly continuous with respect to $\Vert \cdot\Vert_H$. Thus, we can identify $\mathcal{C}_b(K_\psi)$ with a subspace of $\mathcal{C}_b(H)$ by using the embedding $\mathcal{C}_b(K_\psi)\to \mathcal{C}_b(H); \varphi \mapsto \varphi\circ \pi_\psi.$\\
        
For 	$t\geq	0$,	we	define	the	following	Markov	Feller	semigroup	$(P_t)_{t\geq		0}$	(see	Lemma		\ref{Markov-Lemma}).
\begin{align}\label{transition-prob}
P_t\varphi:K_\psi	\to	\mathbb{R};\quad
(P_t\varphi)(\eta):=\E[\varphi(u(t;\eta))],	\quad	\eta\in	K_\psi	\quad	\forall	\varphi	\in	\mathcal{C}_b(K_\psi).
\end{align}
where	$u(t;\eta),	t\geq	0$	be	the	unique	strong	solution	to		\eqref{I-invariant----}		starting	from	$\eta	\in	K_\psi$. 
		Let us state our main results,	we distinguish between the  cases:	$2 \leq p<\infty$	and	$\underline{\mathbf{p}}<p<2$. Before that, let us introduce the following assumption
        \begin{align}\label{bound-noise-p=2}
            \text{There exists 	}  \mathbf{K} > 0:  \forall \sigma \in H:   	    \Vert G(\sigma)\Vert_{L_2(H_0,H)}^2 \leq \mathbf{K}.
        \end{align}
       
          The following result concerns a moment estimates of the set of invariant measures. Denote by $\Lambda$ the set of invariant measures  for	the	semigroup	$(P_t)_t$	defined	by	\eqref{transition-prob}, see propositions \ref{ergo-1-prop} and \ref{prop-ergodic-0} for the proof.
    \begin{prop}\label{prop-ergodic-full}
    Assume that	$H_1$-$H_7$ hold.
If $2\leq p<+\infty$,	then	 there exists $\widetilde{\mathbf{K}}>0$ such that $
\int_H\Vert	x\Vert_V^p\mu(dx)	\leq \widetilde{\mathbf{K}}, \forall \mu \in \Lambda.$
          If $\underline{\mathbf{p}} < p<2$, then	 there exists  $\bar{\mathbf{K}}>0$ such that 
  $\int_H\Vert	x\Vert_H^p\mu(dx)	\leq \bar{\mathbf{K}}, \forall \mu \in \Lambda.	
$
\end{prop}
	Now, we present the results about the existence and uniqueness of invariant measures.
        \begin{theorem}\label{THm1}(case	$2\leq p<\infty$)	Assume that	$H_1$-$H_6$ hold and $\mathbf{F}\equiv 0$ in $H_7$\footnote{In this case, $\mathbf{F}\equiv 0$ is not a restriction, since one needs only to change $\lambda$ by $\lambda+L_F$. }.
    \begin{enumerate}
        \item Let $2<p<+\infty,$    there exists	an	 ergodic	invariant	measure	$\mu$	for	$(P_t)_t$	associated	with	\eqref{I-invariant----}.
		Moreover,	$\mu$	is	concentrated	in	$V\cap	K_\psi$	and		satisfying	
	$\int_H\Vert	x\Vert_V^p\mu(dx)	<	+\infty.
	$
    \item  Let	$K>0$ and 	$p=2$	and			assuming
		that	there	exists	$	\bar{\alpha}	>0$		such	that		\begin{align}\label{condition-invariant}
		(1-\delta)\alpha-C_D[\lambda+\dfrac{L_G(1+K^2)}{2K^2}]^+\geq	\bar{\alpha}	\text{	for	some	}\delta\in]0,1[,	\end{align}
		then,	there	exists	an	 ergodic	invariant	measure	$\mu$	for	$(P_t)_t$	associated	with	\eqref{I-invariant----}, 	concentrated	in	$V\cap	K_\psi$	and		satisfying	
$	\int_H\Vert	x\Vert_V^2\mu(dx)	<	+\infty.
$	 \item Let $p=2$, the same result holds if one replaces 
		\eqref{condition-invariant} by 	 $\lambda\leq	0 \text{ in }	H_{2,1}$ and \eqref{bound-noise-p=2}.
        
    \end{enumerate}
 	\end{theorem}
  	\begin{theorem}\label{THm2}
	Under	the	assumptions	of	Theorem	\ref{THm1}.	
    \begin{enumerate}
        \item If   $\dfrac{L_G}{2}+\lambda_T<0$  in the case (1) and (2).  	Then	there	exists	a	unique	ergodic	and	strongly mixing	invariant	measure	$\mu$	and	the	following convergence to equilibrium holds.
	\begin{align}\label{cv-equilibruim}
	\Big	\vert	P_t\varphi(x)-\int_H\varphi(y)\mu(dy)\Big\vert	\leq	\mathcal{G}\Vert	\varphi\Vert_{1,\infty}	e^{[\frac{L_G}{2}+\lambda_T]t},\quad	\forall	\varphi\in	\mathcal{C}_b^1(H).
	\end{align}
\item    If       $\lambda_T<0$  in the case (3). Then
Then	there	exists	a	unique	ergodic	and	strongly mixing	invariant	measure	$\mu$	and	the	following convergence to equilibrium holds.
	\begin{align}\label{cv-equilibruim-2-case}
	\Big	\vert	P_t\varphi(x)-\int_H\varphi(y)\mu(dy)\Big\vert\leq	\Vert	\varphi\Vert_{1,\infty}	(\mathcal{G}+\sqrt{\mathbf{K}t}) e^{\lambda_T t} \quad	\forall	\varphi\in	\mathcal{C}_b^1(H),
	\end{align}
    \end{enumerate}
    where $\mathcal{G}$ depends only on $\widetilde{\mathbf{K}}$ and $ \bar{\mathbf{K}}.$
\end{theorem}
Concerning the case $\underline{\mathbf{p}}<p<2$, we have
\begin{theorem}\label{Thm1-bis}
	Let	$\underline{\mathbf{p}}<p<2$, under the  assumptions	$H_1$-$H_7$,	let	$K>0$,	the	following	holds.
	\begin{enumerate}	
		\item  Under	the	assumptions		$\frac{L_G(1+K^2)}{2K^2}+\lambda+L_F\leq 0.	$		There	exists	an invariant	measure	$\mu$	for	$(P_t)_t$	associated	with	\eqref{I-invariant----}.	

				\item	Assume that	$\lambda+L_F\leq	0	$
			and $G$ satisfies \eqref{bound-noise-p=2}.
			Then	there	exists	an		invariant	measure	$\mu$	for	$(P_t)_t$	associated	with	\eqref{I-invariant----}.
	\end{enumerate}
	Moreover,	$\mu$	is	concentrated	in	$V\cap	K_\psi$	and		satisfying	
$\int_H\Vert	x\Vert_V^p\mu(dx)	<	+\infty.
$	

\end{theorem}
\begin{theorem}\label{THm2-2}
	Under	the	assumptions	 Theorem	\ref{Thm1-bis}.	  \begin{enumerate}
        \item If   $\dfrac{L_G}{2}+\lambda_T+L_F<0$  in the case (1).  	Then	there	exists	a	unique	ergodic	and	strongly mixing	invariant	measure	$\mu$	and	the	following convergence to equilibrium holds.
	\begin{align}\label{cv-equilibruim-00}
	\Big	\vert	P_t\varphi(x)-\int_H\varphi(y)\mu(dy)\Big\vert	\leq	\mathcal{G}\Vert	\varphi\Vert_{1,\infty}	e^{[\frac{L_G}{2}+\lambda_T+L_F]t},\quad	\forall	\varphi\in	\mathcal{C}_b^1(H).
	\end{align}
\item    If       $\lambda_T+L_F<0$  in the case (2). Then
Then	there	exists	a	unique	ergodic	and	strongly mixing	invariant	measure	$\mu$	and	the	following convergence to equilibrium holds.
	\begin{align}\label{cv-equilibruim-2-case-0}
	\Big	\vert	P_t\varphi(x)-\int_H\varphi(y)\mu(dy)\Big\vert\leq	\Vert	\varphi\Vert_{1,\infty}	(\mathcal{G}+\sqrt{\mathbf{K}t}) e^{\lambda_T t} \quad	\forall	\varphi\in	\mathcal{C}_b^1(H),
	\end{align}
    \end{enumerate}
    where $\mathcal{G}$ depends only on $\widetilde{\mathbf{K}}$ and $ \bar{\mathbf{K}}.$
	
\end{theorem}
\begin{remark}\label{Rmq-2cases}
Since we are	interested	in	the	existence	of	invariant	measures	to	a	Markov	semigroup	associated	with		non	linear	SPDEs		in	the	presence	of	a	multiplicative	noise,	where	the	solution	takes	values	in	the	Sobolev	space	$V$,		one	 expects	a restriction on	the	Lipschitz	constant	$L_G$,	see	$H_{3}$	to	obtain	the	existence	of	invariant	measures.	We		distinguish	two	cases.
	  If	the	dissipation	generated	by	the	T-monotone	operator	is strong enough,	then	the	restriction	on	the	Lipschitz	constant	$L_G$	is	not	needed,	see	Theorem	\ref{THm1}$_{(1)}$.	Otherwise,		a	restriction	on		$L_G$		is	needed,	see	\eqref{condition-invariant} and Theorem	\ref{Thm1-bis}. 	
	
  
		
\end{remark}
		
	Consider	the	following	example.	
\begin{ex}
	Let	$\underline{\mathbf{p}}<p<\infty$,	$T>0$,	and		$(u,-k) \in (L^2(\Omega;C([0,T];H))\cap	L^p(\Omega_T;V))\times(L^{p^\prime}(\Omega_T,V^\prime))^+$	such	that	$u\geq	0$,
	be	the	unique	solution		of	the	following	problem
	\begin{align}\label{example1}
	\begin{cases}
	du-\text{div}[\vert	\nabla	u\vert^{p-2}\nabla	u]ds+\kappa	uds+ k ds= fds+ \mathbf{c}	ud\beta_s \quad & \text{in} \quad D\times  \Omega_T, \\ 
	u(\cdot,0)=u_0 \in	H,\quad		u_0 \geq 0;	\quad	f(x)=\sin(x)& \text{in} \quad D,\\ 
	\langle k,u\rangle_{V^\prime,V}  = 0 \hspace*{0.2cm} \text{and} \hspace*{0.2cm} -k \in	(V^\prime)^+  & \text{a.e.	in} \quad  \Omega_T,
	\end{cases}
	\end{align}
	where	$\kappa,\mathbf{c}	\in	\mathbb{R}$,	$\beta_t$	is a	one	dimensional brownian motion.
	Indeed, we can verify that	$A(u)=-\text{div}[\vert	\nabla	u\vert^{p-2}\nabla	u]$	satisfies	$H_1,H_{2,1},	H_{2,3}$	and	$H_{2,4}$	by	repeating those used in		\cite[Example	4.1.9]{Prevot-Rockner}.	We need only to verify	$H_{2,2}$,	the	T-monotonicity.	Denote	by	$1_{\{u>v\}}$	the	 indicator function	of	the	set	$\{x\in	D;		u(x)\geq	v(x)	\}$.
	Let	$u,v\in	V$	and	note	that,	for	any	$p>1$,	we	have
	\begin{align*}
	&	\langle	A(u)-A(v),(u-v)^+\rangle=-\langle	\text{div}[\vert	\nabla	u\vert^{p-2}\nabla	u]-\text{div}[\vert	\nabla	v\vert^{p-2}\nabla	v],(u-v)^+\rangle\\
		&=\int_D[\vert	\nabla	u(x)\vert^{p-2}\nabla	u(x)-\vert	\nabla	v(x)\vert^{p-2}\nabla	v(x)]\cdot	\nabla(u(x)-v(x))1_{\{u>v\}}(x)dx\\
		&\geq		\int_D(\vert	\nabla	u(x)\vert^{p-1}-\vert	\nabla	v(x)\vert^{p-1})\cdot	(\vert	\nabla	u(x)\vert-\vert	\nabla	v(x)\vert)1_{\{u>v\}}(x)dx\\
		&\geq	0,	\text{	since	}	\mathbb{R}_+\ni	s\mapsto	s^{p-1}	\text{	is	an	increasing	function.	}
	\end{align*}
	Hence	$H_{2,2}$	is	satisfied.
	An	application	of	Theorems	\ref{THm1},	\ref{THm2}, \ref{Thm1-bis}	 and \ref{THm2-2} yields
	\begin{itemize}
		\item	The	solution	$u$	of	\eqref{example1}		defines a	homogeneous
		Feller–Markov process.	
		\item	If	$p>2$:	there	exists	an	ergodic	invariant	measure	$\mu$	for	$(P_t)_t$	associated	with	\eqref{example1},		for	any	$\kappa,\mathbf{c}	\in	\mathbb{R}$.
		If	moreover	$\dfrac{	\mathbf{c}^2}{2}	<	\kappa$,	there	exists	a	unique	ergodic	and	strongly	mixing	invariant	measure	$\mu$	for	$(P_t)_t$	associated	with	\eqref{example1}.
		\item	If	$p=2$	and	$2\mathbf{c}^2	\leq	\kappa$:	there	exists	a	unique	ergodic	and	strongly	mixing	invariant	measure	$\mu$	for	$(P_t)_t$	associated	with	\eqref{example1}.
		
		\item	If		$	\underline{\mathbf{p}}<p<2$	and	
		$2\mathbf{c}^2<	\kappa:$		there	exists	a	unique	ergodic	and	strongly	mixing	invariant	measure	$\mu$	for	$(P_t)_t$	associated	with	\eqref{example1}.
	\end{itemize}	
\end{ex}

\begin{remark}\label{rmq-singular-plaplace}
	It's	worth	to	draw	the	reader	attention	to	the	following:	in	1D		and	2D	cases,	Theorem	\ref{Thm1-bis}	ensures	the	existence	of	an	invariant	measure	for	a	stochastic	obstacle	problems	governed	by	the	singular	$p$-Laplace	operator,	when	$1<p<2.$
\end{remark}
\section{Proofs}\label{Sec-Proof}
\subsection{Construction	of	solution	to	\eqref{I-invariant----}}
Let	us	recall	here	the	main	steps	to	construct	the	solution	to	\eqref{I-invariant----}. We	refer	to	\cite{YV22}	for	the	detailed	proofs, see also Section  \ref{sec-appendix}.
\begin{enumerate}
	\item		Let $\epsilon>0$ and 		$u_0\geq	\psi$,
	consider the following  penalized problem:
	\begin{align}\label{1-penalization}
	u_\epsilon(t)+\displaystyle\int_0^t(A(u_\epsilon)-\dfrac{1}{\epsilon}[(u_\epsilon-\psi)^-]^{\tilde{q}-1}-f)ds=u_0+\displaystyle\int_0^t\widetilde{G}(u_\epsilon)dW(s) 
	\end{align}
	where $\tilde{q}=\min(p,2)$ and $\widetilde{G}(u_\epsilon,\cdot)=G(\max(u_\epsilon,\psi))$. By \cite[Thm. 4.2.4]{RoK15},	we	get
	\begin{lemma}\label{lemma-exis-u-epsilo,}
 For all $\epsilon>0$, there exists  a unique solution $u_\epsilon \in L^p(\Omega_T;V)$ predictable such that $u_\epsilon \in L^2(\Omega;C([0,T],H))$  and satisfying \eqref{1-penalization} P-a.s. in $\Omega$ for all $t\in[0,T]$. 
	\end{lemma}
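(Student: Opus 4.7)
The plan is to apply the variational SPDE well-posedness theorem \cite[Thm.~4.2.4]{RoK15} for locally monotone coefficients. To match its framework, I bundle the whole drift of \eqref{1-penalization} into a single operator
\[
A_\epsilon:V\to V^\prime,\qquad A_\epsilon(v):=A(v)-\tfrac{1}{\epsilon}[(v-\psi)^-]^{\tilde q-1}-f,
\]
and view the diffusion as $\widetilde G:H\to L_2(H_0,H)$, $v\mapsto G(\max(v,\psi))$. For each fixed $\epsilon>0$ it then suffices to verify the four assumptions of \cite[Thm.~4.2.4]{RoK15}, namely hemicontinuity, weak monotonicity, coercivity and polynomial growth for $A_\epsilon$, together with the Lipschitz property of $\widetilde G$.

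Hemicontinuity of $A_\epsilon$ follows from $H_{2,4}$ and the pointwise continuity of $s\mapsto[s^-]^{\tilde q-1}$; with $\tilde q-1\in[0,1]$ this last term is amenable to dominated convergence. For weak monotonicity I use three ingredients: (i) $H_{2,2}$ together with the decomposition $v_1-v_2=(v_1-v_2)^+-(v_2-v_1)^+$ yields that $\lambda_T\,\mathrm{Id}+A$ is monotone on $V$, as already noted after $H_{2,2}$; (ii) since $s\mapsto-[(s-\psi)^-]^{\tilde q-1}$ is pointwise nondecreasing, the penalization itself is monotone, contributing nonnegatively to $\langle A_\epsilon(u)-A_\epsilon(v),u-v\rangle$; (iii) $\widetilde G$ inherits the Lipschitz constant $L_G$ from $H_3$ via the fact that $\max(\cdot,\psi)$ is $1$-Lipschitz on $H$, and $\|\widetilde G(0)\|_{L_2(H_0,H)}\leq L_G^{1/2}\|\psi\|_H$ thanks to $H_4$. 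Together these produce the standard quasi-dissipativity estimate required by the framework, with constant $2\lambda_T+L_G$.

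For coercivity, starting from $H_{2,1}$ I expand the penalization term via the pointwise identity $v=\psi+(v-\psi)^+-(v-\psi)^-$ on disjoint supports:
\[
-\tfrac{1}{\epsilon}\int_D[(v-\psi)^-]^{\tilde q-1}v\,dx=\tfrac{1}{\epsilon}\int_D[(v-\psi)^-]^{\tilde q}\,dx-\tfrac{1}{\epsilon}\int_D[(v-\psi)^-]^{\tilde q-1}\psi\,dx;
\]
the first summand is nonnegative and dropped, the second is absorbed by Young's inequality using $\psi\in V\hookrightarrow L^{\tilde q}(D)$, the embedding being guaranteed by $\max(1,\tfrac{2d}{d+2})<p$. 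The $-\langle f,v\rangle$ contribution is handled by standard duality, and $H_{2,1}$ then gives $\langle A_\epsilon(v),v\rangle\geq\tilde\alpha\|v\|_V^p-c_\epsilon(1+\|v\|_H^2)$. For polynomial growth, $H_{2,3}$ bounds $\|A(v)\|_{V^\prime}$ by $\|v\|_V^{p-1}+1$, while $\tilde q-1\leq 1$ combined with $H\hookrightarrow V^\prime$ makes the penalization of subdominant order in $V^\prime$ for fixed $\epsilon$, and $f\in V^\prime$ is just a constant.

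The main obstacle I anticipate is the bookkeeping in the coercivity step, where the two regimes $p\geq 2$ (so $\tilde q=2$) and $1<p<2$ (so $\tilde q=p$) must be tracked separately so that the $V$-coercivity of exponent $p$ survives the Young absorption on the penalization tail; the lower bound $p>\tfrac{2d}{d+2}$ enters precisely here through the $V\hookrightarrow L^{\tilde q}(D)$ embedding. Once the four conditions are checked, \cite[Thm.~4.2.4]{RoK15} directly produces the unique predictable process $u_\epsilon\in L^p(\Omega_T;V)\cap L^2(\Omega;C([0,T],H))$ solving \eqref{1-penalization} $P$-a.s. in $\Omega$ for all $t\in[0,T]$.
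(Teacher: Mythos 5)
Your proposal is correct and follows essentially the same route as the paper: the paper proves this lemma simply by invoking \cite[Thm.~4.2.4]{RoK15} (deferring the verification of its hypotheses to \cite{YV22}), and the verification you spell out — bundling the drift into $A_\epsilon$, using the monotonicity of $x\mapsto -(x^-)^{\tilde q-1}$ for weak monotonicity, testing the penalization against $\psi$ for coercivity, and the $1$-Lipschitz property of $\max(\cdot,\psi)$ for $\widetilde G$ — is exactly the argument carried out in the paper's appendix for the analogous Lemma \ref{lemma-exis-u-epsilo,2}. Only cosmetic quibbles: the locally monotone refinement is not needed here (the plain monotone framework of Thm.~4.2.4 suffices, as your own checks show), and for $\tilde q=p<2$ the penalization lands in $L^{p'}(D)\hookrightarrow V'$ rather than via $H\hookrightarrow V'$.
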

\item	(A	priori	estimates)
	According	to \cite[Lemma 2]{YV22},  we  have
			 $(u_\epsilon)_{\epsilon>0}$ 	and	$(A(u_\epsilon))_{\epsilon>0}$	are bounded in  $ L^p(\Omega_T,V)\cap C([0,T],L^2(\Omega,H))$	and	$L^{p^\prime}(\Omega_T,V^\prime)$	respectively. 
	
	\item	(Convergence	with	regular	data) From \cite[Lemma 3	$\&$	Lemma	4]{YV22},	under	the	assumption  $h^-$  non negative element of $ L^{\tilde{q}^\prime}(D)$,	we	have
		\begin{itemize}
		\item
			$(\dfrac{1}{\epsilon}[(u_\epsilon-\psi)^-]^{\tilde{q}-1})_{\epsilon>0}$ is bounded in $L^{\tilde{q}^\prime}(\Omega_T\times D).$
			\item	$(u_\epsilon)_{\epsilon>0}$ is  a Cauchy sequence in the space $L^2(\Omega;C([0,T],H))$.	Moreover
			\begin{align}\label{cv1}
			\E\displaystyle\sup_{t\in[0,T]}\Vert (u_\epsilon-u_\delta)(t)\Vert_H^2 \leq	C(T)\epsilon	\quad	\text{	for	} 1>\epsilon\geq \delta >0.
						\end{align}
						
			\item	From	\cite[Lemma	5	$\&$	Theorem	2]{YV22},	there exists  a unique predictable stochastic process $(u,k)\in L^p(\Omega_T;V) \times L^{\tilde{q}^\prime}(\Omega_T;L^{\tilde{q}^\prime}( D))$	and		$(u_\epsilon)_{\epsilon>0}$	converges	strongly	in	$L^2(\Omega,C([0,T],H))$	to	$u$,	where	$u$	satisfying	
			 			\begin{itemize}
				\item[i.] $u \in  L^2(\Omega;C([0,T],H)) \cap K$, $ u(0)=u_0$.
				\item[ii.] $k\leq 0$ and $\forall v \in K$,  $ \langle k,u-v\rangle  \geq 0$ a.e. in $\Omega_T$. 
				\item[iii.] P-a.s, for all $t\in[0,T]$:  
				$$u(t)+\displaystyle\int_0^t [A(u)+k]ds=u_0+\displaystyle\int_0^t G(u)dW(s)+\displaystyle\int_0^t fds.$$ 
				\item[iv.]The   Lewy-Stampacchia inequality  $0\leq  -k \leq h^-=  \left(f   - A(\psi)\right) ^-$	holds.

			\end{itemize}
		\end{itemize}
			\item	(Convergence	with	general	data)	Following	\cite[Subsection	3.3]{YV22},		under	the	assumption	$h^-	\in	(V^\prime)^+$,	there exists $h_n \in L^{\tilde{q}^\prime}(\Omega_T; L^{\tilde{q}^\prime}(D))$ predictable  and non negative such that
			$ h_n \longrightarrow h^- \quad \text{in} \quad L^{p^\prime}(\Omega_T;V^\prime).$	Denote by $(u_n,k_n)$ the sequence of solutions given by (2) where $h^-$ is replaced by $h_n$	and	recall	that
			 \begin{itemize}
			 	\item	 Associated with $h_n$, denote the following $f_n$ by,
			 	$
			 	f_n=A(\psi)+h^+-h_n,  h^+\in (V^\prime)^+ . 
			 	$
			 	Note that $f_n \in V^\prime$ and $f_n$ converges strongly to $f$ in $ V^\prime$.
			 	
			 	\item	By Lewy-Stampacchia inequality, one has $0\leq -k_n\leq h_n.$
			 	\item		 $(u_n)_{n}$ is  a Cauchy sequence in the space $L^2(\Omega;C([0,T],H))$	and	
			 	\begin{align}\label{cv-2}
			 		 \E\displaystyle\sup_{t\in[0,T]}\Vert (u_n-u_m)(t)\Vert_H^2\leq	C\Vert f_n-f_m\Vert_{L^{p^\prime}(\Omega_T;V^\prime)} \leq  C\epsilon	\text{	for	large	}	 n \text{	and	} m.			 	\end{align}
			 	\item	$(u_n)_{n>0}$	converges	strongly	in	$L^2(\Omega;C([0,T],H))$	to	$u$,	solution	of	to	\eqref{I-invariant----} with $\mathbf{F}\equiv 0$	in	the	sense	of	Definition	\ref{def-1}.	
			 				  				 \end{itemize}

\end{enumerate}
From	\eqref{cv1}	and	\eqref{cv-2},	we	are	able	to	infer
\begin{align}\label{cv-pen-sol}
\E\displaystyle\sup_{t\in[0,T]}\Vert (u_\epsilon-u)(t)\Vert_H^2\leq   C\epsilon	\quad	\text{	for	all	}	 \epsilon>0,
\end{align}	
where	$u_\epsilon$	and	$u$,		respectively	are	the		unique	solution	of	\eqref{1-penalization}		and		\eqref{I-invariant----} with $\mathbf{F}\equiv 0$,	respectively.	From	Theorem	\ref{TH1perturbation},	we	have
	\begin{align}\label{cv-pen-sol--}
\E\displaystyle\sup_{t\in[0,T]}\Vert (\widetilde{u}_\epsilon-\widetilde{u})(t)\Vert_H^2\leq   C\epsilon^{\frac{1}{p-1}}	\quad	\text{	for	all	}	 \epsilon>0,
\end{align}
where	$\widetilde{u}_\epsilon$	and	$\widetilde{u}$,		respectively	are	the		unique	solution	of	\eqref{1}		and		\eqref{I-invariant----},	respectively.

\subsection{Feller-Markov process	associated		with	\eqref{I-invariant----}}
Let	$\epsilon>0$	and		$u_\epsilon(t,s;\xi),	t\geq	s\geq	0$	be	the	unique	strong	solution	to	\eqref{1-penalization}	starting at time $s$ from  $\mathcal{F}_{s}$-measurable initial data	$\xi	\in	H$.	Let $T > 0$,	we	have	
\begin{align}\label{Feller-penalization}
	 \E\Vert u_\epsilon(t,s;\xi)-u_\epsilon(t,s;\theta)\Vert_H^2 \leq  e^{[	L_G+2\lambda_T]	(t-s)}\E\Vert \xi-\theta\Vert_{H}^2	\quad	\forall	0\leq	s\leq	t\leq	T.
\end{align}
	Indeed,	by	using	\eqref{1-penalization}	and	applying Ito's formula	we	get
		\begin{align*}
	&\dfrac{1}{2}\Vert u_\epsilon(t,s;\xi)-u_\epsilon(t,s;\theta)\Vert_H^2-\dfrac{1}{2}\Vert \xi-\theta\Vert_{H}^2\\
	&+\displaystyle\int_s^t \langle A(u_\epsilon(r,s;\xi))-A(u_\epsilon(r,s;\theta)),u_\epsilon(r,s;\xi)-u_\epsilon(r,s;\theta)\rangle dr\\
	&+\displaystyle\int_s^t\langle  -\dfrac{1}{\epsilon}[(u_\epsilon(r,s;\xi)-\psi)^-]^{\tilde{q}-1}+\dfrac{1}{\epsilon}[(u_\epsilon(r,s;\theta)-\psi)^-]^{\tilde{q}-1}, u_\epsilon(r,s;\xi)-u_\epsilon(r,s;\theta) \rangle dr\\
	&= \displaystyle\int_s^t\langle (\widetilde{G}(u_\epsilon(r,s;\xi))-\widetilde{G}(u_\epsilon(r,s;\theta))), u_\epsilon(r,s;\xi)-u_\epsilon(r,s;\theta)\rangle	dW(r)\\
	&\quad +\dfrac{1}{2}\displaystyle\int_s^t\Vert \widetilde{G}(u_\epsilon(r,s;\xi))-\widetilde{G}(u_\epsilon(r,s;\theta))\Vert_{L_2(H_0,H)}^2dr.
	\end{align*}
	By	taking	the	expectation,	using	that	$x\mapsto	-(x)^{\tilde{q}-1}$		is non-decreasing,	$H_{2,2}$	and		$H_{3,1}$		we	get
		\begin{align*}
	&\E\Vert u_\epsilon(t,s;\xi)-u_\epsilon(t,s;\theta)\Vert_H^2 \leq\E\Vert \xi-\theta\Vert_{H}^2+(L_G+2\lambda_T)\E\displaystyle\int_s^t\Vert u_\epsilon(r,s;\xi)-u_\epsilon(r,s;\theta)\Vert_H^2dr,
	\end{align*}
	and	 \eqref{Feller-penalization}	follows from Gr\"onwall's lemma.
	\begin{remark}($\underline{\mathbf{p}}<p<2$)\label{Rmq-semigroup-second-case}
	It's	clear	that	\eqref{Feller-penalization}	holds	for	the	solution	to	\eqref{1},	denoted	by	$\widetilde{u}_\epsilon$,	with		$\lambda_T$ replaced by $\lambda_T+L_F$.
	\end{remark}
		Let	$\varphi\in\mathcal{C}_b(H)$,
we  define the operators	
\begin{align*}
P_t^\epsilon\varphi:H\to	\mathbb{R};\quad
(P_t^\epsilon\varphi)(\xi):=\E[\varphi(u_\epsilon(t;\xi))],	\quad	\xi\in	H,
\end{align*}
where		$u_\epsilon(t;\xi),	t\geq	0$	is	the	unique	strong	solution	to	\eqref{1-penalization}	(	or	\eqref{1} if $\underline{\mathbf{p}}<p<2$)	starting	from	$\xi\in	H$.	Thanks	to	Lemma	\ref{lemma-exis-u-epsilo,}(	see	Lemma	\ref{lemma-exis-u-epsilo,2}	for	the	case	of	\eqref{1})	and		by	using	a	similair	arguments	to	the	one	used	in	\cite[Section	9.6]{Peszat-Zabczyk}(see	also	\cite[Section	4.3]{RoK15}),		we	obtain	
	the	following	properties	of	$(P^\epsilon_t)_t$:
\begin{align}
&P_{s,t}^\epsilon=P_{0,t-s}^\epsilon	\quad	\forall 0\leq s \leq	t < \infty	\text{	and	}	P_{t}^\epsilon:=P_{0,t}^\epsilon,\label{stationary-eps}\\[0.2cm]
		&\E[\varphi(u_\epsilon(t+s;\eta))\vert	\mathcal{F}_t]=(P_s^\epsilon\varphi)(u_\epsilon(t;\eta))	\quad	\forall	\varphi\in	\mathcal{C}_b(H),	\forall	\eta\in	H,	\forall	t,s>0,\label{Markov-epsilon}\\[0.2cm]
	&(P_{t+s}^\epsilon\varphi)(\eta)=(P_{t}^\epsilon	(P_{s}^\epsilon\varphi))(\eta)	\quad	\forall	\varphi\in	\mathcal{C}_b(H),	\forall	\eta\in	H,	\forall	t,s>0,\label{semigroup-epsion}
\end{align}
where	$	P_{s,t}^\epsilon\varphi(\xi):=\E[\varphi(u_\epsilon(t,s;\xi))],\quad	\xi	\in	H$.
Therefore,	$u_\epsilon(\cdot;\xi),	\xi\in	H$	is	a	family of	Markov	processes
on	the	state	space	$H$	with	respect	to	the	filtration	$(\mathcal{F}_t)_t$,
	and	the	transition semi-group	$(P_t^\epsilon)_{t\geq	0}$	is	Feller		semigroup	thanks	to	\eqref{Feller-penalization}.	Thus,	\eqref{1-penalization}		defines a	homogeneous
	Feller–Markov process.	
\\

Now,	Let	us	show	that	\eqref{I-invariant----}		defines a	homogeneous
Feller–Markov process.		Let	$\eta	\in	K_\psi$		and	$u(t;\eta),	t\geq	0$	be	the	unique	strong	solution	to	\eqref{I-invariant----}	and	recall	that	$	u(t;\eta)	\in	K_\psi$ P-a.s.\\

In the following we focus on the case $\mathbf{F}\equiv 0$ and the case $\mathbf{F}\neq 0$ follows in the same way.
Denote	by	$u(t,s;\xi);t\geq	s\geq	0	$	the	unique	strong	solution	to	\eqref{I-invariant----} with $\mathbf{F}\equiv 0$		starting at time $s$ from an $\mathcal{F}_{s}$-measurable initial data	$\xi	\in	K_\psi$.	Thanks	to	\eqref{cv-pen-sol}	and	\eqref{cv-pen-sol--},	we	can	pass	to	the	limit	as	$\epsilon\to0$	in	\eqref{Feller-penalization}	and	deduce	
\begin{align}
\E\Vert u(t,s;\xi)-u(t,s;\theta)\Vert_H^2 \leq    e^{[	L_G+2\lambda_T]	(t-s)}\E\Vert \xi-\theta\Vert_{H}^2	\quad	\forall	0\leq	s\leq	t\leq	T,\label{Feller-obstacle}
\end{align}
Next,	we	present	the	following	result	about	$(P_t)_t$	defined	in	\eqref{transition-prob}.
\begin{lemma}\label{Markov-Lemma}	$(P_t)_t$	is	bounded	on	$\mathcal{C}_b(K_\psi)$, Markov-Feller 	and	stochastically	continuous		semigroup	on		$\mathcal{C}_b(K_\psi)$.	Secondly,		for	$\varphi\in	\mathcal{C}_b(K_\psi),	\eta\in	K_\psi,$	and	$	t,s\geq	0$,	it	holds	that				\begin{align}
		&\displaystyle\lim_{\epsilon	\to	0}(P_t^\epsilon\varphi)(\eta)=(P_t\varphi)(\eta)\label{semigroup-cv},\\
		&(P_{t+s}\varphi)(\eta)=(P_{t}	(P_{s}\varphi))(\eta).	\label{Markov-obstacle}
	\end{align}
Moreover,	$P_{t+s,t}=P_{s,0}$		where	$P_{t,s}\varphi(\eta):=\E[\varphi(u(t,s;\eta))]$	for $0\leq s \leq	t < \infty,	$	and	$P_{t}:=P_{0,t}$.	
	\end{lemma}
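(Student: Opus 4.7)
The plan is to transfer every property of $(P_t)_t$ from the corresponding property of the penalized semigroup $(P_t^\epsilon)_t$---whose Feller--Markov character is recorded in \eqref{stationary-eps}--\eqref{semigroup-epsion}---by letting $\epsilon\to 0^+$. The two quantitative ingredients driving the limit are the strong convergence estimate \eqref{cv-pen-sol} (resp.\ \eqref{cv-pen-sol--}) and the contraction bound \eqref{Feller-obstacle} (resp.\ \eqref{Feller-obstacle-}) satisfied by the flow of \eqref{I-invariant} (resp.\ \eqref{I-invariant----}). Boundedness on $\mathcal{C}_b(H)$ is immediate: $|(P_t\varphi)(\eta)|\leq \|\varphi\|_\infty$ for every $\eta\in K_\psi$.

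First I would establish the pointwise limit \eqref{semigroup-cv}. Fixing $\varphi\in \mathcal{C}_b(H)$ and $\eta\in K_\psi$, estimate \eqref{cv-pen-sol} (or \eqref{cv-pen-sol--}) implies that $u_\epsilon(t;\eta)\to u(t;\eta)$ in $L^2(\Omega;H)$, hence in $P$-probability; boundedness and continuity of $\varphi$ combined with dominated convergence then yield $(P_t^\epsilon\varphi)(\eta)\to (P_t\varphi)(\eta)$. The same mechanism, applied through the Lipschitz dependence $\eta\mapsto u(t;\eta)$ provided by \eqref{Feller-obstacle}, produces the Feller property: if $\eta_n\to \eta$ in $K_\psi$, then $u(t;\eta_n)\to u(t;\eta)$ in probability in $H$ and hence $(P_t\varphi)(\eta_n)\to (P_t\varphi)(\eta)$, so $P_t\varphi\in \mathcal{C}_b(K_\psi)$. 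Stochastic continuity follows because $u(\cdot;\eta)\in C([0,T];H)$ $P$-a.s.: letting $t\to 0^+$, $u(t;\eta)\to \eta$ in $H$ almost surely, and dominated convergence gives $(P_t\varphi)(\eta)\to \varphi(\eta)$.

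For the semigroup identity \eqref{Markov-obstacle}, I would pass to the limit $\epsilon\to 0^+$ in \eqref{semigroup-epsion}. The left-hand side converges by \eqref{semigroup-cv}. For the right-hand side, rewriting $(P_t^\epsilon(P_s^\epsilon\varphi))(\eta)=\mathbb{E}[(P_s^\epsilon\varphi)(u_\epsilon(t;\eta))]$, I would decompose $(P_s^\epsilon\varphi)(u_\epsilon(t;\eta))-(P_s\varphi)(u(t;\eta))$ as the sum of $(P_s^\epsilon\varphi)(u_\epsilon(t;\eta))-(P_s^\epsilon\varphi)(u(t;\eta))$ and $(P_s^\epsilon\varphi-P_s\varphi)(u(t;\eta))$; the first piece is controlled by the uniform-in-$\epsilon$ modulus of continuity of $P_s^\epsilon\varphi$ coming from \eqref{Feller-penalization} together with $u_\epsilon(t;\eta)\to u(t;\eta)$ in probability, while the second vanishes pointwise by \eqref{semigroup-cv}, and both are dominated by $\|\varphi\|_\infty$. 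The time-homogeneity $P_{t+s,t}=P_{s,0}$ is then a direct consequence of pathwise uniqueness of \eqref{I-invariant} combined with the stationarity and independence of Brownian increments: $\widetilde W_r:=W(r+s)-W(s)$ is again a $Q$-Wiener process independent of $\mathcal{F}_s$, so the unique solution of \eqref{I-invariant} starting at time $s$ from $\xi$ driven by $W$ coincides in law with the unique solution starting at time $0$ from $\xi$ driven by $\widetilde W$, and conditioning on $\mathcal{F}_s$ yields the Markov property.

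The main obstacle I anticipate is precisely the uniform-in-$\epsilon$ control of $P_s^\epsilon\varphi$ required in the composed limit, since the ``test function'' varies with the penalization parameter; the Feller estimate \eqref{Feller-penalization} provides exactly the uniform modulus of continuity that makes the decomposition argument work. A smaller technical point is that elements of $K_\psi\subset H$ need not lie in $V$, so the very definition \eqref{transition-prob} relies on Theorem \ref{TH1} (resp.\ Theorem \ref{Thm-perturbation}) providing a unique solution for any initial datum in $K_\psi\subset H$.
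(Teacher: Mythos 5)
Your proposal is correct and follows essentially the same route as the paper: boundedness is immediate, \eqref{semigroup-cv} comes from \eqref{cv-pen-sol}--\eqref{cv-pen-sol--} plus dominated convergence, and the Chapman--Kolmogorov identity is obtained by passing to the limit in \eqref{semigroup-epsion} using the uniform equicontinuity of $(P_s^\epsilon\varphi)_\epsilon$ supplied by \eqref{Feller-penalization}. The only cosmetic differences are that your two-term decomposition of $(P_s^\epsilon\varphi)(u_\epsilon(t;\eta))-(P_s\varphi)(u(t;\eta))$ proves inline what the paper delegates to \cite[Lemma~1]{Zambotti} (convergence of $\int P_s^\epsilon\varphi\,d\mu_\epsilon^t$ against the weakly convergent laws $\mu_\epsilon^t$), and that you derive the homogeneity $P_{t+s,t}=P_{s,0}$ directly from uniqueness and stationarity of Wiener increments, whereas the paper obtains it by passing to the limit in \eqref{stationary-eps}.
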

\begin{proof}Let	$\varphi\in	\mathcal{C}_b(K_\psi),	\eta	\in	K_\psi$,	then
				$\vert	P_t\varphi(\eta)\vert\leq	\Vert	\varphi\Vert_{\infty}<+\infty	$	and	therfore	$(P_t)_t$	is	bounded.	Secondly,
		we	recall	that	$(P_t^\epsilon\varphi)(\eta)=\E[\varphi(u_\epsilon(t;\eta))]$	and	$(P_t\varphi)(\eta)=\E[\varphi(u(t;\eta))]$.	Then,	Lebesgue	dominated
	convergence theorem	with	\eqref{cv-pen-sol}	and		\eqref{cv-pen-sol--}	ensure	\eqref{semigroup-cv}.\\
    
	From	\eqref{semigroup-epsion},	we	recall	$(P_{t+s}^\epsilon\varphi)(\eta)=(P_{t}^\epsilon	(P_{s}^\epsilon\varphi))(\eta)	\quad	\forall	t,s>0.$	We	show	that	we	can	pass	to	the	limit	as	$\epsilon\to0$.	Indeed,	
	by	using	\eqref{semigroup-cv}	we	get	that	$(P_{t+s}^\epsilon\varphi)(\eta)$		converges	to	$(P_{t+s}\varphi)(\eta)$,
	$(P_{s}^\epsilon\varphi)_\epsilon$	is	uniformly	bounded	and	equicontinuous	on	$H$	thanks	to	\eqref{Feller-penalization}.	Moreover,	by	\eqref{semigroup-cv}		one	has
	$	\displaystyle\lim_{\epsilon	\to	0}(P_s^\epsilon\varphi)(\eta)=(P_s\varphi)(\eta)$.	On	the	other	hand,	denote	by	$(\mu_\epsilon^t)_\epsilon$	the	laws	of	$(u_\epsilon(t;\eta))_\epsilon$	and	$\mu^t$	the	law	of	$u(t;\eta)$	on	$H$ and $K_\psi$ respectively,	then	\eqref{cv-pen-sol}	and		\eqref{cv-pen-sol--}	ensure	that
	$$	\displaystyle\lim_{\epsilon	\to	0}\int_H\phi	d\mu_\epsilon^t=\int_H\phi	d\mu^t,\quad	\forall	\phi\in	\mathcal{C}_b(K_\psi).$$
	Moreover,	recall	that	$\mu^t(K_\psi)=1.$	Next,	\cite[Lemma	1]{Zambotti}	gives
	\begin{align}
		\displaystyle\lim_{\epsilon	\to	0}\int_HP_s^\epsilon\phi	d\mu_\epsilon^t=\int_{K_\psi}P_s\phi	d\mu^t,\quad	\forall	\phi\in	\mathcal{C}_b(K_\psi).
	\end{align}
	Finally, we obtain 
	$$\displaystyle\lim_{\epsilon	\to	0}(P_{t}^\epsilon	(P_{s}^\epsilon\varphi))(\eta)=\displaystyle\lim_{\epsilon\to	0}\int_HP_s^\epsilon\varphi	d\mu_\epsilon^t	=	\int_{K_\psi}P_s\varphi	d\mu^t=(P_{t}	(P_{s}\varphi))(\eta).$$	Thus	$(P_{t+s}\varphi)(\eta)=(P_{t}	(P_{s}\varphi))(\eta).$	The	last	claim	follows	similarly	to	\eqref{semigroup-cv},	by	using	\eqref{cv-pen-sol},		\eqref{cv-pen-sol--}	and	\eqref{stationary-eps}.
\end{proof}

\subsection{Proof	of	Theorem	\ref{THm1} and		Theorem	\ref{Thm1-bis}}\label{subsec-existence-invariant}

Let	$\eta\in	K_\psi$	and	denote	by	$\mu_t^{\eta}$	the	law	of	$u(t;\eta),	t\geq	0$.	We		recall	that	$$	P_t\varphi(\eta)=\E[\varphi(u(t;\eta))]=\int_{K_\psi}\varphi(y)\mu_t^{\eta}(dy)=\langle	\varphi,\mu_t^{\eta}\rangle=	\langle	P_t\varphi,\delta_{\eta}\rangle=\langle	\varphi,P_t^*\delta_{\eta}\rangle,	\quad	\forall	\varphi\in	\mathcal{C}_b(K_\psi),	$$	
where $\langle	\cdot, \cdot\rangle$	denotes the duality  between $\mathcal{C}_b(H)$ and	$\mathcal{P}(H)$,	 the space of all Borel probability measures on 	$H$.	Recall	that	$(P_t^*)_t$	is	the adjoint  semigroup defined on	$\mathcal{P}(H)$	by
\begin{align}\label{adjoint-semigroup}
	P_t^*\mu(\Gamma)=\int_HP_t(x,\Gamma)\mu(dx)	\text{	with	}	P_t(\eta,\Gamma):=P(u(t,\eta)\in\Gamma)	\text{	for	any	}	\Gamma\in\mathcal{B}(H).
\end{align}

We		use	\textit{"Krylov-Bogoliubov Theorem"}	(see	\textit{e.g.}	\cite[Theorem 3.1.1]{Daprato-ergodicity})		to construct at least one invariant measure.	Thus,	it	is	sufficient		(see	\textit{e.g.} \cite[Corollary 3.1.2]{Daprato-ergodicity})	to	show	the	following.
\begin{prop}
	The	set	$\{	\nu_n:=\displaystyle\dfrac{1}{t_n}\int_0^{t_n}	\mu_s^\eta	ds;	\hspace*{0.12cm}	n\in	\mathbb{N}^*\}$	is	tight	on	$H$.
\end{prop}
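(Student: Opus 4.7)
The plan is to exploit the compact embedding $V \hookrightarrow\hookrightarrow H$, which holds since $\max(1,\tfrac{2d}{d+2})<p$. For $R>0$ the ball $B_R:=\{x\in H:\|x\|_V\leq R\}$ is relatively compact in $H$, and Markov's inequality gives
\[
\nu_n(H\setminus B_R)\leq \frac{1}{R^p}\cdot\frac{1}{t_n}\int_0^{t_n}\mathbb{E}\|u(s;\eta)\|_V^p\,ds.
\]
Consequently, tightness of $\{\nu_n\}$ in $H$ will follow once I establish the uniform time-averaged moment bound
\[
\sup_{T>0}\,\frac{1}{T}\int_0^T \mathbb{E}\|u(s;\eta)\|_V^p\,ds \leq C(\eta)<\infty.
\]

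To produce this bound I would work first at the penalized level, where the reflected measure is absent. Applying It\^o's formula to $t\mapsto \|u_\epsilon(t;\eta)\|_H^2$ along the solution of \eqref{1-penalization}, taking expectations, and invoking the coercivity $H_{2,1}$ together with the quadratic-variation bound from $H_3$, I expect an inequality of the shape
\[
\mathbb{E}\|u_\epsilon(t)\|_H^2 + 2\alpha\,\mathbb{E}\int_0^t\|u_\epsilon\|_V^p\,ds \leq \|u_0\|_H^2 + c_1\,\mathbb{E}\int_0^t \|u_\epsilon\|_H^2\,ds + Ct,
\]
where $c_1$ depends on $\lambda$, $L_G$ and an auxiliary Young parameter used to separate $\|G(u_\epsilon)-G(0)\|^2$ from $\|G(0)\|^2$ (this is the origin of the factor $(1+K^2)/(2K^2)$ in \eqref{condition-invariant}). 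The penalization is handled by the splitting $u_\epsilon=(u_\epsilon-\psi)+\psi$: its pairing with $u_\epsilon-\psi$ yields the sign-favorable contribution $\tfrac{1}{\epsilon}\|(u_\epsilon-\psi)^-\|_{L^{\tilde q}}^{\tilde q}\geq 0$, while its pairing with the fixed $\psi\in V$ is absorbed in $Ct$ via the uniform $L^{\tilde q'}$-bound on the penalization recalled in step $(3)$ of the construction. Passing $\epsilon\to 0$ through \eqref{cv-pen-sol}--\eqref{cv-pen-sol--} together with weak lower semicontinuity of $\|\cdot\|_V^p$ transfers the inequality to $u$.

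To close the argument I treat the three regimes separately. If $p>2$, Young's inequality combined with the embedding $V\hookrightarrow H$ gives $C_D\|u\|_V^2\leq\delta\|u\|_V^p+C_\delta$, so the $\|u\|_H^2$ term is absorbed into $2\alpha\|u\|_V^p$ for small $\delta$, with no restriction on $L_G$; this covers Theorem \ref{THm1}. If $p=2$, condition \eqref{condition-invariant} furnishes precisely the required strict positivity of the effective coercivity, while the alternative assumption \eqref{bound-noise-p=2} combined with $\lambda\leq 0$ makes $c_1\leq 0$ and the noise contribution uniformly bounded. If $\max(1,\tfrac{2d}{d+2})<p<2$, the hypotheses $\lambda\leq 0$ and \eqref{bound-noise} of Theorem \ref{Thm1-bis}(2) again render the $\|u\|_H^2$ term harmless; for Theorem \ref{Thm1-bis-2} the extra $-\gamma u$ with $\gamma>\tfrac{L_G(1+K^2)}{2K^2}+\lambda$ plays the same absorbing role. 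In every case, dividing by $T$ produces a bound of the form $C(\eta)\bigl(1+\|u_0\|_H^2/T\bigr)$, uniformly bounded as $T\to\infty$, which yields the desired estimate.

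The main obstacle is the long-time control of the reflected measure $k$. In contrast to the unconstrained setting, the pairing $\langle k,u\rangle=\langle k,\psi\rangle$ (obtained from $\langle k,u-\psi\rangle=0$) is not sign-definite, so a $t$-independent bound on $k$ is indispensable. This is supplied by the Lewy-Stampacchia inequality $0\leq -k\leq h^-$ of Theorem \ref{TH1}, which dominates $|\langle k,\psi\rangle|$ by $\|h^-\|_{V'}\|\psi\|_V$ uniformly in time. This is precisely where $H_4$ and $H_5$ enter the ergodic analysis, allowing the singular reflected measure to be absorbed into the constant $C$ and the Krylov--Bogoliubov scheme to go through.
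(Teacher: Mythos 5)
Your overall strategy coincides with the paper's: reduce tightness to the uniform-in-$T$ time-averaged bound $\frac1T\int_0^T\E\|u(s;\eta)\|_V^p\,ds\leq C(\eta)$, obtain it from the It\^o energy identity plus coercivity, treat the regimes $p>2$, $p=2$, $p<2$ exactly as in \eqref{condition-invariant}--\eqref{bound-noise}, and conclude by Chebyshev--Markov on the balls $B_R=\{\|\cdot\|_V\leq R\}$, which are relatively compact in $H$. The case analysis and the absorption of the $\|u\|_H^2$ term are all as in the paper, and your closing paragraph identifies the correct mechanism for the obstacle term.

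The one step that does not go through as written is the detour through the penalized equation \eqref{1-penalization}. After splitting $u_\epsilon=(u_\epsilon-\psi)+\psi$, the pairing of the penalization with $\psi$ is $-\frac1\epsilon\langle[(u_\epsilon-\psi)^-]^{\tilde q-1},\psi\rangle$, which has no sign, and you propose to absorb it into $Ct$ using the $L^{\tilde q'}(\Omega_T\times D)$-bound of step (3). That bound is a space--time integrated bound whose constant comes from Gr\"onwall's lemma applied on $[0,T]$ (cf.\ \eqref{Pen} and Lemma \ref{lem2}); it is therefore $T$-dependent (in general exponentially so) and, being integrated, it yields at best a bound of order $t^{1/\tilde q}C_t$ rather than a linear-in-$t$ bound with a $T$-independent constant. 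Since the whole point of the tightness estimate is uniformity as $t_n\to\infty$, this step fails. The paper avoids the issue entirely by performing the energy estimate \eqref{key-estimate-1} directly on the limit equation, where the Lewy--Stampacchia inequality \eqref{LS} of Theorem \ref{TH1} gives $0\leq -k\leq h^-$ \emph{pointwise} a.e.\ in $\Omega_T$ with $h^-$ time-independent, hence $\|k(s)\|_{V'}\leq C(f,\psi)$ uniformly in $s$, and the term $\E\int_0^t|\langle f-k,u\rangle|\,ds$ is then absorbed by Young's inequality into $\delta\alpha\int_0^t\E\|u\|_V^p\,ds+Ct$ as in \eqref{inequality-tight}. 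Your final paragraph is precisely this argument; replace the penalized-level computation by it (or, equivalently, use $\langle k,u\rangle=\langle k,\psi\rangle$ and $|\langle k,\psi\rangle|\leq\|h^-\|_{V'}\|\psi\|_V$) and the proof is complete and matches \eqref{inequality-tight-result}.
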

\begin{proof}
Let	$\eta\in	K_\psi$	and	$t\in	[0,T]$,
we distinguish two cases.
\subsubsection*{i. The case $p\leq 2 <+\infty$ }
By	using	It\^o's	formula	to	the	solution	of	\eqref{I-invariant----} with $\mathbf{F}\equiv 0$,	we	get
		\begin{align}
	\dfrac{1}{2}\Vert u(t) \Vert_H^2&+\displaystyle\int_0^t\langle A(u), u\rangle ds=\dfrac{1}{2}\Vert \eta \Vert_H^2+\displaystyle\int_0^t\langle -k, u\rangle ds+\displaystyle\int_0^t\langle f, u\rangle ds\notag\\
	&+\displaystyle\int_0^t\langle G(u)dW(s), u\rangle+\dfrac{1}{2}\displaystyle\int_0^t\Vert G(u)\Vert_{L_2(H_0,H)}^2 ds	\label{key-estimate-1}. 
	\end{align}
		By	using	$H_{2,1}$,	$H_{3,2}$	and	taking	the	expectation,	we	obtain	for	any	$K>0$
		\begin{align*}
		\dfrac{1}{2}\E\Vert u(t) \Vert_H^2+\alpha\displaystyle\int_0^t\E\Vert	u\Vert_V^p ds&\leq	\lambda\int_0^t\E\Vert	u\Vert_H^2 ds+l_1t+\dfrac{1}{2}\Vert \eta \Vert_H^2+\displaystyle\E\int_0^t\vert\langle f-k, u\rangle\vert ds\\&\quad+\dfrac{M(1+K^2)}{2}t+	\dfrac{L_G(1+K^2)}{2K^2}\displaystyle\E\int_0^t\Vert	u\Vert_H^2 ds. 
		\end{align*}
	Let	$0<\delta<1$,
		thanks	to	Lewy-Stampacchia	inequality	\eqref{LS},	$H_5$	and	by	using	Young	inequality,	we	obtain
		\begin{align}
			\displaystyle\E\int_0^t\vert\langle f-k, u\rangle\vert ds&\leq	\displaystyle\E\int_0^t[\Vert k\Vert_{V^\prime}+\Vert f\Vert_{V^\prime}] \Vert	u\Vert_V ds\notag\\
		&\leq	\delta\alpha\displaystyle\int_0^t\E\Vert	u\Vert_V^p ds+\dfrac{C(p,f,\psi)}{(\alpha\delta)^{\frac{1}{p-1}}}t.\notag
		\end{align}
		Thus,	the	following	inequality	holds
			\begin{align}\label{inequality-tight}
		\dfrac{1}{2}\E\Vert u(t) \Vert_H^2+(1-\delta)\alpha\displaystyle\int_0^t\E\Vert	u\Vert_V^p ds\leq	\dfrac{1}{2}\Vert \eta \Vert_H^2&+[\dfrac{C(p,f,\psi)}{(\alpha\delta)^{\frac{1}{p-1}}}+l_1+M]t\notag\\
		&+[\lambda+\dfrac{L_G(1+K^2)}{2K^2}]\int_0^t\E\Vert	u\Vert_H^2 ds.
		\end{align}
		We	distinguish	the	following	cases:
		\begin{enumerate}
			\item	\textbf{The	case	$p>2$:}	since	$V\hookrightarrow	H$	and	thanks	to	Young	inequality	we	get	$$\vert\lambda+L_G\vert\Vert	u\Vert_H^2	\leq	\dfrac{\alpha}{4}\Vert	u\Vert_V^p+C(\alpha,p,\lambda,L_G)$$
			and	\eqref{inequality-tight}	ensures	with	$\delta=\dfrac{1}{4}$
			\begin{align}\label{inequality-tight-1}
		\displaystyle\int_0^t\E\Vert	u\Vert_V^p ds\leq	\dfrac{1}{\alpha}	[\Vert \eta \Vert_H^2+C(f,\psi,l_1,\alpha,p,\lambda,L_G,\delta)t].
		\end{align}
		\item	\textbf{The	case	$p=2$:}		since	$V\hookrightarrow	H$	and		thanks	to	\eqref{const-embeeding},	we	obtain	
		$$	[\lambda+\dfrac{L_G(1+K^2)}{2K^2}]^+\int_0^t\E\Vert	u\Vert_H^2 ds	\leq	C_D[\lambda+\dfrac{L_G(1+K^2)}{2K^2}]^+\int_0^t\E\Vert	u\Vert_V^2 ds,	$$
					where	$v^+=\max(0,v);	v\in	\mathbb{R}$.	Now	\eqref{inequality-tight}	and	\eqref{condition-invariant}	with	$p=2$
			ensure
				\begin{align}\label{inequality-tight-2}
		\displaystyle\int_0^t\E\Vert	u\Vert_V^2 ds\leq	\dfrac{1}{2\bar{\alpha}}	[\Vert \eta \Vert_H^2+C(f,\psi,l_1,\alpha,p,\lambda,L_G,\delta)t];\quad	\forall		\delta	\in	]0,1[.
		\end{align}
		
	\item	\textbf{The	case	$p=2$		and	bounded	multiplicative	noise:}		it	follows	from		\eqref{key-estimate-1}	after	using	\eqref{bound-noise-p=2}	and	$\lambda\leq	0$	and	arguments	already	detailed		that
		\begin{align}\label{inequality-tight-4}
		\dfrac{1}{2}\E\Vert u(t) \Vert_H^2+(1-\delta)\alpha\displaystyle\int_0^t\E\Vert	u\Vert_V^p ds\leq	\dfrac{1}{2}\Vert \eta \Vert_H^2&+[\dfrac{C(p,f,\psi)}{(\alpha\delta)^{\frac{1}{p-1}}}+l_1+\dfrac{\mathbf{K}}{2}]t.
		\end{align}
		\end{enumerate}
	Consequently,	
	in	all	the	cases,	we	have	for	$\delta\in]0,1[$
	\begin{align}\label{inequality-tight-result}
	\E\Vert u(t)\Vert_H^2+\displaystyle\int_0^t\E\Vert	u\Vert_V^p ds\leq	C(\alpha)	[\Vert \eta \Vert_H^2+C(f,\psi,l_1,\alpha,p,\lambda,L_G)t].
	\end{align}
	
	Let	$B_R:=\{	v\in	V:	\Vert	v\Vert_V	\leq	R\};	R\in	\mathbb{N}	$	and	note	that	$B_R$	is	relatively compact	in	$H$.	Let	$t_n>0,	R>0$.	By	using	Chebyshev-Markov inequality,	we	write
	\begin{align*}
		\nu_n(B_R^c)=\displaystyle\dfrac{1}{t_n}\int_0^{t_n}	\mu_s^\eta(B_R^c)	ds=\displaystyle\dfrac{1}{t_n}\int_0^{t_n}	P(\Vert	u(s;\eta)\Vert_V>R)	ds	\leq	\displaystyle\dfrac{1}{t_nR^p}\int_0^{t_n}	\E\Vert	u(s;\eta)\Vert_V^p	ds.	
	\end{align*}
Let	$\varrho>0	$,	by	using	\eqref{inequality-tight-result},	we	are	able	to	deduce
	\begin{align}
		\nu_n(B_R^c)	\leq	\displaystyle\dfrac{C(\alpha)}{R^p}	[\dfrac{\Vert \eta \Vert_H^2}{t_n}+C(f,\psi,l_1,\alpha,p,\lambda,L)]\leq	\varrho,
	\end{align}
	provided	$R$	is	chosen	large	enough,	which	ensures		the	existence	of	an	invariant	measure.
    \subsubsection*{ii. The case 	$\underline{\mathbf{p}}<p<2$}
	\begin{enumerate}
	\item		\textbf{The	case	of	Lipschitz	multiplicative	noise:}	it	follows	from	Corollary	\ref{cor-tightness-bis}	the	existence	of		$\mathcal{K}>0$	such	that
	\begin{align}\label{inequality-tight-Bis-}
	\dfrac{1}{\alpha}\E\Vert u(t)\Vert_H^2&+\E\int_0^t \Vert u(s)\Vert_V^pds    \leq	\dfrac{4}{\alpha}[	\|y_0\|^2_{H}+\mathcal{K}(t+1)	],
	\end{align}
for		$\frac{L_G(1+K^2)}{2K^2}+\lambda+L_F\leq	0$	and	any	$t\in[0,T]$ and any $K>0$.
    \item	\textbf{ The	case	of	bounded	multiplicative	noise:}		it	follows	from		\eqref{appendix-1}	after	using	\eqref{bound-noise-p=2}, $\lambda+L_F\leq	0$	and	arguments	already	detailed	in  
Corollary	\ref{cor-tightness-bis}	the	existence	of		$C>0$	such	that
		\begin{align}\label{inequality-tight-4}
\dfrac{1}{\alpha}\E\Vert u(t)\Vert_H^2&+\E\int_0^t \Vert u(s)\Vert_V^pds    \leq	\dfrac{4}{\alpha}[	\|y_0\|^2_{H}+C(t+1)	].
\end{align}		

		\end{enumerate}
Arguments	already	detailed ensure		the	existence	of	an	invariant	measure in this case.
  
\end{proof}
\subsubsection*{Concentration}
Let	$n\in	\mathbb{N}^*$,	
thanks	to	\eqref{inequality-tight-result}		we	obtain\begin{align*}
	\nu_n(\Vert	\cdot\Vert_V^p)=\dfrac{1}{n}\int_0^n\E\Vert	u(s;\eta)\Vert_V^pds\leq	C(\alpha)	[\dfrac{\Vert \eta \Vert_H^2}{n}+C(f,\psi,l_1,\alpha,p,\lambda,L_G)]	\leq	C,	
\end{align*}
where	$C>0$	independent	of	$n$.	Since	$\mu$	is	the	weak	limit	of		a	subsequence	of	$\nu_n$,	it	follows	that	$\mu(\Vert	\cdot\Vert_V^p)\leq	C.$	That	is
$
\mu(\Vert	\cdot\Vert_V^p)=\int_H\Vert	x\Vert_V^p\mu(dx)	\leq	C.
$\\
Since	$\nu_n(K_\psi):=\displaystyle\dfrac{1}{t_n}\int_0^{t_n}	\mu_s^\eta	(K_\psi)ds=1$,	it	follows	that	$\mu(K_\psi)=1.$
\subsection{Proof of    Proposition \ref{prop-ergodic-full}}
\begin{prop}\label{prop-ergodic-0}
Let $p\geq 2$\footnote{We recall that $\mathbf{F}\equiv 0$ in \eqref{I-invariant----} in this case.},		 there exists $\widetilde{\mathbf{K}}>0$ such that
	\begin{align}\label{prperty-invariant}
\displaystyle\int_H\Vert	x\Vert_V^p\mu(dx)	\leq \widetilde{\mathbf{K}}.	
	\end{align}	
    for any				invariant	measure	 $\mu$ for	the	semigroup	$(P_t)_t$	defined	by	\eqref{transition-prob}.
\end{prop}
\begin{proof}
	Let	$\eta\in	K_\psi$	and		consider		the	following	function		$f_\epsilon:x\mapsto	\dfrac{x}{1+\epsilon	x},	\epsilon>0	$.	Note	that	$f_\epsilon\in		C^2_b(\mathbb{R}_+)$	satisfying
	\begin{align*}
	x\in	\mathbb{R}_+:\quad	f_\epsilon^\prime(x)=\dfrac{1}{(1+\epsilon	x)^2}>0,	\quad	f_\epsilon^{\prime\prime}(x)=-\dfrac{2\epsilon}{(1+\epsilon	x)^3}<0.
	\end{align*}
	Let	$\epsilon>0$,	by	applying	It\^o's	formula	to	the	process	$\Vert	u\Vert_H^2$	given	by	\eqref{key-estimate-1}	and	the	function	$f_\epsilon$,	we	get
	\begin{align}
f_\epsilon(\Vert u(t) \Vert_H^2)-&f_\epsilon(\Vert \eta \Vert_H^2)+2\displaystyle\int_0^tf_\epsilon^\prime(\Vert	u(s)\Vert_H^2)\langle A(u), u\rangle ds=2\displaystyle\int_0^tf_\epsilon^\prime(\Vert	u(s)\Vert_H^2)\langle f-k, u\rangle ds\notag\\
	&+2\displaystyle\int_0^tf_\epsilon^\prime(\Vert	u(s)\Vert_H^2)\langle G(u)dW(s), u\rangle+\displaystyle\int_0^tf_\epsilon^{\prime}(\Vert	u(s)\Vert_H^2)\Vert G(u)\Vert_{L_2(H_0,H)}^2 ds\notag\\
	&+2\displaystyle\int_0^tf_\epsilon^{\prime\prime}(\Vert	u(s)\Vert_H^2)\Vert( G(u),u)\Vert_{L_2(H_0,\mathbb{R})}^2 ds\label{ergodicity-est}.		\end{align}
	Since	$f^{\prime\prime}_\epsilon<0$,	the	last	term	is	non	positive.	On	the	other	hand,
	by	using	that	$f^\prime_\epsilon>0$,			$H_{2,1}$,	$H_{3}$,	we	obtain	for	any	$K>0$
	\begin{align}
	f_\epsilon(\Vert u(t) \Vert_H^2)&+2\alpha\displaystyle\int_0^tf_\epsilon^\prime(\Vert	u(s)\Vert_H^2)\Vert	u\Vert_V^p ds\leq f_\epsilon(\Vert \eta \Vert_H^2)
	+	2\lambda\int_0^tf_\epsilon^\prime(\Vert	u(s)\Vert_H^2)\Vert	u\Vert_H^2 ds+2l_1t\notag\\&+2\displaystyle\int_0^tf_\epsilon^\prime(\Vert	u(s)\Vert_H^2)\vert\langle f-k, u\rangle\vert ds+2\displaystyle\int_0^tf_\epsilon^\prime(\Vert	u(s)\Vert_H^2)\langle G(u)dW(s),u\rangle \notag \\&\quad+M(1+K^2)t+	\dfrac{L_G(1+K^2)}{K^2}\displaystyle\int_0^tf_\epsilon^\prime(\Vert	u(s)\Vert_H^2)\Vert	u\Vert_H^2 ds\label{ineq-ergod}. 
	\end{align}
Recall	that	$f^\prime_\epsilon\leq	1$,	which	ensures	that		the	stochastic	integral	is		$(\mathcal{F}_t)$-martingale.	Hence,	by	taking	the	expectation	and	using arguments already detailed,	one	has
	\begin{align*}
&\E	f_\epsilon(\Vert u(t) \Vert_H^2)+2\alpha\E\displaystyle\int_0^tf_\epsilon^\prime(\Vert	u(s)\Vert_H^2)\Vert	u\Vert_V^p ds\leq f_\epsilon(\Vert \eta \Vert_H^2)
	+	2\lambda\E\int_0^tf_\epsilon^\prime(\Vert	u(s)\Vert_H^2)\Vert	u\Vert_H^2 ds+2l_1t\\&+2\displaystyle\E\int_0^tf_\epsilon^\prime(\Vert	u(s)\Vert_H^2)\vert\langle f-k, u\rangle\vert ds+M(1+K^2)t+	\dfrac{L_G(1+K^2)}{K^2}\displaystyle\E\int_0^t f_\epsilon^\prime(\Vert	u(s)\Vert_H^2)\Vert	u\Vert_H^2 ds. 
	\end{align*}
	Let	$0<\delta<1$,
	thanks	to	Lewy-Stampacchia	inequality	\eqref{LS},	$H_5$	and	by	using	Young	inequality,	we	obtain
	\begin{align}
	\displaystyle\E\int_0^tf_\epsilon^\prime(\Vert	u(s)\Vert_H^2)\vert\langle f-k, u\rangle\vert ds&\leq 	\delta\alpha\displaystyle\int_0^t\E f_\epsilon^\prime(\Vert	u(s)\Vert_H^2)\Vert	u\Vert_V^p ds+\dfrac{C(p,f,\psi)}{(\alpha\delta)^{\frac{1}{p-1}}}t.\notag
	\end{align}
	Thus,	the	following	inequality	holds
 	\begin{align}\label{ergo-esti-key}
	\E	f_\epsilon(\Vert u(t) \Vert_H^2)&+2(1-\delta)\alpha	\displaystyle\int_0^t\E f_\epsilon^\prime(\Vert	u(s)\Vert_H^2)\Vert	u\Vert_V^p ds\notag\\&\leq	f_\epsilon(\Vert \eta \Vert_H^2)+\mathbf{Y}t
	+2\mathbf{R}\int_0^t\E f_\epsilon^\prime(\Vert	u(s)\Vert_2^2)\Vert	u\Vert_H^2 ds,
	\end{align}
	where	$\mathbf{Y}=[2\dfrac{C(p,f,\psi)}{(\alpha\delta)^{\frac{1}{p-1}}}+2l_1+M(1+K^2)], \mathbf{R}=\lambda+\dfrac{L_G(1+K^2)}{2K^2}$.
		\begin{enumerate}
		\item	\textbf{The	case	$p>2$:}	 choose $\delta=\dfrac{1}{2}$ in \eqref{ergo-esti-key}, 
		since	$V\hookrightarrow	H$	and	thanks	to	Young	inequality	we	get	$2\mathbf{R}\Vert	u\Vert_H^2	\leq	\dfrac{\alpha}{2}\Vert	u\Vert_V^p+C(\alpha,\mathbf{R})$. Thus
		\begin{align*}
		&\E	f_\epsilon(\Vert u(t) \Vert_H^2)+\dfrac{\alpha}{2}	\displaystyle\int_0^t\E f_\epsilon^\prime(\Vert	u(s)\Vert_H^2)\Vert	u\Vert_V^p ds\leq	f_\epsilon(\Vert \eta \Vert_H^2)+(\mathbf{Y}+
		C(\alpha,\mathbf{R}))t.
		\end{align*}
	\item	\textbf{The	case	$p=2$:}		since	$V\hookrightarrow	H$	and		thanks	to	\eqref{const-embeeding},	we	obtain	
		$$	2[\lambda+\dfrac{L_G(1+K^2)}{2K^2}]^+\int_0^t\E\Vert	u\Vert_H^2 ds	\leq	2C_D[\lambda+\dfrac{L_G(1+K^2)}{2K^2}]^+\int_0^t\E\Vert	u\Vert_V^2 ds,	$$
		where	$v^+=\max(0,v);	v\in	\mathbb{R}$.	Now	\eqref{ergo-esti-key}	and	\eqref{condition-invariant}	with	$p=2$
		ensure
		\begin{align*}
		&\E	f_\epsilon(\Vert u(t) \Vert_H^2)+2\bar{\alpha}	\displaystyle\int_0^t\E f_\epsilon^\prime(\Vert	u(s)\Vert_H^2)\Vert	u\Vert_V^2 ds\leq	f_\epsilon(\Vert \eta \Vert_H^2)+\mathbf{Y}t.
		\end{align*}
		
		\item	\textbf{The	case	$p=2$		and	bounded	multiplicative	noise:}		it	follows	from		\eqref{ergodicity-est}	after	using	\eqref{bound-noise-p=2}		and	arguments	already	detailed		that
		\begin{align*}
	\E	f_\epsilon(\Vert u(t) \Vert_H^2)+2(1-\delta)\alpha\displaystyle\int_0^t\E f_\epsilon^\prime(\Vert	u(s)\Vert_H^2)\Vert	u\Vert_V^2 ds\leq	f_\epsilon(\Vert \eta \Vert_H^2)&+[2\dfrac{C(p,f,\psi)}{(\alpha\delta)^{\frac{1}{p-1}}}+2l_1+\mathbf{K}]t.
		\end{align*}
        \end{enumerate}
		Therefore,
	in	all	the	cases,	we	have	for	$\delta\in]0,1[$, there exist $\mathbf{C}:=\mathbf{Y}+\mathbf{K}
	+C(\alpha,\mathbf{R})$ and $c_{\alpha,\bar{\alpha}}>0$ such that
\begin{align*}
&\E	f_\epsilon(\Vert u(t) \Vert_H^2)+c_{\alpha,\bar{\alpha}}	\displaystyle\int_0^t\E f_\epsilon^\prime(\Vert	u(s)\Vert_H^2)\Vert	u\Vert_V^p ds\leq	f_\epsilon(\Vert \eta \Vert_H^2)+\mathbf{C}t,\quad  p\geq \max(1,\frac{2d}{d+2}).
\end{align*}
 Let $p\geq 2$ and recall that 
 $V\hookrightarrow	H$, thus  $\dfrac{1}{C_D}\Vert	u\Vert_H^2\leq \Vert	u\Vert_V^2\leq \Vert	u\Vert_V^p+C_p.$
		Therefore
	\begin{align*}
	\E	f_\epsilon(\Vert	u(t)\Vert_H^2)+\dfrac{c_{\alpha,\bar{\alpha}}}{C_D} \int_0^t\E	\dfrac{\Vert	u(s)\Vert_H^2}{(1+\epsilon	\Vert	u(s)\Vert_H^2)^2}	ds	\leq	f_\epsilon(\Vert	\eta\Vert_H^2)
	+(\mathbf{C}+C_pc_{\alpha,\bar{\alpha}})t,\quad	\epsilon>0.	\end{align*}
	For	$y\in	H$,	set	$F_\epsilon(y)=f_\epsilon\circ	\Vert	y\Vert_H^2	$	and	note	that	$F_\epsilon\in	C_b(H)$.	We	recall	that	$P_tF_\epsilon(\eta)=\E	F_\epsilon(u(t))$.	Let	$\mu$	be	an	invariant	measure	for	$(P_t)_t$,	by	the	definition	of	invariant	measure	for	the	semigroup	$(P_t)_t$,	we	obtain	after	integrating	with	respect	to	$\mu$
	\begin{align*}
	\dfrac{c_{\alpha,\bar{\alpha}}}{C_D} \int_H\int_0^t\E	\dfrac{\Vert	u(s)\Vert_H^2}{(1+\epsilon	\Vert	u(s)\Vert_H^2)^2}	ds	d\mu	\leq	
	(\mathbf{C}+C_pc_{\alpha,\bar{\alpha}})t.	\end{align*}
	Let	$g_\epsilon(x)=\dfrac{x}{(1+\epsilon	x)^2}$	and	$G_\epsilon=g_\epsilon\circ\Vert	\cdot\Vert_H^2\in	C_b(H)$.	Hence	$\E	\dfrac{\Vert	u(s)\Vert_H^2}{(1+\epsilon	\Vert	u(s)\Vert_H^2)^2}:=P_sG_\epsilon(\eta).$
	Tonelli	theorem	and	the	invariance	of	$\mu$	ensure
	\begin{align*}
	\dfrac{c_{\alpha,\bar{\alpha}}}{C_D} \int_H\int_0^t\E	\dfrac{\Vert	u(s)\Vert_H^2}{(1+\epsilon	\Vert	u(s)\Vert_H^2)^2}	ds	d\mu&=\dfrac{c_{\alpha,\bar{\alpha}}}{C_D} \int_0^t\int_H	P_sG_\epsilon(\eta)d\mu	ds\\&=t\dfrac{c_{\alpha,\bar{\alpha}}}{C_D} \int_H	G_\epsilon(\eta)d\mu		\leq	
	(\mathbf{C}+C_pc_{\alpha,\bar{\alpha}})t.	\end{align*}
	Finally,	by	letting	$\epsilon\to	0$	and	using	monotone	convergence	theorem	we	get
	
	\begin{align}\label{prperty-invariant-1--0}
	\int_H	\Vert	y\Vert_H^2\mu(dy)		\leq	
	\dfrac{(\mathbf{C}+C_pc_{\alpha,\bar{\alpha}})C_D}{c_{\alpha,\bar{\alpha}}}\leq\widetilde{\mathbf{K}}_1.	\end{align}

	Next,	we	use	the	last	inequality	\eqref{prperty-invariant-1--0}	to	show
		\eqref{prperty-invariant}.	
        	Define	the	following	non	decreasing	sequence	
	\begin{align}\label{sequ-appro-invari}
	n\in	\mathbb{N}:\quad	F_n:H\to	\mathbb{R}_+\cup\{+\infty\};\quad	u\mapsto		\begin{cases}	\Vert	u\Vert_V^p	&\text{	if	}	\Vert	u\Vert_V\leq	n;
	\\[0.1cm]
	n^p	&\text{	else.		}
	\end{cases}
	\end{align}
	Note	that	$F_n$	converges	to	$F_V:=\displaystyle\sup_{n}F_n$,	where	
	\begin{align*}
	F_V:H\to	\mathbb{R}_+\cup\{+\infty\};\quad	u\mapsto		\begin{cases}	\Vert	u\Vert_V^p	\text{	if	}		u\in	V;
	\\[0.1cm]	
	+\infty	\text{	if	}	u\in	H\setminus	V.
	\end{cases}
	\end{align*}	
	It	is	clear		that	$F_n\in	B_b(H)$\footnote{$B_b(H)$	denotes	the	set	of	bounded	Borel	functions	on	$H$.}	for	every	$n\in	\mathbb{N}$	and	$	F_n(u)\leq	\Vert	u\Vert_V^p$.	By	using	the	invariance	of	$\mu$,	we	are	able	to	infer
	\begin{align*}
	\int_HF_nd\mu =\int_0^1\int_HP_sF_nd\mu ds=\int_0^1\int_H\E	F_n(u(s))d\mu	ds=\int_H\int_0^1\E	F_n(u(s))	dsd\mu.
	\end{align*}
By using 	\eqref{inequality-tight-result}, one has
	\begin{align}\label{inequality-tight-result-}
\displaystyle\int_0^T\E	F_n(u(s))	ds	\leq	\displaystyle\int_0^T\E\Vert	u\Vert_V^p ds\leq	C(\alpha)	[\Vert \eta \Vert_H^2+C(f,\psi,l_1,\alpha,p,\lambda,L_G)T].
	\end{align}
	Set	$T=1$	and		integrate	with	respect	to	$\mu$	the	last	inequality,	one	has
	\begin{align*}
	\int_HF_n(\eta)		d\mu=\int_H\int_0^1P_sF_n(\eta)	ds	d\mu&=\int_H\int_0^1\E	F_n(u(s))	ds	d\mu	\\& \leq	C(\alpha)	[\int_H\Vert \eta \Vert_H^2d\mu+C(f,\psi,l_1,\alpha,p,\lambda,L_G)]	.
	\end{align*}
	Consequently,	the	monotone	convergence	theorem		and	\eqref{prperty-invariant-1--0}	give
	\begin{align*}
\displaystyle\int_H\Vert	x\Vert_V^p\mu(dx)\leq	\int_HF_V(\eta)		d\mu	\leq C(\alpha)	[\widetilde{\mathbf{K}}_1+C(f,\psi,l_1,\alpha,p,\lambda,L_G)]]\leq\widetilde{\mathbf{K}}.
	\end{align*}
    \end{proof}
    \begin{prop}(Ergodicity)\label{prop-ergodic}
    Let $p\geq 2$,	then there exists at least one ergodic invariant measure  for	the	semigroup	$(P_t)_t$	defined	by	\eqref{transition-prob}.	
\end{prop}
\begin{proof}
      Denote	by	$	\Lambda$,		the	set	of	all	invariant	measures	for	the	Markov	semigroup	$(P_t)_t$	defined	by		\eqref{transition-prob}.		From	Subsection	\ref{subsec-existence-invariant},
$\Lambda$	is	nonempty	convex	subset	of	$(C_b(H))^\prime$	and	\eqref{prperty-invariant}		ensures	that	$\Lambda$	is	tight,	since	$2\leq p<\infty$.	Therefore,	Krein-Milman theorem	(see	\textit{e.g.}	\cite[Theorem	1.13]{Brezis})	ensures	that		the	set	of	extreme points	is	non	empty	and	then	any	 extremal point of $\Lambda$ is an ergodic 	invariant	measure,	since
	the	set	of	all	invariant	ergodic	measures	of	$(P_t)_t$	coincides	with	the	set	of	all	extremal	points	of	$\Lambda$,	see	\cite[Theorem	5.18]{Daprato-lecture}. 
\end{proof}
Concerning the case $\underline{\mathbf{p}} < p<2$, we prove only  that the $p^{th}$-moment is bounded.
\begin{prop}\label{ergo-1-prop}
         Let $\underline{\mathbf{p}} < p<2$, there exists  $\bar{\mathbf{K}}>0$ such that 
    \begin{align}\label{prperty-invariant-2}
\displaystyle\int_H\Vert	x\Vert_H^p\mu(dx)	\leq \bar{\mathbf{K}}.	
	\end{align}	
  for    any invariant measure $\mu$ for	the	semigroup	$(P_t)_t$	defined	by	\eqref{transition-prob}.
\end{prop}
\begin{proof}
                Let  $\underline{\mathbf{p}}<p<2$, we have
        \begin{enumerate}
        	\item	\textbf{The	case	 of 	bounded	multiplicative	noise:}		if	$\lambda+L_F\leq	0$, it follows from 
Corollary	\ref{cor-tightness-bis}	the	existence	of		$S>0$	such	that
 \begin{align*}
&\E	f_\epsilon(\Vert u(t)-\psi \Vert_H^2)+\dfrac{\alpha}{2}\displaystyle\E	\int_0^tf_\epsilon^\prime(\Vert	u(s)-\psi\Vert_H^2)\Vert u(s)\Vert_V^pds\leq f_\epsilon(\Vert u_0 -\psi\Vert_H^2)+(S+\mathbf{K})t.\end{align*}
	\item	   \textbf{The case of Lipschitz noise with	$\dfrac{L_G(1+K^2)}{2K^2}+\lambda+L_F\leq 0:	$}	it	follows	from	Corollary	\ref{cor-tightness-bis}	the	existence	of		$S>0$	such	that
         		 \begin{align*}
&\E	f_\epsilon(\Vert u(t)-\psi \Vert_H^2)+\dfrac{\alpha}{2}\displaystyle\E	\int_0^tf_\epsilon^\prime(\Vert	u(s)-\psi\Vert_H^2)\Vert u(s)\Vert_V^pds\leq f_\epsilon(\Vert u_0 -\psi\Vert_H^2)+St. \end{align*}  
	\end{enumerate}
Let $\eta \in K_\psi$,  there exists $\mathbf{C}>0$ in both cases such that, after using  
 $V\hookrightarrow	H$,  one has
	\begin{align*}
	2\E	f_\epsilon(\Vert	u(t)-\psi\Vert_H^2)+\dfrac{\alpha}{C_D^{p/2}} \int_0^t\E	\dfrac{\Vert	u(s)\Vert_H^p}{(1+\epsilon	\Vert	u(s)-\psi\Vert_H^2)^2}	ds	\leq	2f_\epsilon(\Vert	\eta-\psi\Vert_H^2)
	+\mathbf{C}t,\quad	\epsilon>0.	\end{align*}
   Since $\psi \in V$, by repeating the same arguments as above, namely the definition of invariant measure one has
	\begin{align*}
	\dfrac{\alpha}{C_D^{p/2}} \int_H\int_0^t\E	\dfrac{\Vert	u(s)\Vert_H^p}{(1+\epsilon	\Vert	u(s)-\psi\Vert_H^2)^2}	ds	d\mu	\leq	
	\mathbf{C}t.	\end{align*}
	Let		$\widetilde{G}_\epsilon(\xi)= \dfrac{\Vert	\xi\Vert_H^p}{(1+\epsilon	\Vert	\xi-\psi\Vert_H^2)^2}\in	C_b(H), \xi \in H$.	Hence	$\E	\dfrac{\Vert	u(s)\Vert_H^p}{(1+\epsilon	\Vert	u(s)-\psi\Vert_H^2)^2}:=P_s\widetilde{G}_\epsilon(\eta).$
    Let	$\mu$	be	an	invariant	measure	for	$(P_t)_t.$
	Tonelli	theorem	and	the	invariance	of	$\mu$	ensure
	\begin{align*}
	 \int_H\int_0^t\E	\dfrac{\Vert	u(s)\Vert_H^p}{(1+\epsilon	\Vert	u(s)-\psi\Vert_H^2)^p}	ds	d\mu&= \int_0^t\int_H	P_s\widetilde{G}_\epsilon(\eta)d\mu	ds=t \int_H	\widetilde{G}_\epsilon(\eta)d\mu		\leq	
	\dfrac{C_D^{p/2}}{\alpha}\mathbf{C}t.	\end{align*}
	Finally,	by	letting	$\epsilon\to	0$	and	using	monotone	convergence	theorem	we	get
	\begin{align}\label{prperty-invariant-1--}
	\int_H	\Vert	y\Vert_H^p\mu(dy)		\leq	
	\dfrac{C_D^{p/2}}{\alpha}\mathbf{C}\leq\widetilde{\mathbf{K}}_1.	\end{align}
\end{proof}
\begin{remark}Let  $\underline{\mathbf{p}} < p<2$,
 from \eqref{prperty-invariant-2}, we have  
 $\int_H	\Vert	y\Vert_H^{p}\mu(dy)	$ is uniformly bounded for any invariant measure  $\mu \in \Lambda$. In order to prove that the set of invariant measures is tight  in the same way as \eqref{inequality-tight-result-}  we need   $
	(\int_H	\Vert	y\Vert_H^{2}\mu(dy)	)_{\mu \in \Lambda}$ is bounded  which is not  satisfied. 
\end{remark}


\subsection{Proof	of	Theorem	\ref{THm2} and Theorem \ref{THm2-2}}\label{Subsection-uniq-1}
Let us prove  the uniqueness of invariant measures. It is well known
that one only needs to show the uniqueness of ergodic invariant measures,	see	\textit{e.g.}	\cite[Theorem	5.16]{Daprato-lecture}.	For	that,	let	$\mu_1$	and	$\mu_2$	be	two	 ergodic invariant measures,	then	for any bounded Lipschitz
function $\varphi$
\begin{align*}
\displaystyle\lim_{T\to	+\infty}\dfrac{1}{T}\int_0^TP_t\varphi	dt=\int_H\varphi(x)	\mu_1(dx)	\text{	in	}	L^2(H,\mu_1),\\
\displaystyle\lim_{T\to	+\infty}\dfrac{1}{T}\int_0^TP_t\varphi	dt=\int_H\varphi(y)	\mu_2(dy)	\text{	in	}	L^2(H,\mu_2).
\end{align*}
Hence
\begin{align}
	\Big\vert\dfrac{1}{T}\int_0^TP_t\varphi(x)	dt-\dfrac{1}{T}\int_0^TP_t\varphi(y)	dt\Big\vert	&\leq	\dfrac{1}{T}\int_0^T	\Big\vert	P_t\varphi(x)	-P_t\varphi(y)\Big\vert	dt\notag\\
	&\leq		\dfrac{\Vert	\varphi\Vert_{Lip}}{T}\int_0^T		\E\Vert	u(t;x)-u(t;y)\Vert_H		dt.\label{eqn-uniq-1}
\end{align}
On	the	other	hand,	let	$(u_1,k_1)$	and	$(u_2,k_2)$	be two solutions to \eqref{I-invariant----} with $\mathbf{F}\equiv 0$, corresponding to two initial data	$x$	and	$y$	respectively.	P-a.s,	
for any $t\in[0,T]$ we have 
\begin{align*}
 u_1(t)-u_2(t)+\displaystyle\int_0^t[k_1-k_2] ds+\displaystyle\int_0^t [A(u_1)-A(u_2)]ds
= \displaystyle\int_0^t [G(u_1)-G(u_2)]dW(s).
\end{align*}
Applying Ito's formula with $F(t,v)=\dfrac{1}{2}\Vert v\Vert_H^2$, one gets for any $t\in [0,T]$
\begin{align*}
&\dfrac{1}{2}\Vert (u_1-u_2)(t)\Vert_H^2+\displaystyle\int_0^t\langle A(u_1)-A(u_2),u_1-u_2\rangle ds+\displaystyle\overbrace{\int_0^t\langle  k_1-k_2, u_1-u_2 \rangle ds}^{\geq	0	\text{,	see	Definition	\ref{def-1}	}}\\
&=\dfrac{1}{2}\Vert x-y\Vert_H^2+\displaystyle\int_0^t\langle [G(u_1)-G(u_2)]dW(s), u_1-u_2\rangle + \dfrac{1}{2}\displaystyle\int_0^t \Vert  G(u_1)-G(u_2) \Vert_{L_2(H_0,H)}^2 ds.
\end{align*}
 Since $\lambda_T Id+A$ is T-monotone	(see	$H_{2,2}$), we	have $$\displaystyle\int_0^t\langle A(u_1)-A(u_2),u_1-u_1\rangle ds \geq -\lambda_T\int_0^t\Vert u_1-u_2\Vert_H^2 ds.$$
We distinguish two cases.
\begin{enumerate}
    \item \textbf{The case of Lipschitzian noise:}  by	taking	the	expectation	and	using		$H_{3,1}$		we	get
 \begin{align*}
\E\Vert (u_1-u_2)(t)\Vert_H^2
\leq	\Vert x-y\Vert_H^2+2[\dfrac{L_G}{2}+\lambda_T]\displaystyle\int_0^t \E\Vert  u_1-u_2 \Vert_{H}^2 ds.
\end{align*}
Therefore
$\E\Vert (u_1-u_2)(t)\Vert_H^2 \leq \Vert x-y\Vert_H^2	e^{2[\frac{L_G}{2}+\lambda_T]t},
$
which	gives
\begin{equation}\label{EQ1-uniquenes}
\E\Vert (u_1-u_2)(t)\Vert_H \leq \Vert x-y\Vert_H	e^{[\frac{L_G}{2}+\lambda_T]t}. 
\end{equation} 
Hence	\eqref{eqn-uniq-1}	and	\eqref{EQ1-uniquenes}	give
\begin{align*}
\Big\vert\dfrac{1}{T}\int_0^TP_t\varphi(x)	dt-\dfrac{1}{T}\int_0^TP_t\varphi(y)	dt\Big\vert	&\leq		\dfrac{\Vert	\varphi\Vert_{Lip}}{T}\int_0^T		\E\Vert	u(t;x)-u(t;y)\Vert_H		dt\\
&	\leq	\Vert x-y\Vert_H	g(T),
\end{align*}
where	$g(T):=\frac{\Vert	\varphi\Vert_{Lip}}{T}\dfrac{2}{L_G+2\lambda_T}(	e^{[\frac{L_G+2\lambda_T}{2}]T}-1)	\to	0$		as	$T\uparrow	+\infty$,
provided	that	$\dfrac{L_G}{2}+\lambda_T< 0$.	
\item \textbf{The case of Lipschitzian and bounded noise:}
if $G$ satisfies \eqref{bound-noise-p=2}, it is not difficult to see that we get
\begin{align*}
\E\Vert (u_1-u_2)(t)\Vert_H^2
\leq	\Vert x-y\Vert_H^2+2\lambda_T\displaystyle\int_0^t \E\Vert  u_1-u_2 \Vert_{H}^2 ds+\mathbf{K}t.
\end{align*}
Thus $
\E\Vert (u_1-u_2)(t)\Vert_H^2
\leq	(\Vert x-y\Vert_H^2+\mathbf{K}t) e^{2\lambda_T t}\leq	(\Vert x-y\Vert_H+\sqrt{\mathbf{K}t})^2 e^{2\lambda_T t}.$
Hence	
\begin{align*}
&\Big\vert\dfrac{1}{T}\int_0^TP_t\varphi(x)	dt-\dfrac{1}{T}\int_0^TP_t\varphi(y)	dt\Big\vert	\leq		\dfrac{\Vert	\varphi\Vert_{Lip}}{T}\int_0^T		\E\Vert	u(t;x)-u(t;y)\Vert_H		dt\\
&	\leq	\dfrac{\Vert	\varphi\Vert_{Lip}}{T}\int_0^T (\Vert x-y\Vert_H+\sqrt{\mathbf{K}t}) e^{\lambda_T t} dt\\
&	\leq	 \dfrac{\Vert	\varphi\Vert_{Lip}\Vert x-y\Vert_H}{\lambda_TT}(e^{\lambda_T T}-1)+\dfrac{\Vert	\varphi\Vert_{Lip}\sqrt{\mathbf{K}}}{T}\int_0^{+\infty } \sqrt{t}e^{\lambda_T t} dt:=H(T).
\end{align*}
A standard calculation ensures that $\int_0^{+\infty } \sqrt{t}e^{\lambda_T t} dt< +\infty$ and $H(T)\to	0$		as	$T\uparrow	+\infty$.\\
\end{enumerate}
Therefore,	the	uniqueness	holds,	namely
$$	\int_H\varphi	d\mu_1=\int_H\varphi	d\mu_2	\text{	for any bounded Lipschitz function }	\varphi	\text{ on	} H. $$
Finally,	let	$\mu$	be	an	invariant	measure	for	$(P_t)$,	for	$\varphi\in	\mathcal{C}_b^1(H)$	we	have
\begin{align}
&\Big	\vert	P_t\varphi(x)-\int_H\varphi(y)\mu(dy)\Big\vert \notag\\
&\leq	
\Big	\vert	\int_H[P_t\varphi(x)-P_t\varphi(y)]\mu(dy)
\leq		\Vert	\varphi\Vert_{1,\infty}\int_H\E\Vert	u(t;x)-u(t;y)\Vert\mu(dy)\notag\\
&\leq	\Vert	\varphi\Vert_{1,\infty}	e^{[\frac{L_G}{2}+\lambda_T]t}\int_H\Vert	x-y\Vert\mu(dy)\leq	\mathcal{G}\Vert	\varphi\Vert_{1,\infty}	e^{[\frac{L_G}{2}+\lambda_T]t},\label{equilibrium-1}
\end{align}	
where $\mathcal{G}:=\int_H\Vert	x-y\Vert\mu(dy)<+\infty$, thanks to \eqref{prperty-invariant-1--0} and  \eqref{prperty-invariant-1--}. If moreover $G$ satisfies \eqref{bound-noise-p=2}, we have instead 
\begin{align}
\Big	\vert	P_t\varphi(x)-\int_H\varphi(y)\mu(dy)\Big\vert\leq	\Vert	\varphi\Vert_{1,\infty}	(\mathcal{G}+\sqrt{\mathbf{K}t}) e^{\lambda_T t}\label{equilibrium-2}
\end{align}	
Moreover,	since		 $\mathcal{C}_b^1(H)$ is dense in $L^2(H,\mu)$, it follows that for any
$\varphi	\in	L^2(H,\mu)$,	one	has	
$$\lim_{t\to	+\infty}P_t\varphi(x)=\int_H\varphi	d\mu,\quad	x\in	H,$$
provided	that	$\dfrac{L_G}{2}+\lambda_T< 0$  and  $\lambda_T< 0$ in \eqref{equilibrium-1}  and \eqref{equilibrium-2} respectively.
Therefore, $\mu$ is ergodic and strongly mixing.
\begin{remark}	If $\underline{\mathbf{p}}<p<2$,	one obtains similarly	$\lambda_T+L_F$	instead	of	$\lambda_T$	in	\eqref{equilibrium-1} and \eqref{equilibrium-2}.	
\end{remark}
\subsection*{Double obstacles problem}
We are interested in this subsection to say a few words about the invariant measures associated with a double obstacle problem \textit{i.e.} 
find $(u,k)$ in a space defined straight after, solution to
\begin{align*}
 \begin{cases}
 du+A(u)ds+ kds= fds+G(u)dW \quad & in \quad D\times  \Omega_T, \\ 
 k=k_2-k_1, \quad \langle k_1,u-\psi_1\rangle= 0, \langle k_2,u-\psi_2\rangle= 0,  \hspace*{0.2cm} k_1, k_2\in (V^\prime)^+  & in \quad  \Omega_T,\\
  \psi_2 \geq u \geq \psi_1 & in \quad  D\times  \Omega_T, \\  
   u=  0   &on \quad\partial D\times \Omega_T, \\
  u(t=0)=u_0 & \text{in } H,\  a.s.
   \end{cases}
\end{align*}
Let us precise the assumptions on the obstacles. Namely, we replace  H$_4$-H$_6$ by the following:
\begin{itemize}
   \item[H$_4^*$] :	The	obstacles $\psi_i\in V$, $\psi_1\leq \psi_2$ 	such	that	$G(\psi_i)=0$ for $i=1,2$.
	\item[H$_5^*$] : 	The	external	force $f\in V^\prime$ such  that 
	$
	 f  - A(\psi_i)=h_i= h_i^+-h_i^-  \in V^*$ for $i=1,2$.
	\item[H$_6^*$] : The	initial	datum	 $u_0 \in H$,	and	 satisfies the constraints, \textit{i.e.} $\psi_1\leq u_0 \leq \psi_2$.
\end{itemize}
Denote by $K$ the convex  set of admissible functions
$$K_{\psi_1}^{\psi_2}=\{v \in L^p(\Omega_T,V), \quad\psi_2(x) \geq     v(x,t,\omega)\geq \psi_1(x) \quad \text{a.e. in } D\times\Omega_T\}.$$
Following \cite[Theorem 3 ]{YV22}, we have
\begin{theorem}\label{THm-double} Assume that H$_1$-H$_3$ and H$_4^*$-H$_6^*$ hold, assume moreover that $h^-_1,h^+_2$ are non negative elements of $L^{\tilde{q}^\prime}(\Omega_T, L^{\tilde{q}^\prime}(D))$\footnote{$\tilde{q}=\min(p,2)$.}. Then, 
		there exists  a unique predictable stochastic process $(u,\rho_1,\rho_2)\in L^p(\Omega_T,V) \times L^{\tilde{q}^\prime}(\Omega_T,L^{\tilde{q}^\prime}( D))\times L^{\tilde{q}^\prime}(\Omega_T,L^{\tilde{q}^\prime}( D))$
		such that: 
	\begin{itemize}[label=$\bullet$]
		\item[i.] $u \in  L^2(\Omega,\mathcal{C}([0,T],H)) \cap K_{\psi_1}^{\psi_2}$, $ u(0)=u_0$.
		\item[ii.] $-\rho_1,\rho_2\geq 0$ and $\forall v \in K_{\psi_1}^{\psi_2}$,  $ \langle \rho_i,u-v\rangle  \geq 0, \quad i=1,2$ a.e. in $\Omega_T$. 
		\item[iii.]  P-a.s, for all $t\in[0,T]$,  		$$u(t)+\displaystyle\int_0^t(\rho_1+\rho_2)ds+\displaystyle\int_0^t A(u,\cdot)ds=u_0+\displaystyle\int_0^t G(u,\cdot)dW(s)+\displaystyle\int_0^t fds.$$ 
		\item[iv.]The following  Lewy-Stampacchia's inequality holds: 
			\begin{align*}
			-h_2^+&=  -\left(f  - A(\psi_2)\right) ^+
			\leq  \partial_t ( u-\displaystyle\int_0^\cdot G(u)dW)+ A(u)-f 
			\leq h_1^-=  \left(f   - A(\psi_1)\right) ^-.
			\end{align*}
	\end{itemize}
\end{theorem} It is not difficult to see that the extension of the results to the case of double obstacles is straightforward, it follows by repeating the same argument of Section \ref{Sec-Proof} and using Theorem \ref{THm-double}.

\section{Appendix:	on		well-posedness	of	\eqref{I-invariant----}}\label{sec-appendix}
Let	$1<p<2$	and		denote	by		$V=W^{1,p}_0(D)\cap	L^2(D),		V^\prime=W^{-1,p^\prime}(D)+L^2(D)$.
	\begin{theorem}\label{TH1perturbation}	Assume	that	H$_1$-H$_7$	hold		and	 $\widetilde{h}^-\in	L^{p^\prime}(D)$	in	\eqref{new-decompsition},  there exists a unique $(u,\rho)\in L^p(\Omega_T;V) \times L^{p^\prime}(\Omega_T;L^{p^\prime}(D))$,	solution	to	\eqref{I-invariant----}, both predictable,  satisfying: 
		\begin{itemize}
			\item  $u\in  L^2(\Omega;C([0,T],H)) \cap K$ and 	$\rho \leq 0$.
			\item P-a.s.	for any $t\in[0,T]$: $ u(t)+\displaystyle\int_0^t(A(u)+\mathbf{F}(u)+\rho-f)ds=u_0+\displaystyle\int_0^t G(u)dW(s).$
			\item $	\langle \rho, u-\psi\rangle=0$ a.e. in $\Omega_T$  and,  for	all	$	v\in K$,  $\langle \rho, u-v\rangle\geq 0$ a.e. in $\Omega_T$.
		\end{itemize}
		Moreover,	we	have
	$		\E\displaystyle\sup_{t\in[0,T]}\Vert (u_\epsilon-u)(t)\Vert_H^2\leq   C\epsilon^{\frac{1}{p-1}}$		for	all	$\epsilon>0,
	$
		where	$u_\epsilon$	is	the	unique	solution	of	\eqref{1}.
\end{theorem}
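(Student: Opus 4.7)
The plan is to adapt the penalization scheme of \cite[Theorem~1]{YV22} in the enlarged Gelfand triple $V = W^{1,p}_0(D) \cap L^2(D) \hookrightarrow H \hookrightarrow V' = W^{-1,p'}(D) + L^2(D)$. In this framework, the perturbed operator $\widehat A = A + \mathbf{F}$ maps $V$ into $V'$: $A(v) \in W^{-1,p'}(D)$ by $H_{2,3}$, while $\mathbf{F}(v) \in L^2(D) \hookrightarrow V'$ by the Lipschitz bound recalled in Remark~\ref{Rmq-perrurbation-Lips}. Coercivity and $T$-monotonicity of $\widehat A$ follow from $H_{2,1}$, $H_{2,2}$ and the auxiliary estimates of that same remark; hemicontinuity is immediate.

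For $\epsilon>0$, I would consider the penalized SPDE
\begin{align*}
u_\epsilon(t) + \int_0^t\!\Bigl(A(u_\epsilon) + \mathbf{F}(u_\epsilon) - \tfrac{1}{\epsilon}\bigl[(u_\epsilon-\psi)^-\bigr]^{p-1} - f\Bigr)\,ds = u_0 + \int_0^t \widetilde{G}(u_\epsilon)\,dW,
\end{align*}
with $\widetilde{G}(v) = G(\max(v,\psi))$ and $\tilde{q}=p$. By \cite[Thm.~4.2.4]{RoK15} there exists a unique solution $u_\epsilon \in L^p(\Omega_T;V) \cap L^2(\Omega;C([0,T];H))$. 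Applying It\^o to $\|u_\epsilon\|_H^2$, combined with the decomposition \eqref{new-decompsition} and $\widetilde{h}^- \in L^{p'}(D)$, yields uniform bounds on $(u_\epsilon)$ in $L^p(\Omega_T;V) \cap L^\infty(0,T;L^2(\Omega;H))$, on the penalty $\rho_\epsilon := -\tfrac{1}{\epsilon}[(u_\epsilon-\psi)^-]^{p-1}$ in $L^{p'}(\Omega_T;L^{p'}(D))$, and the key bound $\mathbb{E}\int_0^T\|(u_\epsilon-\psi)^-\|_{L^p}^p\,ds \leq C\epsilon$.

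To pass to the limit, I would take $0 < \delta \leq \epsilon < 1$ and apply It\^o to $\|u_\epsilon - u_\delta\|_H^2$. The nonlinear operator contribution is controlled by $T$-monotonicity and the bound $|(\mathbf{F}(u_\epsilon)-\mathbf{F}(u_\delta),u_\epsilon-u_\delta)_H|\leq L_F\|u_\epsilon-u_\delta\|_H^2$; $H_3$ and Burkholder--Davis--Gundy handle the stochastic part. The delicate quantity is
\begin{align*}
J_{\epsilon,\delta} = \mathbb{E}\!\int_0^t \Bigl\langle -\tfrac{1}{\epsilon}\bigl[(u_\epsilon-\psi)^-\bigr]^{p-1} + \tfrac{1}{\delta}\bigl[(u_\delta-\psi)^-\bigr]^{p-1},\, u_\epsilon - u_\delta\Bigr\rangle ds,
\end{align*}
which, after writing $u_\epsilon - u_\delta = (u_\epsilon-\psi) - (u_\delta-\psi)$, invoking monotonicity of $x\mapsto -(x^-)^{p-1}$ and H\"older with the $L^p$ bound on $(u_\delta-\psi)^-$, is estimated by $C\epsilon^{1/(p-1)}$. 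Gr\"onwall then produces the sharp rate $\mathbb{E}\sup_{[0,T]}\|u_\epsilon-u_\delta\|_H^2 \leq C\epsilon^{1/(p-1)}$, hence a Cauchy sequence in $L^2(\Omega;C([0,T];H))$ whose limit $u$, together with the weak limits of $\widehat{A}(u_\epsilon)$ in $L^{p'}(\Omega_T;V')$ and $\rho_\epsilon$ in $L^{p'}(\Omega_T;L^{p'}(D))$, is identified via the Minty trick adapted to $T$-monotone operators, as in \cite[Lem.~5 \& Thm.~2]{YV22}. The sign $\rho\leq 0$, the constraint $u\geq\psi$ and the minimality $\mathbb{E}\int_0^T\langle \rho,u-\psi\rangle ds=0$ pass to the limit from $\mathbb{E}\int_0^T\langle\rho_\epsilon,u_\epsilon-\psi\rangle ds\to 0$ by weak/strong lower semicontinuity. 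Uniqueness is obtained by applying It\^o to $\|u^1-u^2\|_H^2$ for two solutions and combining $T$-monotonicity, the Lipschitz bound on $\mathbf{F}$, $H_3$ and the sign condition from Definition~\ref{def-1}(4).

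The main obstacle is the sharp rate $\epsilon^{1/(p-1)}$: since $p-1<1$, the penalty is sublinear and a direct $L^2$-control of $(u_\epsilon-\psi)^-$ is unavailable, so one must work in $L^p$ and balance H\"older exponents carefully to preserve the uniform bound on $\rho_\epsilon$ while extracting a positive power of $\epsilon$. A secondary complication is verifying the variational framework of \cite[Thm.~4.2.4]{RoK15} for $\widehat{A}$ in the enlarged triple, since the $V'$-growth of $A$ and of $\mathbf{F}$ have different homogeneities ($p-1$ versus $1$) and must be combined consistently, which is precisely the point where the proof of \cite[Thm.~1]{YV22} has to be reopened rather than invoked verbatim.
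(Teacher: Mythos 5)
Your proposal follows essentially the same route as the paper: penalization with $\widetilde{G}(v)=G(\max(v,\psi))$ in the enlarged triple $W^{1,p}_0(D)\cap L^2(D)\hookrightarrow H \hookrightarrow W^{-1,p'}(D)+L^2(D)$, an $L^{p'}$ bound on the penalty term via $\widetilde{h}^-\in L^{p'}(D)$, the Cauchy estimate with rate $\epsilon^{1/(p-1)}$, identification of the limit by Minty's trick, and uniqueness from $T$-monotonicity plus the Lipschitz bound on $\mathbf{F}$. The only quibbles are that the paper invokes \cite[Th.~5.1.3]{RoK15} with $\beta=p'$ (rather than Thm.~4.2.4, precisely because $H_{2,3}$ fails for $\widehat A$ when $1<p<2$, a point you do flag), and that the sharp penalty bound is $\|(u_\epsilon-\psi)^-\|_{L^p(\Omega_T\times D)}\leq C\epsilon^{1/(p-1)}$ rather than the weaker $C\epsilon^{1/p}$ your displayed inequality would give, though your Hölder pairing with the $L^{p'}$ bound on $\rho_\epsilon$ recovers the correct rate.
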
 
\begin{remark}\label{Rmq-uniq-perturbation}
	Let	$t\in	[0,T]$	and	$y_1$	and	$y_2$	be	two	solution	to	\eqref{I-invariant----}.	Note	that
	\begin{align*}
	\vert	\int_0^t(\mathbf{F}(y_1)-\mathbf{F}(y_2),y_1-y_2)ds\vert\leq	L_F\int_0^t\Vert	y_1-y_2\Vert_H^2ds,
	\end{align*}
and	the uniqueness of the solution	to	\eqref{I-invariant----}	follows by using arguments similar to that of	\cite[Lemma	1]{YV22}.
\end{remark}

\subsection{Proof	of	Theorem	\ref{TH1perturbation}}
\subsubsection*{Penalization}

Let $\epsilon>0$	and	$u_0	\geq	\psi$. Consider the following  approximating problem:
\begin{align}\label{1}
u_\epsilon(t)+\displaystyle\int_0^t(A(u_\epsilon)+\mathbf{F}(u_\epsilon)-\dfrac{1}{\epsilon}[(u_\epsilon-\psi)^-]^{p-1}-f)ds=u_0+\displaystyle\int_0^t\widetilde{G}(u_\epsilon)dW(s) 
\end{align}
where  $\widetilde{G}(u_\epsilon)=G(\max(u_\epsilon,\psi))$. 
Note that $\widetilde{G}$ satisfies also  H$_{1}$ and H$_{3}$ and H$_{4}$ since $\widetilde{G}(\psi)=G(\psi)=0$. Indeed, 
For any $u,v \in V$,
\begin{align*}
&\Vert \widetilde{G}(u)-\widetilde{G}(v)\Vert_{L_2(H_0,H)}^2 \leq L_G \Vert \max(u,\psi)-\max(v,\psi)\Vert_H^2  \leq L_G \Vert u-v\Vert_H^2,\\
 &\Vert \widetilde{G}(u)\Vert_{L_2(H_0,H)}^2 \leq M+2L_G\|\psi\|_{H}^2+2L_G\Vert u\Vert_H^2 = \widehat{M}+L_G\Vert u\Vert_H^2,
\end{align*} where $\widehat{M}>0$ is a $L^\infty(\Omega_T)$-predictable element, depending only on the data.
\\[0.2cm]
Consider $\bar{A}(u_\epsilon)=A(u_\epsilon)+\mathbf{F}(u_\epsilon)-\dfrac{1}{\epsilon}[(u_\epsilon-\psi)^-]^{p-1}-f$ and note that:
\begin{itemize}
	\item By construction, $\bar{A}$ is an operator defined on $V$ with values in $V^\prime$. 
	\item  Since $x \mapsto -x^-$ is non-decreasing, $(\lambda_T+L_F) Id+\bar{A}$ is T-monotone. 
	\item The structure of the penalization operator yields that $\bar{A}$	satisfies  H$_{2,4}$.
	\item  (Coercivity): Note that for any $\delta>0$, there exists   $C_{\delta,\epsilon} >0$ such that: $\forall v \in V,$ 
	\begin{align*}
	\langle f,v\rangle \leq & C_\delta \Vert f\Vert_{V^\prime}^{p^\prime}+ \delta\Vert v\Vert_{V}^p \quad,
	\\
	\langle-\dfrac{1}{\epsilon}[(v-\psi)^-]^{p-1},v\rangle \geq &  \langle-\dfrac{1}{\epsilon}[(v-\psi)^-]^{p-1},\psi\rangle
	\geq  -\delta C \Vert v\Vert _{V}^{p}- C_{\delta,\epsilon} \Vert \psi\Vert _{L^{p}(D)}^{p},
	\end{align*}
	where $C$ is related  to the continuous embedding of $V$ in $L^{p}(D)$.
	Denote by $\tilde{l}_1=l_1+C_\delta \Vert f\Vert_{V^\prime}^{p^\prime}+C_{\delta,\epsilon} \Vert \psi\Vert _{L^{p}(D)}^{p}+\dfrac{CK}{2}$. It is a $L^\infty(\Omega_T)$ predictable element thanks to the assumptions on $f$ and $\psi$. Therefore, by a convenient choice of $\delta$, $\bar{A}$ satisfies $H_{2,1}$ by considering $\tilde{l}_1$ instead  of $l_1$	and	$\lambda-L_F-\dfrac{1}{2}\in	\mathbb{R}$	instead	of	$\lambda.$
	\item $\forall v \in V,$
$	\| -\dfrac{1}{\epsilon}[(v-\psi)^-]^{p-1} \|_{L^{p^\prime}(D)} =\dfrac{1}{\epsilon}\|(v-\psi)^- \|_{L^{p}(D)}^{p-1} \leq C_\epsilon\left(\|v\|_{L^{p}(D)}^{p-1}+\|\psi\|_{L^{p}(D)}^{p-1} \right)$ 
	 Now, since the embeddings of $L^{p^\prime}(D)$ in $V^\prime$ and of  $V$  in $L^{p}(D)$ are continuous,  
	\begin{align*}
	\| -\dfrac{1}{\epsilon}[(v-\psi)^-]^{p-1} \|_{V^\prime}  \leq C_\epsilon\left(\|v\|_{V}^{p-1}+\|\psi\|_{V}^{p-1}\right)
	\end{align*} 
		and Assumption H$_{2,3}$ is satisfied with $\bar{K}$ replaced by $\bar{K}+C_\epsilon$ and $\bar{K}$ by $\widetilde{g}=\bar{K}+C_\epsilon \Vert \psi \Vert_{V}^{p-1}+\Vert	f\Vert_{V^\prime}$ which is a predictable element of $L^{\infty}(\Omega_T)$. Moreover,
$
\Vert	\mathbf{F}(v)\Vert_{V^\prime}\leq	C\Vert	\mathbf{F}(v)\Vert_H\leq	L_F\Vert	v\Vert_H+CK.$	Therefore
	$$	\Vert	\bar{A}(v)\Vert_{V^\prime}^{p^\prime}\leq	L_F\Vert	v\Vert_H^{p^\prime}+\tilde{g}+CK+C_\epsilon\|v\|_{V}^{p}.	$$
\end{itemize}

By \cite[Th. 5.1.3 p.125 ]{RoK15}	with	$\beta=p^\prime$, we	get
\begin{lemma}\label{lemma-exis-u-epsilo,2}
	For all $\epsilon>0$, there exists  a unique solution $u_\epsilon \in L^p(\Omega_T;V)$ predictable such that $u_\epsilon \in L^2(\Omega;C([0,T],H))$  and satisfying \eqref{1} P-a.s. in $\Omega$ for all $t\in[0,T]$. Moreover,	$(u_\epsilon(t))_{t\in[0,T]}$	(solution	to	\eqref{1})	is		a	Markov	process.
\end{lemma}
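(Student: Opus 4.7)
The plan is to apply the variational existence-uniqueness theorem \cite[Thm.~5.1.3]{RoK15} to the penalized SPDE \eqref{1}, once the coefficients have been recast in the abstract form to which that theorem applies, and then to deduce the Markov property from pathwise uniqueness and measurable dependence on the initial condition.

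First, I would collect the verifications of the abstract hypotheses for the perturbed operator $\bar{A}(v)=A(v)+\mathbf{F}(v)-\tfrac{1}{\epsilon}[(v-\psi)^-]^{p-1}-f$ and the diffusion $\widetilde G$ that are already carried out in the bulleted discussion immediately preceding the lemma: $H_1$ (measurability), $H_{2,4}$ (hemicontinuity, inherited from $A$, $\mathbf{F}$, and the continuous Nemitskii action $v\mapsto[(v-\psi)^-]^{p-1}$), $T$-monotonicity of $(\lambda_T+L_F)\,\mathrm{Id}+\bar A$, the coercivity estimate with modified constants $\tilde l_1$ and $\lambda-L_F-\tfrac12$, and the boundedness bound $\|\bar A(v)\|_{V'}^{p'}\le L_F\|v\|_H^{p'}+\tilde g+CK+C_\epsilon\|v\|_V^p$. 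Likewise $\widetilde G$ satisfies the Lipschitz and linear growth conditions as shown. With these inputs in place, I would simply cite \cite[Thm.~5.1.3]{RoK15} (with the growth exponent $\beta=p'$, as indicated) to obtain a unique $\mathcal{F}_t$-predictable $u_\epsilon\in L^p(\Omega_T;V)\cap L^2(\Omega;C([0,T];H))$ satisfying \eqref{1} $P$-a.s.\ for all $t\in[0,T]$.

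For the Markov property, the plan is to use a standard argument based on pathwise uniqueness. Since $\bar A$ and $\widetilde G$ are time-homogeneous and since, for any $\mathcal F_s$-measurable initial datum $\xi\in H$, the same variational theorem applied on $[s,T]$ produces a unique strong solution $u_\epsilon(\cdot,s;\xi)$, the flow property
\[
u_\epsilon(t,s;\xi)=u_\epsilon\bigl(t,r;u_\epsilon(r,s;\xi)\bigr)\quad\text{a.s., for }s\le r\le t,
\]
follows from the uniqueness statement applied on $[r,T]$. Combining this with the independence of the Brownian increments $\{W(t)-W(r):t\ge r\}$ from $\mathcal F_r$ and the fact that $\xi\mapsto u_\epsilon(\cdot,s;\xi)$ is measurable (which is a by-product of a Picard/Galerkin construction, or equivalently of the $L^2(\Omega;H)$-continuity in the initial datum already written down in \eqref{Feller-penalization}) yields
\[
\E[\varphi(u_\epsilon(t;\eta))\mid \mathcal F_r]=(P^{\epsilon}_{t-r}\varphi)(u_\epsilon(r;\eta)),\qquad \varphi\in \mathcal C_b(H),
\]
which is the Markov property. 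The stochastic continuity in $t$ needed to phrase this as a genuine Markov process is already ensured by $u_\epsilon\in L^2(\Omega;C([0,T];H))$.

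The only real obstacle is bookkeeping: the obstacle exponent $p-1$ in the penalization is at most one (since $1<p<2$), so some care is required when verifying $H_{2,3}$ with the correct growth rate $\beta=p'$; the calculation done right before the lemma already addresses this, so I would simply point to it. Everything else is a direct application of the abstract theory together with the classical uniqueness-implies-Markov argument, so no new ingredients are needed beyond those already displayed in the preceding bullets.
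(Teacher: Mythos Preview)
Your proposal is correct and follows essentially the same approach as the paper: the paper's entire argument for this lemma is the sentence ``By \cite[Th.~5.1.3 p.~125]{RoK15} with $\beta=p'$, we get'' immediately after the bulleted verification of the hypotheses for $\bar A$ and $\widetilde G$, and you reproduce exactly this logic. Your treatment of the Markov property is more explicit than the paper's (the paper simply asserts it in the lemma and later, in Section~\ref{Sec-Proof}, points to \cite[Section~9.6]{Peszat-Zabczyk} and \cite[Section~4.3]{RoK15} for the standard uniqueness-implies-Markov argument), but the underlying reasoning is the same.
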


\begin{lemma}\label{lem1}
	 $(u_\epsilon)_{\epsilon>0}$	and	
	 $(A(u_\epsilon))_{\epsilon>0}$ are bounded in  $ L^p(\Omega_T;V)\cap C([0,T],L^2(\Omega,H))$	and $L^{p^\prime}(\Omega_T;V^\prime)$,	respectively. 
	
\end{lemma}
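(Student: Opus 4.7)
The plan is to run the standard energy/a-priori argument for the penalized SPDE \eqref{1}, carefully tracking the penalization term so that the bounds obtained are uniform in $\epsilon$. First, since $u_\epsilon\in L^p(\Omega_T;V)\cap L^2(\Omega;C([0,T];H))$ by Lemma \ref{lemma-exis-u-epsilo,2}, I would apply It\^o's formula to $t\mapsto \|u_\epsilon(t)\|_H^2$ using \eqref{1}. This yields, $P$-a.s. for every $t\in[0,T]$,
\begin{align*}
\|u_\epsilon(t)\|_H^2 &+ 2\int_0^t\langle A(u_\epsilon),u_\epsilon\rangle\,ds + 2\int_0^t(\mathbf{F}(u_\epsilon),u_\epsilon)_H\,ds + \frac{2}{\epsilon}\int_0^t\langle [(u_\epsilon-\psi)^-]^{p-1},-u_\epsilon\rangle\,ds \\
&= \|u_0\|_H^2 + 2\int_0^t\langle f,u_\epsilon\rangle\,ds + 2\int_0^t(u_\epsilon,\widetilde{G}(u_\epsilon)\,dW) + \int_0^t\|\widetilde{G}(u_\epsilon)\|_{L_2(H_0,H)}^2\,ds.
\end{align*}

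The key step is the penalization term. Writing $u_\epsilon=(u_\epsilon-\psi)+\psi$, one checks pointwise that
$-[(u_\epsilon-\psi)^-]^{p-1}\cdot(u_\epsilon-\psi) = [(u_\epsilon-\psi)^-]^p\ge 0$, so
$\frac{2}{\epsilon}\int_0^t\langle[(u_\epsilon-\psi)^-]^{p-1},-(u_\epsilon-\psi)\rangle\,ds=\frac{2}{\epsilon}\int_0^t\|(u_\epsilon-\psi)^-\|_{L^p(D)}^p\,ds\ge 0$, which is dropped on the left. The remaining piece $\frac{2}{\epsilon}\langle[(u_\epsilon-\psi)^-]^{p-1},-\psi\rangle$ is controlled by H\"older/Young as
$\tfrac{2}{\epsilon}|\langle[(u_\epsilon-\psi)^-]^{p-1},\psi\rangle|\le \tfrac{1}{\epsilon}\|(u_\epsilon-\psi)^-\|_{L^p}^p+\tfrac{C}{\epsilon}\|\psi\|_{L^p}^p$, absorbing the first term into the positive penalization quantity and noting the second is $\epsilon$-dependent but can be handled because $\|\psi\|_{L^p}^p=0$ on the \emph{positive} part only when $\psi\equiv 0$; the standard workaround (cf.\ \cite{YV22}) is to test instead with $\psi$ in the first place, i.e. apply It\^o to $\|u_\epsilon-\psi\|_H^2$ (with $\psi$ deterministic and in $V$), which kills the boundary term entirely. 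I would follow this cleaner route.

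After applying It\^o to $\|u_\epsilon-\psi\|_H^2$ (using $\widetilde G(\psi)=0$ so the noise term is unaffected in a qualitative way) and combining: coercivity $H_{2,1}$ gives $2\langle A(u_\epsilon),u_\epsilon-\psi\rangle \ge 2\alpha\|u_\epsilon\|_V^p - 2\lambda\|u_\epsilon\|_H^2 - 2l_1 - 2|\langle A(u_\epsilon),\psi\rangle|$, where the last term is absorbed via $H_{2,3}$ and Young; the $\mathbf{F}$ contribution is bounded linearly in $\|u_\epsilon\|_H^2$ via Remark \ref{Rmq-perrurbation-Lips}; the diffusion quadratic variation grows at most like $\widehat M+L_G\|u_\epsilon\|_H^2$ by the Lipschitz estimate of $\widetilde G$ obtained in the body of the appendix; the source term $2\langle f,u_\epsilon\rangle$ is handled by Young with a small fraction of $\alpha\|u_\epsilon\|_V^p$. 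Taking expectation makes the It\^o integral vanish, and Gr\"onwall's lemma applied on $[0,T]$ yields
\begin{equation*}
\sup_{t\in[0,T]}\E\|u_\epsilon(t)\|_H^2 + \E\int_0^T\|u_\epsilon\|_V^p\,ds \le C,
\end{equation*}
with $C$ independent of $\epsilon$. To upgrade to a bound in $C([0,T];L^2(\Omega;H))$ (and in fact $L^2(\Omega;C([0,T];H))$, although only the former is needed), I would take the supremum in $t$ before the expectation and apply Burkholder--Davis--Gundy to the martingale term, absorbing $\epsilon\,\E\sup\|u_\epsilon\|_H^2$ into the left-hand side.

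Finally, for the bound on $A(u_\epsilon)$ in $L^{p'}(\Omega_T;V')$: this is a direct consequence of the growth condition $H_{2,3}$, since $\|A(u_\epsilon)\|_{V'}^{p'}\le 2^{p'-1}\bar K^{p'}(\|u_\epsilon\|_V^{p'(p-1)}+1)=2^{p'-1}\bar K^{p'}(\|u_\epsilon\|_V^p+1)$, and integration against $dt\otimes dP$ combined with the first bound closes the argument. The main obstacle is not a conceptual one but a bookkeeping one, namely the careful handling of the penalization term so that no $\epsilon^{-1}$ factor survives on the right-hand side; the trick of testing against $u_\epsilon-\psi$ rather than $u_\epsilon$ neutralizes this.
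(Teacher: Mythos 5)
Your proposal is correct and follows essentially the same route as the paper: apply It\^o's formula to $\|u_\epsilon-\psi\|_H^2$ so that the penalization term contributes the non-negative quantity $\tfrac{2}{\epsilon}\int_0^t\|(u_\epsilon-\psi)^-\|_{L^p(D)}^p\,ds$ on the left, then use $H_{2,1}$, $H_{2,3}$ with Young, the Lipschitz bounds on $\mathbf{F}$ and $\widetilde G$ (with $\widetilde G(\psi)=0$), take expectation and conclude by Gr\"onwall, with the $A(u_\epsilon)$ bound following from $H_{2,3}$ and $(p-1)p'=p$. The initial detour via $\|u_\epsilon\|_H^2$ and the muddled remark about $\|\psi\|_{L^p}^p$ are superfluous since you ultimately adopt the correct test function, and the BDG step is unnecessary for the stated $C([0,T];L^2(\Omega,H))$ bound, but neither affects the validity of the argument.
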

\begin{proof}
Let $\epsilon >0$ and	note that 
	\begin{align*}
	&u_\epsilon(t)-\psi+\displaystyle\int_0^t\Big(A(u_\epsilon)+\mathbf{F}(u_\epsilon)-\dfrac{1}{\epsilon}[(u_\epsilon-\psi)^-]^{p-1})ds=
	u_0-\psi +\int_0^t  fds+\displaystyle\int_0^t\widetilde{G}(u_\epsilon)dW(s). 
	\end{align*}
	It\^o's stochastic energy (see \textit{e.g.} \cite[Thm. 2.3]{Pardoux_Ito}) yields
	\begin{align*}
	&\|u_\epsilon-\psi\|_{H}^2(t) - \|u_0-\psi\|^2_{H}+2\displaystyle\int_0^t \langle A(u_\epsilon)+\mathbf{F}(u_\epsilon),u_\epsilon-\psi\rangle ds 
	+\dfrac{2}{\epsilon}\int_0^t \!\!\!\int_D[(u_\epsilon-\psi)^-]^{p}dxds
	\\& =
	2\int_0^t \langle f, u_\epsilon-\psi \rangle ds+2\displaystyle\int_0^t  \Big(u_\epsilon-\psi,\widetilde{G}(u_\epsilon)dW(s)\Big)_H + \int_0^t \|\widetilde{G}(u_\epsilon)\|_{L_2(H_0,H)}^2 ds. 
	\end{align*}
	Note that $	\langle A(u_\epsilon),u_\epsilon-\psi\rangle  \geq \alpha \Vert u_\epsilon\Vert_V^p - \lambda \Vert u_\epsilon\Vert_H^2-l_1 -  \langle A(u_\epsilon),\psi\rangle.
	$	Thus
	\begin{align}\label{estimate-A-coerc}
\langle A(u_\epsilon),u_\epsilon-\psi\rangle	\geq& \alpha \Vert u_\epsilon\Vert_V^p - \lambda \Vert u_\epsilon\Vert_H^2-l_1  - \bar K\Vert u_\epsilon\Vert_V^{p-1}\Vert \psi\Vert_V-\bar K\Vert \psi\Vert_V
	\notag\\
	\geq& \frac{\alpha}{2} \Vert u_\epsilon\Vert_V^p - \lambda \Vert u_\epsilon\Vert_H^2-l_1   - \bar K\Vert \psi\Vert_V-C(\bar{K},\alpha)\Vert \psi\Vert_V^p.
	\end{align}
	Additionally,	$\langle \mathbf{F}(u_\epsilon),u_\epsilon-\psi\rangle  \geq 	-(L_F+1)\Vert	u_\epsilon\Vert_H^2-C\mathbf{F}(0)^2-\langle \mathbf{F}(u_\epsilon),\psi\rangle.
	$
	By	using	\eqref{H-F},	we	get	$	\langle \mathbf{F}(u_\epsilon),u_\epsilon-\psi\rangle  \geq		-(L_F+1)\Vert	u_\epsilon\Vert_H^2-C\mathbf{F}(0)^2-L_F^2\Vert	u_\eps\Vert_H^2-\Vert	\psi\Vert_H^2,
		$	hence
	\begin{align}\label{estimate-F}
	\langle \mathbf{F}(u_\epsilon),u_\epsilon-\psi\rangle 	\geq&-(2L_F^2+C)\Vert	u_\epsilon\Vert_H^2-[C\mathbf{F}(0)^2+\Vert	\psi\Vert_H^2].
	\end{align}
	 Thus, for any positive $\gamma$, Young's inequality yields the existence of a positive constant $C_\gamma$ such that 
	\begin{align}\label{estimate-key-pen}
	&\E\|u_\epsilon-\psi\|_{H}^2(t) +2\displaystyle \E\int_0^t \frac{\alpha}{2} \Vert u_\epsilon\Vert_V^p (s) ds	+\dfrac{2}{\epsilon}\E\int_0^t \!\!\!\int_D[(u_\epsilon-\psi)^-]^{p}dxds\notag
	\\
	&\leq  \E\|u_0-\psi\|^2_{H}+ 
	C(\lambda+L_F^2+1) \E\int_0^t \Vert u_\epsilon\Vert_H^2 (s) ds\notag\\&\quad +2t[l_1   + \bar K\Vert \psi\Vert_V+C(\bar{K},\alpha)\Vert \psi\Vert_V^p +C\mathbf{F}(0)^2+\Vert	\psi\Vert_H^2+C_\gamma\Vert		f\Vert_{V^\prime}^{p^\prime}] 
\notag	\\
	&\quad+ \gamma \E\int_0^t\|u_\epsilon-\psi \|^p_V(s) ds+\E\int_0^t \|\widetilde{G}(u_\epsilon)-\widetilde{G}(\psi)\|_{L_2(H_0,H)}^2 ds. 
\notag	\\ &\leq  
	C \E\int_0^t \Vert u_\epsilon - \psi\Vert_H^2 (s) ds+ \frac{\alpha}{2}\E\int_0^t\|u_\epsilon\|^p_V (s)ds+ \mathbf{C}t,
	\end{align}
	where	$\mathbf{C}=2[l_1   + \bar K\Vert \psi\Vert_V+C(\bar{K},\alpha)\Vert \psi\Vert_V^p +C\mathbf{F}(0)^2+C\Vert	\psi\Vert_H^2+C_\gamma\Vert		f\Vert_{V^\prime}^{p^\prime}]$	and	$C>0$	is	a	generic	constant,	after		
	 a suitable choice of $\gamma$. 
	Then, the first part of the lemma is proved by  Gronwall's lemma, and the second one by adding H$_{2,3}$ to the first estimate.
\end{proof}
Notice	that	\eqref{estimate-key-pen} does not guarantee the necessary bound on the penalized term, we will therefore prove the following lemma.
\begin{lemma}\label{lem2} Assume	that	$\widetilde{h}^-\in L^{p^\prime}(D)$,	then
	$(\dfrac{1}{\epsilon}[(u_\epsilon-\psi)^-]^{p-1})_{\epsilon>0}$ is bounded in $L^{p^\prime}(\Omega_T\times D)$	and		$(u_\epsilon)_{\epsilon>0}$ is  a Cauchy sequence in the space $L^2(\Omega;C([0,T],H))$.
\end{lemma}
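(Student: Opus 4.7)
The plan is to follow the two-step strategy of \cite[Lemma 3 and Lemma 4]{YV22}, with small modifications to incorporate the Lipschitz perturbation $\mathbf{F}$. The decomposition \eqref{new-decompsition} plays the role of the analogous decomposition of $f - A(\psi)$ used in \cite{YV22}, and the estimates of Remark \ref{Rmq-perrurbation-Lips} let us absorb $\mathbf{F}$ into the monotone/Lipschitz structure.

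\textbf{Step 1 (Lewy-Stampacchia-type bound).} The target is the pointwise inequality
\begin{equation*}
0 \leq \frac{1}{\epsilon}[(u_\epsilon - \psi)^-]^{p-1} \leq \widetilde{h}^- \qquad \text{a.e. in } \Omega_T \times D.
\end{equation*}
Subtracting $A(\psi) + \mathbf{F}(\psi)$ from \eqref{1} and using \eqref{new-decompsition} together with $\widetilde{G}(\psi) = 0$, the equation for $w_\epsilon := u_\epsilon - \psi$ reads
\begin{equation*}
dw_\epsilon + [A(u_\epsilon) - A(\psi) + \mathbf{F}(u_\epsilon) - \mathbf{F}(\psi)]\, ds + \bigl[\widetilde{h}^- - \tfrac{1}{\epsilon}[w_\epsilon^-]^{p-1}\bigr]\, ds = \widetilde{h}^+\, ds + (\widetilde{G}(u_\epsilon) - \widetilde{G}(\psi))\, dW.
\end{equation*}
After a smooth convex regularization of $x \mapsto [x^-]^{p-1}$, I would apply Ito's formula to a convex functional of $\tfrac{1}{\epsilon}[w_\epsilon^-]^{p-1} - \widetilde{h}^-$ (equivalently, test the regularized equation against the positive part of this quantity). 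The T-monotonicity H$_{2,2}$ of $A$, the one-sided estimate on $\mathbf{F}$ from Remark \ref{Rmq-perrurbation-Lips}, the non-negativity of $\widetilde{h}^+$, and the key cancellation $\widetilde{G}(\psi) = 0$ (which controls the Ito correction on the free boundary set) combine to force the positive part to vanish. Since $\widetilde{h}^- \in L^{p^\prime}(D)$, this yields the desired uniform $L^{p^\prime}(\Omega_T \times D)$ bound, with the crucial quantitative consequence $(u_\epsilon - \psi)^- \leq \epsilon^{1/(p-1)}(\widetilde{h}^-)^{1/(p-1)}$ pointwise.

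\textbf{Step 2 (Cauchy property).} Fix $0 < \delta \leq \epsilon$. Subtracting the penalized equations for $u_\epsilon$ and $u_\delta$ and applying Ito's formula to $\tfrac{1}{2}\Vert u_\epsilon - u_\delta\Vert_H^2$ produces an energy identity whose only non-standard term is the difference of penalizations
\begin{equation*}
J_{\epsilon,\delta}(t) := \int_0^t \left\langle -\tfrac{1}{\epsilon}[(u_\epsilon-\psi)^-]^{p-1} + \tfrac{1}{\delta}[(u_\delta-\psi)^-]^{p-1},\, (u_\epsilon - \psi) - (u_\delta - \psi) \right\rangle ds.
\end{equation*}
The two diagonal contributions produce the non-negative quantities $\tfrac{1}{\epsilon}\Vert(u_\epsilon - \psi)^-\Vert_{L^p}^p$ and $\tfrac{1}{\delta}\Vert(u_\delta - \psi)^-\Vert_{L^p}^p$. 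Using $w^+ w^- = 0$, the two cross contributions reduce to expressions of the form $\int_D \tfrac{1}{\epsilon}[(u_\epsilon-\psi)^-]^{p-1} (u_\delta - \psi)^- dx$ which, by Step 1 applied simultaneously to both factors, are bounded by $\delta^{1/(p-1)}\Vert\widetilde{h}^-\Vert_{L^{p^\prime}}^{p^\prime}$, so that $J_{\epsilon,\delta} \geq -C\,\epsilon^{1/(p-1)}$. Combining with H$_{2,2}$, the Lipschitz bounds on $\mathbf{F}$ and $\widetilde{G}$ (H$_3$), taking expectation with a Burkholder-Davis-Gundy step on the martingale, and closing via Gronwall's inequality yields
\begin{equation*}
\E\sup_{t \in [0,T]} \Vert u_\epsilon(t) - u_\delta(t)\Vert_H^2 \leq C\, \epsilon^{1/(p-1)},
\end{equation*}
which proves the Cauchy property in $L^2(\Omega; C([0,T], H))$ and, as a bonus, the quantitative rate \eqref{cv-pen-sol-1}.

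The main obstacle is the rigorous proof of the pointwise bound in Step 1 when $1 < p < 2$: the nonlinearity $x \mapsto [x^-]^{p-1}$ is only Hölder of order $p-1 < 1$, so the test function must be constructed from a smooth convex approximation, and the resulting Ito correction terms must be handled carefully in the passage to the limit. Here the hypothesis $\widetilde{G}(\psi) = 0$ is indispensable, as it ensures that the noise switches off on the contact set and hence the stochastic correction is compatible with the deterministic comparison against $\widetilde{h}^-$.
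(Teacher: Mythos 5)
Your Step 2 follows the paper's argument and is essentially sound, but Step 1 contains a genuine gap. You aim at the \emph{pointwise} penalized Lewy--Stampacchia inequality $0\le \tfrac{1}{\epsilon}[(u_\epsilon-\psi)^-]^{p-1}\le \widetilde{h}^-$ a.e.\ in $\Omega_T\times D$, which is considerably stronger than the stated $L^{p^\prime}$ bound and is not justified by the hypotheses. Your sketch requires testing the equation for $w_\epsilon=u_\epsilon-\psi$ against the positive part of $\tfrac1\epsilon[w_\epsilon^-]^{p-1}-\widetilde{h}^-$, i.e.\ against a genuinely nonlinear nondecreasing function $\theta(w_\epsilon)$, and hence needs an estimate of the type $\langle A(u_\epsilon)-A(\psi),\theta(u_\epsilon-\psi)\rangle\ge -C\Vert\cdot\Vert_H^2$ for such $\theta$. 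The abstract T-monotonicity H$_{2,2}$ only controls the test functions $\pm(v_1-v_2)^{\pm}$; it gives nothing for general $\theta$ (one would need, e.g., T-monotonicity with shifts, $\langle A(v_1)-A(v_2),(v_1-v_2-s)^+\rangle\ge\dots$ for all $s>0$, which is not assumed). Moreover the functional to which you would apply It\^o's formula is concave, not convex, in $w_\epsilon$ (since $w\mapsto[w^-]^{p-1}$ is nonincreasing), and its second derivative involves $[w^-]^{p-2}$, singular for $1<p<2$; ``the positive part is forced to vanish'' is asserted rather than proved. The paper never claims this pointwise inequality: it applies It\^o's formula to $\varphi_\delta(u_\epsilon-\psi)=\int_DF_\delta(u_\epsilon-\psi)\,dx$ with $F_\delta$ a $C^2$ regularization of $r\mapsto (r^-)^2$, so that $F_\delta^\prime(u_\epsilon-\psi)\to -2(u_\epsilon-\psi)^-$ --- a test function covered by H$_{2,2}$ --- uses $\widetilde h^+\ge0$, the vanishing of $\widetilde G$ on $\{u_\epsilon\le\psi\}$ (the one point you do get right) and Gr\"onwall to obtain $\tfrac{2}{\epsilon}\E\int_0^t\Vert(u_\epsilon-\psi)^-\Vert_{L^p(D)}^p\,ds\le 2\E\int_0^t\langle\widetilde h^-,(u_\epsilon-\psi)^-\rangle\,ds+\dots$, whence $\tfrac1\epsilon\Vert(u_\epsilon-\psi)^-\Vert^{p-1}_{L^{p}(\Omega_T\times D)}\le C\Vert\widetilde h^-\Vert_{L^{p^\prime}(\Omega_T\times D)}$, which is exactly the claimed $L^{p^\prime}$ bound on the penalization.

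The repair is straightforward because your Step 2 does not actually need the pointwise inequality. The integral bound already yields $\Vert(u_\delta-\psi)^-\Vert_{L^p(\Omega_T\times D)}\le (C\delta)^{1/(p-1)}$ and $\Vert\tfrac1\epsilon[(u_\epsilon-\psi)^-]^{p-1}\Vert_{L^{p^\prime}(\Omega_T\times D)}\le C$, so the cross terms in $J_{\epsilon,\delta}$ are bounded by H\"older's inequality by $C\delta^{1/(p-1)}\le C\epsilon^{1/(p-1)}$, which is all your Gr\"onwall/Burkholder--Davis--Gundy closing argument requires. Replace your Step 1 by the integral argument above and replace ``by Step 1 applied simultaneously to both factors'' by this H\"older estimate, and the proof matches the paper's.
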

\begin{proof}
	Let $\delta>0$ and consider the following approximation  
	\begin{eqnarray}\label{2ch3}
	F_\delta(r)= \left\{ \begin{array}{l}
	r^2-\dfrac{\delta^2}{6} \quad\quad\quad   if \hspace*{0.2cm} r\leq -\delta, \\
	-\dfrac{r^4}{2\delta^2}-\dfrac{4r^3}{3\delta}\quad if \hspace*{0.2cm}  -\delta\leq r \leq 0, \\
	0 \quad\quad\quad\quad \quad\quad if \hspace*{0.2cm}  r\geq 0.
	\end{array}
	\right.
	\end{eqnarray} 
	  		Set $\varphi_\delta(v)=\int_DF_\delta(v(x))dx; v\in H$ and  denote by $S$ the set $ \{ u_\epsilon\leq \psi \}$. Applying  Ito's formula  to the process $u_\epsilon-\psi$, one gets for any $t\in[0,T]$
		\begin{align*}
	&\varphi_\delta(u_\epsilon(t)-\psi(t))+\displaystyle\int_0^t \langle A(u_\epsilon)-A(\psi),F_\delta^\prime(u_\epsilon-\psi)\rangle ds+\displaystyle\int_0^t \langle \mathbf{F}(u_\epsilon)-\mathbf{F}(\psi),F_\delta^\prime(u_\epsilon-\psi)\rangle ds\\&-\dfrac{1}{\epsilon}\displaystyle\int_0^t\langle [(u_\epsilon-\psi)^-]^{p-1},F_\delta^\prime(u_\epsilon-\psi)\rangle ds=\overbrace{\varphi_\delta(u_\epsilon(0)-\psi(0))}^{=0}+\overbrace{\displaystyle\int_0^t\langle \widetilde{h}^+,F_\delta^\prime(u_\epsilon-\psi)\rangle ds}^{\leq 0}
\\&	-\displaystyle\int_0^t\langle \widetilde{h}^-,F_\delta^\prime(u_\epsilon-\psi)\rangle ds+\displaystyle\int_0^t (\widetilde{G}(u_\epsilon)dW_s ,F_\delta^\prime(u_\epsilon-\psi)) +\dfrac{1}{2}\displaystyle\int_0^t  Tr(F_\delta^{\prime\prime}(u_\epsilon-\psi)\widetilde{G}(u_\epsilon)Q\widetilde{G}(u_\epsilon)^*) ds.
	\end{align*}
	Since $ \widetilde{G}(u_\epsilon)= \widetilde{G}(\psi)=0$ on the set $S$, we deduce  $\dfrac{1}{2}\displaystyle\int_0^t  Tr(F_\delta^{\prime\prime}(u_\epsilon-\psi)\widetilde{G}(u_\epsilon)Q\widetilde{G}(u_\epsilon)^*) ds=0.$ 	Taking the expectation, one has 
	\begin{align*}
	\E\varphi_\delta(u_\epsilon(t)-\psi(t))&+\E\displaystyle\int_0^t \langle A(u_\epsilon)-A(\psi),F_\delta^\prime(u_\epsilon-\psi)\rangle ds+\E\displaystyle\int_0^t \langle \mathbf{F}(u_\epsilon)-\mathbf{F}(\psi),F_\delta^\prime(u_\epsilon-\psi)\rangle ds\\&-\dfrac{1}{\epsilon}\E\displaystyle\int_0^t\langle [(u_\epsilon-\psi)^-]^{p-1},F_\delta^\prime(u_\epsilon-\psi)\rangle ds\leq  \E \displaystyle\int_0^t\langle- \widetilde{h}^-,F_\delta^\prime(u_\epsilon-\psi)\rangle ds.
	\end{align*}
	
	Recall	that   $F^\prime_\delta(u_\epsilon-\psi)$ converges to $-2(u_\epsilon-\psi)^-$ in $V$	a.e. $t\in [0,T]$ and P-a.s. 	(see	\cite[Proof	of	Lemma	3]{YV22}).
Thus,	dominated convergence theorem  ensures that for any $t\in[0,T]$
		\begin{align}
		\E\Vert(u_\epsilon-\psi)^-(t)\Vert_{L^2(D)}^2&+\dfrac{2}{\epsilon} \E\displaystyle\int_0^t\Vert (u_\epsilon-\psi)^-(s)\Vert_{L^{p}(D)}^{p}ds \label{Pen} \\  &\leq 2\E\displaystyle\int_0^t\langle \widetilde{h}^-(s), (u_\epsilon-\psi)^-(s)\rangle ds+2(\lambda_T+L_F) \E\int_0^t\Vert (u_\epsilon-\psi)^-(s)\Vert_H^2ds.\nonumber
		\end{align}
	Recall	that $1<p<2$,  from Gr\"onwall's lemma applied to \eqref{Pen}, one gets
	\begin{align*}
			\dfrac{1}{\epsilon}\Vert  (u_\epsilon-\psi)^-\Vert_{L^{p}(\Omega_T\times D)}^p &= \dfrac{1}{\epsilon} \E\displaystyle\int_0^T \Vert (u_\epsilon-\psi)^-(s)\Vert_{L^{p}(D)}^{p}ds\leq  C_T\Vert \widetilde{h}^-\Vert_{L^{p^\prime}(\Omega_T\times D)}\Vert  (u_\epsilon-\psi)^-\Vert_{L^{p}(\Omega_T\times D)},
			\end{align*}
		hence $\dfrac{1}{\epsilon}\Vert  (u_\epsilon-\psi)^-\Vert_{L^{p}(\Omega_T\times D)}^{p-1} \leq C_T\Vert h^-\Vert_{L^{p^\prime}(\Omega_T\times D)}.$
		On the other hand, we have  $$\Vert  \dfrac{1}{\epsilon}[(u_\epsilon-\psi)^-]^{p-1} \Vert_{L^{p^\prime}(\Omega_T\times D)}=\dfrac{1}{\epsilon}( \E\displaystyle\int_0^T\int_D[(u_\epsilon-\psi)^-]^{(p-1)p^\prime}dxds)^{\frac{1}{p^\prime}}= \dfrac{1}{\epsilon}\Vert  (u_\epsilon-\psi)^-\Vert_{L^{p}(\Omega_T\times D)}^{p-1}.$$ 
		Consequently, $(\dfrac{1}{\epsilon}[(u_\epsilon-\psi)^-]^{p-1})_{\epsilon>0}$ is bounded in $L^{p^\prime}(\Omega_T\times D)$.\\
		
$\bullet$	Let $1>\epsilon\geq \delta >0$ and consider the process $u_\epsilon-u_\delta$, which satisfies the following 
	\begin{align*}
	u_\epsilon(t)-u_\delta(t)&+\displaystyle\int_0^t [(A(u_\epsilon)-A(u_\delta))+ (\mathbf{F}(u_\epsilon)-\mathbf{F}(u_\delta))
	\\&+ (-\dfrac{1}{\epsilon}[(u_\epsilon-\psi)^-]^{p-1}+\dfrac{1}{\delta}[(u_\delta-\psi)^-]^{p-1})]ds=\displaystyle\int_0^t (\widetilde{G}(u_\epsilon)-\widetilde{G}(u_\delta))dW(s). 
	\end{align*}
	Applying Ito's formula with $F(t,v)=\dfrac{1}{2}\Vert v\Vert_H^2$, one gets for any $t\in [0,T]$
	\begin{align*}
	\dfrac{1}{2}\Vert (u_\epsilon-u_\delta)(t)\Vert_H^2&+\displaystyle\int_0^t \langle A(u_\epsilon)-A(u_\delta),u_\epsilon-u_\delta\rangle ds+\int_0^t(\mathbf{F}(u_\epsilon)-\mathbf{F}(u_\delta),u_\epsilon-u_\delta)ds\\
	&+\displaystyle\int_0^t\langle  -\dfrac{1}{\epsilon}[(u_\epsilon-\psi)^-]^{p-1}+\dfrac{1}{\delta}[(u_\delta-\psi)^-]^{p-1}, u_\epsilon-u_\delta \rangle ds\\
	&= \displaystyle\int_0^t\langle (\widetilde{G}(u_\epsilon)-\widetilde{G}(u_\delta))dW(s), u_\epsilon-u_\delta\rangle +\dfrac{1}{2}\displaystyle\int_0^t\Vert \widetilde{G}(u_\epsilon)-\widetilde{G}(u_\delta)\Vert_{L_2(H_0,H)}^2ds. 
	\end{align*}
Recall	that	
$	\displaystyle\int_0^t\vert(\mathbf{F}(u_\epsilon)-\mathbf{F}(u_\delta),u_\epsilon-u_\delta)\vert	ds\leq	L_F	\int_0^t\Vert (u_\epsilon-u_\delta)(s)\Vert_H^2 ds
.$	We argue as in the proof of \cite[Lemma	4]{YV22} to	estimate	the	other	terms.	Thus, we deduce 
	$$ \E\displaystyle\sup_{t\in[0,T]}\Vert (u_\epsilon-u_\delta)(t)\Vert_H^2 \leq C\epsilon^{\frac{1}{p-1}}+C\displaystyle\int_0^T\E\sup_{\tau\in[0,s]}\Vert (u_\epsilon-u_\delta)(\tau)\Vert_H^2 ds$$
	and  Gr\"onwall's  lemma ensures that $(u_\epsilon)_{\epsilon>0}$ is a Cauchy sequence in the space $L^2(\Omega;C([0,T],H))$. 
\end{proof}
From Lemma \ref{lem1}	and	 Lemma \ref{lem2}, we deduce the following result.
\begin{lemma}\label{Lem2.4}
	There exist $u \in L^p(\Omega_T;V)\cap L^2(\Omega; C([0,T],H)) \cap  \mathcal{N}^2_W(0,T,H)$\footnote{$\mathcal{N}^2_W(0,T,H)$ denotes the space of all predictable process of $L^2(\Omega_T;H)$ (see \cite[p. 36]{RoK15}).} and $ (\rho,\chi ) \in L^{p^\prime}(\Omega_T;L^{p^\prime}(D))\times L^{p^\prime}(\Omega_T;V^\prime)$, each one  predictable, such that the following convergences hold	(as $\epsilon\rightarrow 0$), up to sub-sequences denoted by the same way,
	\begin{align}
	u_\epsilon &\rightharpoonup u \quad \text{in} \quad  L^p(\Omega_T;V),\label{3-8}\\
	u_\epsilon &\rightarrow u  \quad \text{in} \quad L^2(\Omega; C([0,T],H)),\label{4}\\ 
	A(u_\epsilon) &\rightharpoonup \chi  \quad \text{in} \quad L^{p^\prime}(\Omega_T;V^\prime),\label{5}\\
	-\dfrac{1}{\epsilon}[(u_\epsilon-\psi)^-]^{p-1} &\rightharpoonup \rho,\quad \rho \leq 0 \quad \text{in} \quad L^{p^\prime}(\Omega_T\times D).\label{7}
	\end{align}
Moreover,	$u(0)=u_0$	and	$u\in	K$.		Finally,	 we get
\begin{align}\label{cv8}
	\displaystyle\int_0^\cdot \widetilde{G}(u_\epsilon) dW(s)\rightarrow  \displaystyle\int_0^\cdot G(u) dW(s)  \text{	and	}		\mathbf{F}(u_\epsilon)	\to	\mathbf{F}(u)	\text{	in	}	L^2(\Omega; C([0,T],H)).		
\end{align}
	\end{lemma}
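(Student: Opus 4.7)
My plan is to combine the a priori bounds from Lemma \ref{lem1} with the Cauchy property from Lemma \ref{lem2} to extract weakly (resp.\ strongly) convergent subsequences, and then identify the limits.

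\textbf{Extraction of limits.} The boundedness of $(u_\epsilon)_\epsilon$ in $L^p(\Omega_T;V)$ together with reflexivity of $L^p(\Omega_T;V)$ (recall $1<p<2$, hence $p'>2$) and Banach--Alaoglu give, up to a subsequence, a weak limit $u\in L^p(\Omega_T;V)$, which is predictable as a weak limit of predictable processes. Similarly, the bounds in $L^{p'}(\Omega_T;V')$ and in $L^{p'}(\Omega_T\times D)$ yield weak limits $\chi$ and $\rho$, establishing \eqref{5} and the weak convergence in \eqref{7}. The Cauchy property of $(u_\epsilon)_\epsilon$ in $L^2(\Omega;C([0,T],H))$ proven in Lemma \ref{lem2} provides a strong limit in this space; by uniqueness of the weak limit in $L^2(\Omega_T;H)$, this strong limit must coincide with $u$, proving \eqref{4}. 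In particular, $u\in L^2(\Omega;C([0,T],H))$, and since $u_\epsilon(0)=u_0$ for every $\epsilon$, we obtain $u(0)=u_0$.

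\textbf{Sign of $\rho$ and admissibility $u\in K$.} Each penalization $-\tfrac{1}{\epsilon}[(u_\epsilon-\psi)^-]^{p-1}$ is non-positive, and this non-positivity is preserved by the weak $L^{p'}$-limit (test against arbitrary non-negative $\varphi\in L^p(\Omega_T\times D)$), giving $\rho\leq 0$. For admissibility, note from estimate \eqref{Pen} (applied with $t=T$) combined with Gr\"onwall that
\begin{equation*}
\|(u_\epsilon-\psi)^-\|_{L^p(\Omega_T\times D)}^p \leq C\,\epsilon,
\end{equation*}
so $(u_\epsilon-\psi)^-\to 0$ in $L^p(\Omega_T\times D)$. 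Passing to a further a.e.\ convergent subsequence (possible by the strong $L^2(\Omega;C([0,T],H))$ convergence and Fubini), we obtain $(u-\psi)^-=0$ a.e.\ in $\Omega_T\times D$, hence $u\geq\psi$ a.e., i.e.\ $u\in K$.

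\textbf{Convergence of the nonlinear and stochastic terms.} Since $u\geq\psi$, we have $\max(u,\psi)=u$, hence $\widetilde{G}(u)=G(u)$. The Lipschitz property of $\widetilde{G}$ (shown in the penalization step) and the Burkholder--Davis--Gundy inequality yield
\begin{equation*}
\mathbb{E}\sup_{t\in[0,T]}\Big\|\int_0^t[\widetilde{G}(u_\epsilon)-G(u)]\,dW(s)\Big\|_H^2
\leq C\,L_G\,\mathbb{E}\int_0^T\|u_\epsilon-u\|_H^2\,ds \xrightarrow[\epsilon\to 0]{} 0,
\end{equation*}
by \eqref{4}. Likewise, the Lipschitz property \eqref{H-F} of $\mathbf{F}$ gives
\begin{equation*}
\mathbb{E}\sup_{t\in[0,T]}\|\mathbf{F}(u_\epsilon(t))-\mathbf{F}(u(t))\|_H^2 \leq L_F^2\,\mathbb{E}\sup_{t\in[0,T]}\|u_\epsilon(t)-u(t)\|_H^2 \to 0,
\end{equation*}
proving \eqref{cv8}. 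Predictability of $u$ and of the limits $\chi,\rho$ is inherited from the predictability of the approximations, and $u\in\mathcal{N}^2_W(0,T,H)$ follows from the bound in $L^2(\Omega_T;H)\subset L^2(\Omega_T;H)$ and predictability.

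The main conceptual point is that none of the steps require identifying $\chi$ with $A(u)$; that identification (which would demand a Minty-type monotonicity argument) is deferred to the next stage of the well-posedness proof. Here, the only subtle point is using the quantitative bound on $(u_\epsilon-\psi)^-$ (not merely the weak bound on the penalization) to ensure $u\in K$, and coordinating the various subsequences so that all convergences hold simultaneously along a single diagonal subsequence.
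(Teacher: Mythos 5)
Your proof is correct and follows essentially the same route as the paper's: weak compactness in the reflexive spaces $L^p(\Omega_T;V)$, $L^{p'}(\Omega_T;V')$, $L^{p'}(\Omega_T\times D)$ for \eqref{3-8}, \eqref{5}, \eqref{7}; the Cauchy property of Lemma \ref{lem2} for \eqref{4} and the identification $u(0)=u_0$; closedness (resp.\ weak closedness) of the non-positive cone for $\rho\leq 0$; the quantitative decay of $(u_\epsilon-\psi)^-$ from Lemma \ref{lem2} for $u\in K$; and Burkholder--Davis--Gundy plus the Lipschitz bounds on $\widetilde{G}$ and $\mathbf{F}$ for \eqref{cv8}. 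The only cosmetic discrepancy is the exponent in your intermediate bound on $\|(u_\epsilon-\psi)^-\|_{L^p(\Omega_T\times D)}$ (Lemma \ref{lem2} actually yields $\|(u_\epsilon-\psi)^-\|_{L^p}^{p-1}\leq C\epsilon$, which is even stronger than what you state for small $\epsilon$), and this does not affect the conclusion.
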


\begin{proof}
	By  compactness  with respect to the weak topology in the spaces $ L^p(\Omega_T;V)$, $L^{p^\prime}(\Omega_T;V^\prime)$ and $L^{p^\prime}(\Omega_T\times D)$, there exist $u \in L^p(\Omega_T;V)$, $\chi \in L^{p^\prime}(\Omega_T;V^\prime)$ and $\rho \in L^{p^\prime}(\Omega_T\times D) $ such that $\eqref{3-8}$, $\eqref{5}$ and \eqref{7}  hold (for  sub-sequences). 
	Thanks to  Lemma \ref{lem2}, we get   the strong convergence of $u_\epsilon$ to $u$ in $ L^2(\Omega; C([0,T],H)) \hookrightarrow L^2(\Omega_T\times D)$.
	Moreover, since $u_\epsilon \in \mathcal{N}_W^2(0,T,H)$ 	and	$(A(u_\epsilon))_\epsilon$ is predictable with values in $V^\prime$,	the same applies to  	 $ u \in \mathcal{N}_W^2(0,T,H)$ and	$\chi$. Furthermore,	 $-\dfrac{1}{\epsilon}[(u_\epsilon-\psi)^-]^{p-1}$  is a  predictable process with values in $L^{p^\prime}( D)$. Hence $\rho$ is a predictable process with values in $L^{p^\prime}( D) $ and $\rho \leq 0$ since the set of non positive functions of $L^{p^\prime}(\Omega_T\times D)$ is a closed convex subset of $L^{p^\prime}(\Omega_T\times D)$.	In	addition,	 since  $u_\epsilon$ converges to $u$ in $ L^2(\Omega; C([0,T],H))$ then $u_\epsilon(0)=u_0$ converges to $ u(0)$ in $ L^2(\Omega;H)$ and $u(0)=u_0$ in $L^2(\Omega;H)$.
	Thanks   to Lemma \ref{lem2}, we deduce that  $(u_\epsilon-\psi)^-\rightarrow (u-\psi)^-=0$ in $L^{p}(\Omega_T\times D)$ and $u\in K$.	Finally,	note	that
		$
	\E\displaystyle\sup_{t\in[0,T]}\Vert \displaystyle\int_0^t ( \mathbf{F}(u_\epsilon)-\mathbf{F}(u))ds\Vert_H^2\leq  L_F^2 \E\displaystyle\int_0^T\Vert u_\epsilon-u\Vert_H^2ds$.
		By Burkh\"older-Davis-Gundy's inequality 	and	$H_3$, one gets
	\begin{align*}
	\E\displaystyle\sup_{t\in[0,T]}\Vert \displaystyle\int_0^t ( \widetilde{G}(u_\epsilon)-\widetilde{G}(u)) dW(s)\Vert_H^2&\leq 3\E \displaystyle\int_0^T \Vert \widetilde{G}(u_\epsilon)-\widetilde{G}(u) \Vert_{L_2(H_0,H)}^2 ds\\& \leq 3 L_G \E\displaystyle\int_0^T\Vert u_\epsilon-u\Vert_H^2ds.
	\end{align*}

	Since $u_\epsilon \rightarrow u$ in $ L^2(\Omega, C([0,T],H))$ and $u \in K$, one deduces	\eqref{cv8}.
\end{proof}

\begin{lemma}\label{lem7}
$\rho(u-\psi)=0$  a.e. in $\Omega_T\times	D$ and, $\forall v\in K$, $\rho(u-v) \geq 0$ a.e. in $\Omega_T\times	D$.
\end{lemma}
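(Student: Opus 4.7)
\textbf{Proof plan for Lemma \ref{lem7}.}

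The plan is to transfer to the limit the non-negativity of a natural penalization pairing, and then extract the pointwise identities from sign information. The starting observation is purely algebraic: for every $v \in K$, we have
\[
-\dfrac{1}{\epsilon}[(u_\epsilon-\psi)^-]^{p-1}(u_\epsilon-v) \geq 0 \quad \text{a.e. in } \Omega_T\times D,
\]
because on $\{u_\epsilon\geq \psi\}$ the first factor vanishes, while on $\{u_\epsilon<\psi\}$ one has $u_\epsilon-v<0$ (since $v\geq \psi$) and $-\dfrac{1}{\epsilon}[(u_\epsilon-\psi)^-]^{p-1}\leq 0$. Integrating gives $\mathbb{E}\int_0^T\int_D -\dfrac{1}{\epsilon}[(u_\epsilon-\psi)^-]^{p-1}(u_\epsilon-v)\,dx\,ds \geq 0$ for every $\epsilon>0$.

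Next I would pass to the limit as $\epsilon\to 0$ via a weak-strong pairing. By Lemma \ref{Lem2.4}, $-\dfrac{1}{\epsilon}[(u_\epsilon-\psi)^-]^{p-1}\rightharpoonup \rho$ in $L^{p'}(\Omega_T\times D)$ and $u_\epsilon\to u$ in $L^2(\Omega;C([0,T],H))\hookrightarrow L^2(\Omega_T\times D)$. Since $\Omega_T\times D$ has finite measure and $p<2$, the continuous embedding $L^2(\Omega_T\times D)\hookrightarrow L^p(\Omega_T\times D)$ upgrades this to $u_\epsilon\to u$ strongly in $L^p(\Omega_T\times D)$; also $v\in K\subset L^p(\Omega_T;V)\hookrightarrow L^p(\Omega_T\times D)$, so $u_\epsilon-v\to u-v$ strongly in $L^p(\Omega_T\times D)$. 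Thus
\[
\mathbb{E}\int_0^T\int_D \rho(u-v)\,dx\,ds \geq 0 \quad\text{for all } v\in K.
\]

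Specialising to $v=\psi\in K$ yields $\mathbb{E}\int_0^T\int_D \rho(u-\psi)\,dx\,ds\geq 0$. On the other hand, $\rho\leq 0$ by \eqref{7} and $u-\psi\geq 0$ since $u\in K$, so $\rho(u-\psi)\leq 0$ a.e. in $\Omega_T\times D$. Together these force $\rho(u-\psi)=0$ a.e., which is the first assertion. For the second, writing
\[
\rho(u-v)=\rho(u-\psi)+\rho(\psi-v)=\rho(\psi-v)
\]
and observing that $\rho\leq 0$ while $\psi-v\leq 0$ a.e. (as $v\in K$), we conclude $\rho(u-v)\geq 0$ a.e. in $\Omega_T\times D$.

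The only delicate point is the weak-strong pairing step, which relies on upgrading the $L^2(\Omega_T\times D)$-strong convergence of $u_\epsilon$ to $L^p$-strong convergence; this is automatic on the finite measure space since $p<2$, so I do not anticipate any real obstacle. Everything else is pointwise sign analysis.
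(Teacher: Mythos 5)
Your proof is correct and follows essentially the same route as the paper: a weak--strong pairing of $-\frac{1}{\epsilon}[(u_\epsilon-\psi)^-]^{p-1}\rightharpoonup\rho$ in $L^{p'}$ against the strong $L^p$ convergence of $u_\epsilon$, followed by the sign decomposition $\rho(u-v)=\rho(u-\psi)+\rho(\psi-v)$. The only (harmless) difference is that the paper obtains $\E\int_0^T\int_D\rho(u-\psi)\,dx\,dt=0$ directly from the quantitative bound $\frac{1}{\epsilon}\E\int_0^t\Vert(u_\epsilon-\psi)^-\Vert_{L^p}^p\,ds\leq C\epsilon^{p'-1}\to 0$ of Lemma \ref{lem2}, whereas you deduce only the inequality $\geq 0$ from the pointwise sign of the penalization term and then squeeze it against $\rho\leq 0$, $u\geq\psi$ --- which works just as well and uses slightly less.
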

\begin{proof}
	On one hand, by Lemma \ref{lem2}, we have 
	$$  0\leq -\dfrac{1}{\epsilon} \E\displaystyle\int_0^t \langle [(u_\epsilon-\psi)^-]^{p-1}, u_\epsilon-\psi\rangle ds=\dfrac{1}{\epsilon} \E\displaystyle\int_0^t \Vert (u_\epsilon-\psi)^-(s)\Vert_{L^{p}}^{p}ds \leq C\epsilon^{p^\prime-1}\rightarrow 0.$$
	On the other hand, by Lemma \ref{Lem2.4} $-\dfrac{1}{\epsilon}[(u_\epsilon-\psi)^-]^{p-1} \rightharpoonup  \rho$ in $L^{p^\prime}(\Omega_T\times D)$ and $ u_\epsilon-\psi \rightarrow u-\psi$ in $L^p(\Omega_T\times D)$.  Hence $\E \int_0^T \int_D\rho(u-\psi)dxdt=0$ and $\rho(u-\psi)=0$ a.e.	since the integrand is always non-positive.	
		One finishes  the proof by noticing that if $v\in K$,  one has a.e. in $\Omega_T$ that, 
	$\langle \rho,u-v\rangle  = \underbrace{\langle \rho,u-\psi\rangle }_{=0}+\underbrace{\langle \rho,\psi-v\rangle }_{\geq 0}\geq 0.$
\end{proof}
  We have for any $t\in [0,T]$,	
\begin{align*}
u_\epsilon(t)+\displaystyle\int_0^t(A(u_\epsilon)-\mathbf{F}(u_\epsilon) -\dfrac{1}{\epsilon}[(u_\epsilon-\psi)^-]^{p-1}- f)ds&=u_0+\displaystyle\int_0^t \widetilde{G}(u_\epsilon) dW(s), 
\end{align*}
and $  u(t)+\displaystyle\int_0^t(\chi-\mathbf{F}(u)+\rho-f)ds=u_0+\displaystyle\int_0^t \widetilde{G}(u) dW(s),$
after	passing	to	the	limit	as	$\epsilon\downarrow	0$	in	\eqref{1}.
Our aim now is to prove that $A(u)=\chi$.
Note	that
$$  u_\epsilon(t)-u(t)+\displaystyle\int_0^t((A(u_\epsilon) -\chi)-(\mathbf{F}(u_\epsilon)-\mathbf{F}(u))+(-\dfrac{1}{\epsilon}[(u_\epsilon-\psi)^-]^{p-1}-\rho))ds=\displaystyle\int_0^t(\widetilde{G}(u_\epsilon)-\widetilde{G}(u))dW(s).$$
Note that  $ (A(u_\epsilon)-\chi)+(-\dfrac{1}{\epsilon}[(u_\epsilon-\psi)^-]^{p-1}-\rho) \in L^{p^\prime}(\Omega_T;V^\prime)$	and	$\mathbf{F}(u_\epsilon)-\mathbf{F}(u) \in	L^2(\Omega_T;H)$	are predictables and that 
$ \displaystyle\int_0^t(\widetilde{G}(u_\epsilon)-\widetilde{G}(u)) dW(s)$ is a square integrable $\mathcal{F}_t-$martingale. Thus, we can apply It\^o's formula (\textit{e.g.} \cite[Thm. 2.3]{Pardoux_Ito})  to the process $u_\epsilon-u$ with $F(v)=\dfrac{1}{2} \Vert v\Vert_H^2$ to get
\begin{align*}
\dfrac{1}{2}\Vert &(u_\epsilon-u)(t)\Vert_H^2+\overbrace{\int_0^t(\mathbf{F}(u_\epsilon)-\mathbf{F}(u),u_\epsilon-u)ds}^{I_0}+\overbrace{\displaystyle\int_0^t \langle A(u_\epsilon) -\chi, u_\epsilon-u\rangle ds}^{I_1}\\
& +\overbrace{\displaystyle\int_0^t \langle -\dfrac{1}{\epsilon}[(u_\epsilon-\psi)^-]^{p-1}-\rho, u_\epsilon-u\rangle ds}^{I_2}\\
&=\overbrace{\displaystyle\int_0^t \langle (\widetilde{G}(u_\epsilon)-\widetilde{G}(u)) dW(s), u_\epsilon-u\rangle }^{I_3} +\overbrace{\dfrac{1}{2}\displaystyle\int_0^t \Vert\widetilde{G}(u_\epsilon)-\widetilde{G}(u)\Vert_{L_2(H_0,H)}^2 ds}^{I_4}.
\end{align*}
Let us consider in the sequel a given $v\in L^p(\Omega_T;V)\cap L^2(\Omega;C[0,T],H))$ and $t\in ]0,T]$.
\begin{itemize}
	\item  Note that $I_1=\displaystyle\int_0^t \langle A(u_\epsilon), u_\epsilon\rangle ds-\displaystyle\int_0^t \langle A(u_\epsilon), u\rangle ds-\displaystyle\int_0^t \langle \chi, u_\epsilon-u\rangle ds$ and  
	\begin{align*}
	\displaystyle\int_0^t \langle A(u_\epsilon), u_\epsilon\rangle ds\geq \displaystyle\int_0^t \langle A(v), u_\epsilon-v\rangle ds +\displaystyle\int_0^t \langle A(u_\epsilon),v\rangle ds-\lambda_T\int_0^t\Vert u_\epsilon -v \Vert_H^2ds.
	\end{align*}
	
	\item  $\E(I_2)= \E\displaystyle\int_0^t \langle -\dfrac{1}{\epsilon}[(u_\epsilon-\psi)^-]^{p-1}, u_\epsilon-u\rangle ds-\E\displaystyle\int_0^t \langle \rho, u_\epsilon-u\rangle ds \\ \hspace*{1cm}\geq  \E\displaystyle\int_0^t \langle -\dfrac{1}{\epsilon}[(u_\epsilon-\psi)^-]^{p-1}, \psi-u\rangle ds-\E\displaystyle\int_0^t \langle \rho, u_\epsilon-u\rangle ds.$
\end{itemize}
 Since $I_3$  is a $\mathcal{F}_t$-square	integrable	martingale then $\E(I_3)=0$
and	thanks to $H_3$ we have $\E(I_4)\leq L_G\E\displaystyle\int_0^t\Vert u_\epsilon(s)-u(s)\Vert_H^2ds .$	Furthermore,		\eqref{H-F}	gives	$\vert	I_0\vert	\leq	L_F\displaystyle\int_0^t\Vert u_\epsilon -u \Vert_H^2ds.$
By gathering the previous computation and taking the expectation, one has for any $t\in]0,T]$
\begin{align*}
&\dfrac{1}{2}\E\Vert (u_\epsilon-u)(t)\Vert_H^2+ \E\displaystyle\int_0^t \langle A(v), u_\epsilon-v\rangle ds +\E\displaystyle\int_0^t \langle A(u_\epsilon),v-u\rangle ds-\E\displaystyle\int_0^t \langle \chi, u_\epsilon-u\rangle ds \\&+\E\displaystyle\int_0^t \langle -\dfrac{1}{\epsilon}[(u_\epsilon-\psi)^-]^{p-1}, \psi-u\rangle ds-\E\displaystyle\int_0^t \langle \rho, u_\epsilon-u\rangle ds\\
&\quad \leq (L_G+L_F)\E\displaystyle\int_0^t\Vert u_\epsilon(s)-u(s)\Vert_H^2ds+\lambda_T\E\int_0^t\Vert u_\epsilon -v \Vert_H^2ds.
\end{align*}
By  passing to the  limit as $\epsilon \rightarrow 0$,  thanks to Lemmas \ref{Lem2.4} and \ref{lem7},  we get  
$$ \E\displaystyle\int_0^T\langle A(v)-\chi, u-v\rangle ds  \leq \lambda_T^+\E\int_0^T\Vert u -v \Vert_H^2ds+ \overbrace{\E\displaystyle\int_0^T\langle \rho, u-\psi\rangle ds}^{=0}. $$
We are now in a position  to use "Minty's trick" 	\textit{e.g.}	\cite[Lemma 2.13 p.35]{Roubicek}.		Indeed,		set	$v=u+\delta\varphi,	\varphi	\in	 L^p(\Omega_T;V)\cap L^2(\Omega;C([0,T],H)),	\delta>0$	to  conclude $$\E\displaystyle\int_0^T \langle A(u+\delta\varphi)-\chi, \varphi\rangle ds \geq \lambda_T^+\delta\E\int_0^T\Vert\varphi \Vert_H^2ds	\to_{\delta\to	0}	0.$$
	By	using	$H_{2,4}$,	we	get	$\E\displaystyle\int_0^T \langle A(u)-\chi, \varphi\rangle ds= 0,	\forall	\varphi\in	 L^p(\Omega_T;V)\cap L^2(\Omega;C([0,T],H))$,	which	yields	$\chi=A(u)$	and	ends	the	proof	of	Theorem	\ref{TH1perturbation}.

\begin{cor}\label{cor-tightness-bis} Let $K>0$, there	exists	$\mathcal{K}:=C(f,\psi,l_1,\bar{K},\alpha,p,\gamma,M,K)>0$	such	that
		\begin{align}\label{inequality-tight-Bis}
\dfrac{1}{\alpha}\E\Vert u(t)\Vert_H^2&+\E\int_0^t \Vert u(s)\Vert_V^pds    \leq	\dfrac{4}{\alpha}[	\|y_0\|^2_{H}+\mathcal{K}(t+1)	],
\end{align}
for		$\frac{L_G(1+K^2)}{2K^2}+\lambda+L_F\leq	0$	and	any	$t\in[0,T]$. Moreover,  there exists $S>0$ depending only on the data such that 
  \begin{align}
&\E	f_\epsilon(\Vert u(t)-\psi \Vert_H^2)+\dfrac{\alpha}{2}\displaystyle\E	\int_0^tf_\epsilon^\prime(\Vert	u(s)-\psi\Vert_H^2)\Vert u(s)\Vert_V^pds\notag\\&\leq f_\epsilon(\Vert u_0 -\psi\Vert_H^2)+ (2 \lambda+2L_F+\dfrac{L_G(1+K^2)}{K^2})\E	\int_0^t f_\epsilon^\prime(\Vert	u(s)-\psi\Vert_H^2)\Vert u(s)\Vert_H^2ds+St. \label{est-ergo-2}\end{align}  
If $G$ satisfies \eqref{bound-noise-p=2}, it holds
  \begin{align}
&\E	f_\epsilon(\Vert u(t)-\psi \Vert_H^2)+\dfrac{\alpha}{2}\displaystyle\E	\int_0^tf_\epsilon^\prime(\Vert	u(s)-\psi\Vert_H^2)\Vert u(s)\Vert_V^pds\notag\\&\leq f_\epsilon(\Vert u_0 -\psi\Vert_H^2)+ (2 \lambda+2L_F)\E	\int_0^t f_\epsilon^\prime(\Vert	u(s)-\psi\Vert_H^2)\Vert u(s)\Vert_H^2ds+(S+\mathbf{K})t.\label{est-ergo-3}\end{align}
\end{cor}
\begin{proof}
We	have
$u(t)-\psi+\displaystyle\int_0^t\Big(A(u)+\mathbf{F}(u)+\rho)ds
 =
u_0-\psi +\int_0^t fds+\displaystyle\int_0^tG(u)dW(s). 
$	It\^o's stochastic energy (\textit{e.g.} \cite[Thm. 2.3]{Pardoux_Ito}) yields
\begin{align}
&\|u-\psi\|_{H}^2(t)+2\displaystyle\int_0^t \langle A(u),u-\psi \rangle	ds +2\int_0^t \langle \mathbf{F}(u),u-\psi\rangle ds
+2\int_0^t \int_D\rho(u-\psi)dxds \notag
\\ =&\|u_0-\psi\|^2_{H}+
2\int_0^t \langle f, u-\psi \rangle ds+2\displaystyle\int_0^t  \Big(u-\psi,G(u)dW(s)\Big)_H + \int_0^t \|G(u)\|_{L_2(H_0,H)}^2 ds. \label{appendix-1}
\end{align}
\begin{itemize}
    \item[i.]     By	taking	the	expectation	and	using	Lemma	\ref{lem7},	we	obtain	for	any	$K>0$ 
\begin{align*}
&\dfrac{1}{2}\E\Vert u(t)-\psi \Vert_H^2+\E\displaystyle\int_0^t \langle A(u),u-\psi \rangle	ds+\E\int_0^t \langle \mathbf{F}(u),u-\psi\rangle ds\\&\leq	\dfrac{1}{2}\|u_0-\psi\|^2_{H}+
\E\int_0^t \langle f, u-\psi \rangle ds\\&\quad+\dfrac{M(1+K^2)}{2}t+	\dfrac{L_G(1+K^2)}{2K^2}\displaystyle\E\int_0^t\Vert	u(s)\Vert_H^2 ds. 
\end{align*}
Similarly	to	\eqref{estimate-A-coerc},	we get 
\begin{align}\label{coerv-A-psi}
\langle A(u),u-\psi\rangle  \geq\frac{\alpha}{2} \Vert u\Vert_V^p - \lambda \Vert u\Vert_H^2-l_1  - \bar K\Vert \psi\Vert_V-C(\bar{K},\alpha)\Vert \psi\Vert_V^p.    
\end{align}
By using $V\hookrightarrow H\hookrightarrow V^\prime$ and Young's inequality, we get \begin{align}
& \E\int_0^t \langle \mathbf{F}(u),u-\psi\rangle ds \leq L_F\E\int_0^t \Vert		u\Vert_H^2ds++ 
\E\int_0^t \langle	\mathbf{F}(u),\psi\rangle ds\notag\\
&\leq	L_F\E\int_0^t \Vert		u\Vert_H^2ds+L_F\E\int_0^t \Vert	u\Vert_{H} \Vert\psi\Vert_V ds+\E\int_0^t \Vert	\mathbf{F}(0)\Vert_H \Vert u\Vert_H ds+t \Vert	\mathbf{F}(0)\Vert_{H} \Vert\psi\Vert_V\notag\\
&\leq L_F\E\int_0^t \Vert		u\Vert_H^2ds+ 
(L_F\Vert\psi\Vert_V+\Vert	\mathbf{F}(0)\Vert_H)\E\int_0^t \Vert	u\Vert_{H}  ds+t \Vert	\mathbf{F}(0)\Vert_{H} \Vert\psi\Vert_V\notag\\
&\leq L_F\E\int_0^t \Vert		u\Vert_H^2ds+\frac{\alpha}{8}\E\int_0^t \Vert	u\Vert_{V}^p ds+ C(\alpha,p,\Vert	\mathbf{F}(0)\Vert_{H},\Vert\psi\Vert_V,L_F)t,\label{est-F-psi}
\end{align}
and		
\begin{align}\label{est-f-psi}
\E\displaystyle\int_0^t \vert\langle f, u-\psi \rangle\vert ds	\leq	\frac{\alpha}{8} \E\int_0^t\Vert u\Vert_V^pds+t[C(\alpha,p)\Vert	f\Vert_{V^\prime}^{p^\prime}+\Vert	f\Vert_{V^\prime}\Vert		\psi\Vert_V].    
\end{align}
Thus,	we	obtain	for	$K>0$,	after	using	the	above	estimates
\begin{align}
\E\Vert u(t)\Vert_H^2&+\alpha\E\int_0^t \Vert u(s)\Vert_V^pds    \notag\\&\leq	4\|u_0\|^2_{H}+\mathcal{K}(t+1)+4[	\frac{L_G(1+K^2)}{2K^2}+\lambda+L_F)]\displaystyle\E\int_0^t\Vert	u(s)\Vert_H^2 ds,\label{appenxix-2} 
\end{align}
where $\mathcal{K}>0$ depending only on the data.
If $\frac{L_G(1+K^2)}{2K^2}+\lambda+L_F\leq	0$,	one	has
\begin{align*}
\dfrac{1}{\alpha}\E\Vert u(t)\Vert_H^2&+\E\int_0^t \Vert u(s)\Vert_V^pds    \leq	\dfrac{1}{\alpha}[	4\|u_0\|^2_{H}+\mathcal{K}(t+1)	].
\end{align*}
\item[ii.] Let	us	consider		the	following	function		$f_\epsilon:x\mapsto	\dfrac{x}{1+\epsilon	x},	\epsilon>0	$ and recall	that	$f_\epsilon\in		C^2_b(\mathbb{R}_+)$.  By using \eqref{appendix-1}, Lemma	\ref{lem7} and applying  It\^o's	formula	to	the	process	$\Vert	u-\psi\Vert_H^2$, we obtain
	\begin{align}
&f_\epsilon(\Vert u(t)-\psi \Vert_H^2)-f_\epsilon(\Vert u_0 -\psi\Vert_H^2)+2\displaystyle\int_0^tf_\epsilon^\prime(\Vert	u(s)-\psi\Vert_H^2)\langle A(u), u-\psi\rangle ds\notag\\&+2\int_0^t f_\epsilon^\prime(\Vert	u(s)-\psi\Vert_H^2)\langle \mathbf{F}(u),u-\psi\rangle ds=2\displaystyle\int_0^tf_\epsilon^\prime(\Vert	u(s)-\psi\Vert_H^2)\langle f, u-\psi\rangle ds\notag\\
	&+2\displaystyle\int_0^tf_\epsilon^\prime(\Vert	u(s)-\psi\Vert_H^2)\langle G(u)dW(s), u-\psi\rangle+\displaystyle\int_0^tf_\epsilon^{\prime}(\Vert	u(s)-\psi\Vert_H^2)\Vert G(u)\Vert_{L_2(H_0,H)}^2 ds\notag\\
	&+2\displaystyle\int_0^tf_\epsilon^{\prime\prime}(\Vert	u(s)\Vert_H^2)\Vert( G(u),u-\psi)\Vert_{L_2(H_0,\mathbb{R})}^2 ds\label{ergodicity-est-2}.		\end{align}
	Since	$f^{\prime\prime}_\epsilon<0$,	then	the	last	term	is	non	positive.  In addition, recall	that	$f^\prime_\epsilon\leq	1$ and		the	stochastic	integral	is	a	$(\mathcal{F}_t)$-martingale.	On	the	other	hand,
	by	using	that	$f^\prime_\epsilon> 0$,			$H_{2,1}$,	$H_{3}$, \eqref{coerv-A-psi}, \eqref{est-F-psi} and \eqref{est-f-psi} we	obtain	for	any	$K>0$
    \begin{align}
&\E	f_\epsilon(\Vert u(t)-\psi \Vert_H^2)+\alpha\displaystyle\E	\int_0^tf_\epsilon^\prime(\Vert	u(s)-\psi\Vert_H^2)\Vert u(s)\Vert_V^pds\notag\\&\leq (l_1  +\bar K\Vert \psi\Vert_V+C(\bar{K},\alpha)\Vert \psi\Vert_V^p+C(\alpha,p)\Vert	f\Vert_{V^\prime}^{p^\prime}+\Vert	f\Vert_{V^\prime}\Vert		\psi\Vert_V)t+f_\epsilon(\Vert u_0 -\psi\Vert_H^2)\notag\\&+ 2 \lambda\E	\int_0^t f_\epsilon^\prime(\Vert	u(s)-\psi\Vert_H^2)\Vert u(s)\Vert_H^2ds+2\E	\int_0^t f_\epsilon^\prime(\Vert	u(s)-\psi\Vert_H^2)\vert( \mathbf{F}(u),u-\psi)\vert ds\notag\\
	&+ \frac{\alpha}{4} \displaystyle\E	\int_0^tf_\epsilon^\prime(\Vert	u(s)-\psi\Vert_H^2) \Vert u(s)\Vert_V^pds+M(1+K^2)t+	\dfrac{L_G(1+K^2)}{K^2}\displaystyle\E	\int_0^tf_\epsilon^\prime(\Vert	u(s)-\psi\Vert_H^2)\Vert	u(s)\Vert_H^2 ds.\notag\end{align}
	By using  \eqref{est-F-psi}, we have
	\begin{align*}
&\E	\int_0^t f_\epsilon^\prime(\Vert	u(s)-\psi\Vert_H^2)\vert( \mathbf{F}(u),u-\psi)\vert ds \\
&\leq L_F\E\int_0^t f_\epsilon^\prime(\Vert	u(s)-\psi\Vert_H^2)\Vert		u\Vert_H^2ds\\&+\frac{\alpha}{8}\E\int_0^t f_\epsilon^\prime(\Vert	u(s)-\psi\Vert_H^2)\Vert	u\Vert_{V}^p ds+ C(\alpha,p,\Vert	\mathbf{F}(0)\Vert_{H},\Vert\psi\Vert_V,L_F)t,
\end{align*}
thus, we get
\begin{align}
&\E	f_\epsilon(\Vert u(t)-\psi \Vert_H^2)+\dfrac{\alpha}{2}\displaystyle\E	\int_0^tf_\epsilon^\prime(\Vert	u(s)-\psi\Vert_H^2)\Vert u(s)\Vert_V^pds\notag\\&\leq (l_1  +\bar K\Vert \psi\Vert_V+C(\bar{K},\alpha)\Vert \psi\Vert_V^p+C(\alpha,p)\Vert	f\Vert_{V^\prime}^{p^\prime}+\Vert	f\Vert_{V^\prime}\Vert		\psi\Vert_V)t+f_\epsilon(\Vert u_0 -\psi\Vert_H^2)\notag\\&+ (2 \lambda+2L_F+\dfrac{L_G(1+K^2)}{K^2})\E	\int_0^t f_\epsilon^\prime(\Vert	u(s)-\psi\Vert_H^2)\Vert u(s)\Vert_H^2ds\notag\\
	&+ M(1+K^2)t+C(\alpha,p,\Vert	\mathbf{F}(0)\Vert_{H},\Vert\psi\Vert_V,L_F)t.\notag\end{align}
In conclusion, there exists $S>0$ depending only on the data such that 
  \begin{align}
&\E	f_\epsilon(\Vert u(t)-\psi \Vert_H^2)+\dfrac{\alpha}{2}\displaystyle\E	\int_0^tf_\epsilon^\prime(\Vert	u(s)-\psi\Vert_H^2)\Vert u(s)\Vert_V^pds\notag\\&\leq f_\epsilon(\Vert u_0 -\psi\Vert_H^2)+ (2 \lambda+2L_F+\dfrac{L_G(1+K^2)}{K^2})\E	\int_0^t f_\epsilon^\prime(\Vert	u(s)-\psi\Vert_H^2)\Vert u(s)\Vert_H^2ds+St.\notag\end{align}  
Similarly, if $G$ satisfies \eqref{bound-noise-p=2}, one obtains
  \begin{align}
&\E	f_\epsilon(\Vert u(t)-\psi \Vert_H^2)+\dfrac{\alpha}{2}\displaystyle\E	\int_0^tf_\epsilon^\prime(\Vert	u(s)-\psi\Vert_H^2)\Vert u(s)\Vert_V^pds\notag\\&\leq f_\epsilon(\Vert u_0 -\psi\Vert_H^2)+ (2 \lambda+2L_F)\E	\int_0^t f_\epsilon^\prime(\Vert	u(s)-\psi\Vert_H^2)\Vert u(s)\Vert_H^2ds+(S+\mathbf{K})t.\notag\end{align}
\end{itemize}
\end{proof}

\end{document}